\documentclass[a4paper,12pt]{article}
\usepackage{mathrsfs,amsthm,graphicx,color,verbatim,bbm,amsmath,amsfonts,amssymb,newclude,nicefrac,amsfonts,graphicx,enumerate,url,hyperref}
\usepackage[latin1]{inputenc}

\newtheorem{lemma}{Lemma}[section]
\newtheorem{theorem}[lemma]{Theorem}
\newtheorem{prop}[lemma]{Proposition}
\newtheorem{corollary}[lemma]{Corollary}

\theoremstyle{lemma}
\newtheorem*{remark}{Remark}

\textwidth17cm
\textheight22cm
\voffset-1.5cm
\hoffset-1.9cm

\providecommand{\N}{{\ensuremath{\mathbbm{N}}}}
\providecommand{\R}{{\ensuremath{\mathbbm{R}}}}
\providecommand{\E}{{\ensuremath{\mathbb{E}}}}
\renewcommand{\P}{{\ensuremath{\mathbb{P}}}}
\providecommand{\1}{{\ensuremath{\mathbbm{1}}}}

\newcommand{\diffns}[1]{\mathrm{d}#1}

\newcommand{\lpn}[3]{L^{#1}(#2;#3)}
\newcommand{\smallsum}{\textstyle\sum}

\newcommand{\perturb}{\Xi}

\title{Existence, uniqueness, and regularity for stochastic evolution equations with irregular initial values}

\author{Adam Andersson, Arnulf Jentzen, and Ryan Kurniawan}

\begin{document}

\maketitle

\begin{abstract}
In this article we develop a framework for studying parabolic semilinear stochastic evolution equations (SEEs) 
with singularities in the initial condition and singularities at the initial time of the time-dependent coefficients 
of the considered SEE. We use this framework to establish existence, uniqueness, and regularity results for mild 
solutions of parabolic semilinear SEEs with singularities at the initial time. 
We also provide several counterexample 
SEEs that illustrate the optimality of our results.
\end{abstract}

\tableofcontents

\section{Introduction}
\label{sec:intro}

There are a number of existence, uniqueness, and regularity results 
for mild solutions of semilinear stochastic evolution equations (SEEs) 
in the literature;
see, e.g., 
\cite{dz92,
DaPratoZabczyk1996,
b97b,
VanNeervenVeraarWeis2008,
JentzenKloeden2011,
jr12,
KruseLarsson2012,
VanNeervenVeraarWeis2012}
and the references mentioned therein.
In this work we extend the above cited results 
by adding
singularities in the initial condition and
by introducing singularities
at the initial time of the time-dependent coefficients of the considered SEE;
cf., e.g., Chen \& Dalang~\cite{Chen2014,chen2015}. 
To be more specific, in the first main result of this work (see Proposition~\ref{prop:perturbation_estimate} below)
we establish a general perturbation estimate (see~\eqref{eq:intro.perturb} in this introductory section below) for a general class of 
stochastic processes which allows us to derive a priori bounds (see, e.g., \eqref{eq:intro.SEE.regularity} in this introductory section below) 
for solutions and numerical approximations of SEEs with singularities at the initial time. 
This perturbation estimate, in turn, is used to prove the second main result of this article (see Theorem~\ref{thm:existence_uniqueness} below) 
which establishes existence, uniqueness, and regularity properties for solutions of SEEs with singularities at the initial time 
(see~\eqref{eq:intro.SEE} and~\eqref{eq:intro.SEE.regularity} in this introductory section below). 
As an application of our perturbation estimate and this second main result of our article, we reveal a regularity barrier (see~\eqref{eq:regularity_barrier} in this introductory section below) for the initial condition of SEEs under which the considered SEE has unique solutions which are Lipschitz continuous with respect to initial values (see Corollary~\ref{cor:rough_initial} below). 
By means of several counterexamples (see Proposition~\ref{prop:counterexampleI}, Proposition~\ref{prop:counterexampleII}, and Proposition~\ref{prop:counterexample2} below) we also demonstrate that this regularity barrier can in general not essentially be improved (cf. \eqref{eq:intro.Lip} and~\eqref{eq:initial.instability} in this introductory section below). 
We illustrate the above findings in the case of possibly nonlinear stochastic heat equations on an interval such as the continuous version of the parabolic Anderson model on an interval (cf. Corollary~\ref{cor:periodic_boundary}, Proposition~\ref{prop:counterexampleI}, and Proposition~\ref{prop:positive.example} below).
Existence, uniqueness, and regularity results for possibly nonlinear stochastic heat equations on the whole real line with rough initial values, 
that is, signed Borel measures with exponentially growing tails over $\R$ as initial values can be found in Chen \& Dalang~\cite{Chen2014,chen2015} 
(see Theorem~2.4 in Chen \& Dalang~\cite{chen2015} for an existence and uniqueness result and a priori estimates 
and see Theorem~3.1 in Chen \& Dalang~\cite{Chen2014} for a H\"{o}lder regularity result). 
Moreover, Proposition~2.11 in Chen \& Dalang~\cite{chen2015} disproves the existence of a solution of the considered stochastic heat equation 
in the case of a specific rough initial value, that is, the derivative of the Dirac delta measure at zero as the initial value.

To illustrate the results of this article in more details,
we assume the following setting throughout 
this introductory section. 
Let
$
  \left(
    H,
    \left\| \cdot \right\|_H,
    \left< \cdot, \cdot \right>_H
  \right)
$ 
and 
$
  \left(
    U,
    \left\| \cdot \right\|_U ,
    \left< \cdot, \cdot \right>_U
  \right)
$
be nontrivial separable $ \R $-Hilbert spaces,
let
$ T \in (0,\infty) $,
$ \eta \in \R $,
$
  p \in [ 2 , \infty )
$,
$ \alpha \in [ 0, 1 ) $,
$ \hat{ \alpha } \in ( -\infty, 1 ) $, 
$
  \beta \in [ 0, \nicefrac{ 1 }{ 2 } )
$,
$
  \hat{ \beta } \in ( -\infty, \nicefrac{ 1 }{ 2 } )
$,
$ L_0, \hat{L}_0, L_1, \hat{L}_1 \in [0,\infty) $, 
$ \kappa = \1_{(0,\infty)}(L_1) $
satisfy
$
  \kappa \,
  (\alpha + \hat{ \alpha })
  < \nicefrac{ 3 }{ 2 }
$, 
let
$
  ( \Omega , \mathcal{F}, \P, ( \mathcal{F}_t )_{ t \in [0,T] } )
$
be a stochastic basis,
let
$
  ( W_t )_{ t \in [0,T] }
$
be an $\operatorname{Id}_U$-cylindrical $ ( \mathcal{F}_t )_{ t \in [0,T] } $-Wiener process,
let
$
  A \colon D(A)
  \subseteq
  H \rightarrow H
$
be a generator of a strongly continuous analytic semigroup
with 
$
  \operatorname{spectrum}( A )
  \subseteq
  \{
    z \in \mathbb{C}
    \colon
    \operatorname{Re}( z ) < \eta
  \}
$,
let
$
  (
    H_r
    ,
    \left\| \cdot \right\|_{ H_r }
    ,
    \left< \cdot , \cdot \right>_{ H_r }
  )
$,
$ r \in \R $,
be a family of interpolation spaces associated to
$
  \eta - A
$
(cf., e.g., \cite[Section~3.7]{sy02}),
let
$ \mathbf{F} \colon [0,T] \times \Omega \times H \to H_{-\alpha} $ 
be a 
$
(\mathrm{Pred}( ( \mathcal{F}_t )_{ t \in [0,T] } ) \otimes \mathcal{B}( H ))
$/$\mathcal{B}( H_{ - \alpha } )$-measurable mapping, 
let
$ \mathbf{B} \colon [0,T] \times \Omega \times H \to HS( U, H_{ - \beta } ) $ 
be a 
$
(\mathrm{Pred}( ( \mathcal{F}_t )_{ t \in [0,T] } ) 
$
$
\otimes \mathcal{B}( H ))
$/$\mathcal{B}( HS( U, H_{ - \beta } ) )$-measurable mapping, 
assume for all $ t \in (0,T] $, $ X, Y \in \mathcal{L}^p( \P; H ) $
that
\begin{align}
\label{eq:intro.singular.F}
  \|
    \mathbf{F}(t,X) - \mathbf{F}(t,Y)
  \|_{
    L^p( \P; H_{ - \alpha } )
  }
  \leq
  L_0 \,
  \| X - Y \|_{
    L^p( \P; H )
  }
  ,\,\,
&
  \|
    \mathbf{F}( t , 0 )
  \|_{
    L^p(\P;H_{ - \alpha } )
  }
  \leq
    \hat{L}_0
    \,
    t^{ - \hat{\alpha} }
  ,
\\
\label{eq:intro.singular.B}
  \|
    \mathbf{B}(t,X) - \mathbf{B}(t,Y)
  \|_{
    L^p( \P; HS( U, H_{ - \beta } ) )
  }
  \leq
  L_1
  \| X - Y \|_{
    L^p( \P; H )
  } ,\,\,
&
  \| \mathbf{B}(t,0) \|_{
    L^p( \P; HS( U, H_{ - \beta } ) )
  }
\leq
    \hat{L}_1
    \,
    t^{ - \hat{ \beta } }
  ,
\end{align}
for every 
$ a, b \in (-\infty,1) $
let 
$
  \mathrm{E}_{ a, b } \colon [0,\infty) \to [0,\infty)
$
be the function which satisfies for all
$
  x \in [0,\infty)
$
that
$
  \mathrm{E}_{ a, b }[ x ]
  =
  1
  +
  \sum_{ n = 1 }^{ \infty }
  x^n
  \prod_{ k = 0 }^{ n - 1 }
  \int_0^1
  t^{
    -b
  }
  \,
  ( 1 - t )^{
    k(1-b) - a
  }
  \, dt
$
(cf., e.g., \cite[Chapter~7]{h81}), 
for every $ r \in [0,1] $
let 
$ \chi_r \in ( 0, \infty ) $
be the real number given by
$
  \chi_r
  =
  \sup_{ t \in (0,T] }
    t^r
    \,
    \|
      ( \eta - A )^r
      e^{ t A }
    \|_{ L( H ) }
$
(see, e.g.,  \cite[Lemma~11.36]{rr93}),
and 
for every
$ 
  \lambda \in 
  ( - \infty, 
    \frac{
      1
    }{ 2 }
    [
      1 +
      \1_{ \{ 0 \} }( L_1 )
    ] 
  )
$
let 
$
  \Theta_{ \lambda } \in [0,\infty)
$
be the real number given by
\begin{equation}
\label{eq:BigTheta.intro}
\begin{split}
&
  \Theta_{ \lambda }
  =
 2^{ \kappa / 2 }
    \left|
    \mathrm{E}_{
      (1 + \kappa) \lambda
      ,
      \max\{ \alpha, 2 \beta \kappa \}
    }
    \!\left[
      \left|
        \tfrac{
          \chi_\alpha 
          \, 
          L_0 
          \, 
          2^{ \kappa / 2 } 
          \, 
          T^{ ( 1 - \alpha ) }
        }{ 
          ( 1 - \alpha )^{ \kappa / 2 } 
        }
        +
        \chi_\beta \, 
        L_1 
        \sqrt{ 
          p \, (p-1) \, T^{ ( 1 - 2 \beta ) } 
        }
      \right|^{ ( 1 + \kappa ) }
    \right]
    \right|^{ ( 2 - \kappa ) / 2 }
  .
\end{split}
\end{equation}
In displays~\eqref{eq:intro.perturb}--\eqref{eq:initial.instability} 
below we illustrate the above framework through several examples
and applications.

Our first result is a suitable \emph{perturbation estimate} 
for predictable stochastic processes. 
More formally, in Proposition~\ref{prop:perturbation_estimate} 
below we prove for all 
$ \delta \in \R $, 
$
  \lambda \in 
  ( - \infty, 
    \frac{
      1
    }{ 2 }
    [
      1 +
      \1_{ \{ 0 \} }( L_1 )
    ] 
  )
$ 
and all $ ( \mathcal{F}_t )_{ t \in [0,T] } $-predictable
stochastic processes 
$ Y^1 , Y^2 \colon [0,T] \times \Omega \to H_{\delta} $ 
with 
\begin{equation}
\label{eq:perturb.condition}
   \cup_{ k \in \{1,2\} }
   Y^k( (0,T] \times \Omega )
   \subseteq H
   \quad
   \text{and}
   \quad
   \limsup_{ 
     r 
     \nearrow  
     \frac{
       1
     }{ 2 }
     [
       1 +
       \1_{ \{ 0 \} }( L_1 )
     ]  
   }
   \max_{ k \in \{1,2\} }
   \sup_{ t \in ( 0 , T ] }
   t^{ r }
   \,
   \|
     Y_t^k
   \|_{
     L^p ( \P ; H )
   }
   < \infty
\end{equation}
that 
$
  \forall \, t \in [0,T] \colon
  \P
  \big(
  \sum^2_{ k=1 }
$
$
  \int_0^t
  \|
    e^{  (t - s) A}
    \mathbf{F}( s , Y^k_s )
  \|_H
  +
  \|
    e^{  (t - s) A}
    \mathbf{B}( s , Y^k_s )
  \|_{ HS( U, H ) }^2
  \,
  \diffns s
  < \infty
  \big)
  =1
$ 
and 
\begin{multline}
\label{eq:intro.perturb}
  \sup_{
    t \in (0,T]
  }
  \left[
  t^{ \lambda }
  \,
  \|
    Y_t^1 - Y_t^2
  \|_{
    L^p ( \P ; H )
  }
  \right]
\leq
  \sup_{ t \in ( 0 , T ] }
  \bigg[
  t^{ \lambda }
  \,
  \bigg\|
    Y_t^1
    -
    \smallint_0^t
      e^{(t-s)A}
      \mathbf{F}(s,Y_s^1)
    \, \diffns s
    -
    \smallint_0^t
      e^{(t-s)A}
      \mathbf{B}(s,Y_s^1)
    \, \diffns W_s
\\
    +
    \smallint_0^t
      e^{(t-s)A}
      \mathbf{F}(s,Y_s^2)
    \, \diffns s
    +
    \smallint_0^t
      e^{(t-s)A}
      \mathbf{B}(s,Y_s^2)
    \, \diffns W_s
    -
    Y_t^2
  \bigg\|_{
    L^p ( \P ; H )
  }
  \bigg]
  \,
  \Theta_{ \lambda }
  .
\end{multline}
We note that the right hand side of \eqref{eq:intro.perturb}
might be infinite.
Moreover,
we would like to emphasize that $ Y^1 $ and $ Y^2 $ 
in \eqref{eq:intro.perturb}
are arbitrary $ ( \mathcal{F}_t )_{ t \in [0,T] } $-predictable
stochastic processes which satisfy~\eqref{eq:perturb.condition}
and, in particular, we emphasize that $ Y^1 $ and $ Y^2 $ 
do not need to be solution processes of some SEEs.
Estimate~\eqref{eq:intro.perturb} follows from an appropriate application 
of a generalized Gronwall-type inequality 
(see the proof of Proposition~\ref{prop:perturbation_estimate} below for details).

We use inequality~\eqref{eq:intro.perturb} to establish an existence, 
uniqueness, and regularity result for SEEs 
with singularities at the initial time. 
More precisely, in Theorem~\ref{thm:existence_uniqueness} below 
we prove that for all 
$
  \delta
  \in
  \big(
    -\infty
    ,
    \frac{
      1
    }{ 2 }
    [
      1 +
      \1_{ \{ 0 \} }( L_1 )
    ]
  \big)
$,
$
  \lambda \in
      \big[\!
      \max\{
        \delta ,
        \alpha + \hat{ \alpha } - 1 ,
        \beta + \hat{ \beta } - \nicefrac{ 1 }{ 2 }
      \}
    ,
    \frac{
      1
    }{ 2 }
    [
      1 +
      \1_{ \{ 0 \} }( L_1 )
    ]
  \big)
$, 
$ \xi \in \mathcal{L}^p( \P|_{\mathcal{F}_0}; H_{-\max\{\delta,0\}} ) $
with  
$
  \sup_{ t\in (0,T] }
  t^{ \delta}\,
  \| e^{tA} \xi \|_{ \lpn{p}{\P}{H} }
$
$
  < \infty
$ 
it holds 
(i) that there exists an up-to-modifications unique 
$ (\mathcal{F}_t)_{ t \in [0,T] } $-predictable stochastic process 
$ X \colon [0,T] \times \Omega \to H_{-\max\{\delta,0\}} $
which satisfies for all $ t \in [0,T] $
that 
$ X( (0,T] \times \Omega ) \subseteq H $, 
that 
$
  \sup_{ s \in (0,T] }
  s^\lambda\,
  \|X_s\|_{ \lpn{p}{\P}{H} }
  < \infty
$, 
that 
$
  \P\big(
  \int_0^t
  \|
    e^{ (t - s) A}
    \mathbf{F}( s , X_s )
  \|_H
  +
  \|
    e^{ (t - s) A}
    \mathbf{B}( s , X_s )
  \|_{ HS( U, H ) }^2
  \,
  \diffns s
  < \infty
  \big)
  =1
$, 
and $ \P $-a.s.\ that
\begin{equation}
\label{eq:intro.SEE}
  X_t
  =
  e^{t A } \xi
  +
  \int_0^t
    e^{  (t - s) A}
    \mathbf{F}( s , X_s )
    \,
  \diffns s
  +
  \int_0^t
    e^{  (t - s) A}
    \mathbf{B}( s , X_s )
    \,
  \diffns W_s
\end{equation}
and (ii) that
\begin{equation}
\label{eq:intro.SEE.regularity}
\begin{split}
&
  \sup_{ t \in (0,T] }
  \left[
    t^{ \lambda }
    \left\|
      X_t
    \right\|_{
      L^p( \P; H )
    }
  \right]
  \leq
  T^\lambda\,
  \Theta_\lambda
\\&\quad\cdot
  \bigg[
  \tfrac{
      \sup_{ t \in (0,T] }
      (
      t^\delta
      \|
        e^{tA}
        \xi
      \|_{
        L^p( \P; H )
      }
      )
  }{
    T^\delta
  }
+
    \tfrac{
      \chi_{ \alpha }
      \,
      \hat{L}_0
      \,
      \mathbbm{B}(
        1 - \alpha
        ,
        1 - \hat{\alpha}
      )
    }{
      T^{
        \left(
          \alpha + \hat{\alpha} - 1
        \right)
      }
    }
    +
    \tfrac{
      \chi_{ \beta }
      \,
      \hat{L}_1
      \left|
        p \, ( p - 1 )
        \,
        \mathbbm{B}(
          1 - 2 \beta
          ,
          1 - 2 \hat{\beta}
        )
      \right|^{ \nicefrac{ 1 }{ 2 } }
    }{
      \sqrt{2}\,
      T^{
        (
          \beta + \hat{ \beta } - 1/2
        )
      }
    }
  \bigg]
  < \infty .
\end{split}
\end{equation}
We would like to point out that inequality~\eqref{eq:intro.SEE.regularity} under the generality of~\eqref{eq:intro.singular.F} and~\eqref{eq:intro.singular.B} 
is a crucial ingredient to establish essentially sharp weak convergence rates for numerical approximations of SEEs with possibly smooth initial values (see the last paragraph in this introductory section for more details).

Inequality~\eqref{eq:intro.SEE.regularity} follows
from the perturbation estimate~\eqref{eq:intro.perturb}
(with $ Y^1 = X $ and $ Y^2 = 0 $ in the notation
of \eqref{eq:intro.perturb}).
We now illustrate Theorem~\ref{thm:existence_uniqueness} 
and \eqref{eq:intro.SEE}--\eqref{eq:intro.SEE.regularity}, 
respectively, by some examples.
In particular, in Corollary~\ref{cor:rough_initial} below we prove 
by an 
application of Theorem~\ref{thm:existence_uniqueness} that for all 
$ F \in \operatorname{Lip}( H , H_{ - \alpha } ) $,
$ B \in \operatorname{Lip}( H , HS( U , H_{ - \beta } ) ) $, 
$
  \hat{ \delta }
  =
  \frac{
    1
  }{ 2 }
  \big[
    1 +
    \mathbbm{1}_{ 
      \{ 0 \} 
    }(
      | B |_{
        \operatorname{Lip}( H, HS( U, H_{ - \beta } ) )
      }
    )
  \big]
$ 
it holds 
(i) 
that there exist up-to-modifications unique 
$ ( \mathcal{F}_t )_{ t \in [0,T] } $-predictable stochastic processes
$
  X^x
  \colon
  [ 0 , T ] \times \Omega
  \to
  H_{ - \delta }
$,
$
  x
  \in
  H_{ - \delta }
$,
$
  \delta \in [ 0 , \hat{ \delta } )
$,
which fulfill
for all
$ q \in [2,\infty) $,
$
  \delta
  \in [ 0 , \hat{\delta} )
$,
$
  x \in H_{ - \delta } 
$,
$ t \in [0,T] $
that
$
  X^x
  ( ( 0 , T ] \times \Omega ) \subseteq H
$,
that
$
  \sup_{ s \in (0,T] }
  s^{
    \delta
  }
  \,
  \|
    X_s^x
  \|_{ L^q( \P ; H ) }
  < \infty
$,
and $ \P $-a.s.\ that
\begin{equation}
\label{eq:intro.SEE.initial}
\begin{split}
&
  X_t^x
  =
  e^{ t A } x
  +
  \int_0^t
    e^{ ( t - s ) A }
      F( X_s^x )
    \, \diffns s
  +
  \int_0^t
    e^{ ( t - s ) A }
      B( X_s^x )
  \, \diffns W_s
\end{split}
\end{equation}
%
%
%
%
%
and (ii) that 
\begin{equation}
\label{eq:intro.SEE.initial.regularity}
\begin{split}
&
  \forall 
  \, 
  \delta \in [ 0 , \hat{ \delta } ) ,
  \, 
  q \in [ 2 , \infty )
  \colon
  \sup_{
    \substack{
      x, y \in H_{ - \delta } ,
    \\
      x \neq y
    }
  }
  \sup_{ t \in (0,T] }
  \max\!
  \left\{
  \frac{
    t^{ \delta }
    \left\|
      X_t^x
    \right\|_{
      L^q( \P; H )
    }
  }{
    \max\{ 1, \| x \|_{ H_{ - \delta } } \}
  }
  ,
    \frac{
      t^{ \delta }
      \,
      \|
        X_t^x - X_t^y
      \|_{
        L^q( \P ; H )
      }
    }{
      \|
      x - y
      \|_{
        H_{ - \delta } 
      }
    }
  \right\}  
  < \infty
  .
\end{split}
\end{equation}
Here and below 
we denote for 
$ \R $-Banach spaces
$
  ( V, \left\| \cdot \right\|_V ) 
$
and 
$
  ( W, \left\| \cdot \right\|_W ) 
$
by
$ \operatorname{Lip}( V, W ) $
the set of all Lipschitz continuous functions from $ V $ to $ W $
and 
we denote for 
$ \R $-Banach spaces
$
  ( V, \left\| \cdot \right\|_V ) 
$
and 
$
  ( W, \left\| \cdot \right\|_W ) 
$
and a function 
$ f \in \operatorname{Lip}( V, W ) $
by
$
  \left| f \right|_{
    \operatorname{Lip}( V, W )
  } 
  \in [0,\infty)
$
the Lipschitz semi-norm associated to $f$
(see~\eqref{eq:Lip.def} in Subsection~\ref{sec:notation} below for details).
The finiteness of the second element in the set in the maximum 
in \eqref{eq:intro.SEE.initial.regularity}
follows from the perturbation estimate~\eqref{eq:intro.perturb}
(with $ Y^1 = X^x $ and $ Y^2 = X^y $
for $ x, y \in H_{ - \delta } $, $ \delta \in [0,\hat{\delta}) $
in the notation of \eqref{eq:intro.perturb})
and the finiteness of the first element in the set in the maximum
in \eqref{eq:intro.SEE.initial.regularity}
is a consequence from \eqref{eq:intro.SEE.regularity}, which, in turn,
also follows from the perturbation estimate~\eqref{eq:intro.perturb}
(see above and the proof 
of Corollary~\ref{cor:rough_initial}
for details).
Roughly speaking, 
Corollary~\ref{cor:rough_initial} 
establishes the existence of 
mild solutions of the SEE~\eqref{eq:intro.SEE.initial}
and also establishes 
the Lipschitz continuity of the solutions with respect to the initial conditions 
for any initial condition in $ H_{ - \delta } $
and any 
$
  \delta < 
  \hat{ \delta } = 
  \frac{
    1
  }{ 2 }
  \big[
    1 +
    \mathbbm{1}_{ 
      \{ 0 \} 
    }(
      | B |_{
        \operatorname{Lip}( H, HS( U, H_{ - \beta } ) )
      }
    )
  \big]
$
(see \eqref{eq:intro.SEE.initial.regularity}).
In Corollary~\ref{cor:periodic_boundary},
Proposition~\ref{prop:counterexampleI},
Proposition~\ref{prop:counterexampleII},
and Proposition~\ref{prop:counterexample2}
below we demonstrate that
the \emph{regularity barrier}
\begin{equation}
\label{eq:regularity_barrier}
  \hat{ \delta } = 
  \tfrac{
    1
  }{ 2 }
  \big[
    1 +
    \mathbbm{1}_{ 
      \{ 0 \} 
    }(
      | B |_{
        \operatorname{Lip}( H, HS( U, H_{ - \beta } ) )
      }
    )
  \big]
  =
  \begin{cases}
    \nicefrac{ 1 }{ 2 }
  &
    \colon
    B \text{ is not a constant function}
  \\
    1
  &
    \colon
    B \text{ is a constant function}
  \end{cases}
\end{equation}
for the regularity of the initial conditions 
revealed in Corollary~\ref{cor:rough_initial}
(and 
Proposition~\ref{prop:perturbation_estimate}
and 
Theorem~\ref{thm:existence_uniqueness},
respectively)
can, in general, not essentially be improved.
In particular,
Corollary~\ref{cor:periodic_boundary} and~Proposition~\ref{prop:counterexampleI} below prove
in the case where 
$ H = U = L^2( (0,1) ; \R ) $,
where 
$ \beta \in ( \nicefrac{1}{4} , \nicefrac{ 1 }{ 2 } ) $,
where 
$ A \colon D(A) \subseteq H \rightarrow H $ 
is the Laplacian with periodic boundary conditions on $ H $,
and where
$
  B \in L( H , HS( H, H_{ - \beta } ) )
$
satisfies 
$ 
  \forall \, 
  u, v \in H  
  \colon
  B( v ) u 
  = v \cdot u
$
($ B $ is not a constant function)
that it holds 
(i) that there exist up-to-modifications 
unique 
$ ( \mathcal{F}_t )_{ t \in [0,T] } $-predictable
stochastic processes
$
  X^x \colon [0,T] \times \Omega \to H_{ - \delta }
$,
$ x \in H_{ - \delta } $,
$ \delta \in [0, \nicefrac{ 1 }{ 2 } ) $,
which fulfill for all 
$ q \in [2,\infty) $,
$
  \delta
  \in [ 0 , \nicefrac{1}{2} )
$,
$
  x \in H_{ - \delta } 
$, 
$ t \in [0,T] $
that
$
  X^x
  ( ( 0 , T ] \times \Omega ) \subseteq H
$,
that
$
  \sup_{ s \in (0,T] }
  s^\delta
  \,
  \|
    X_s^x
  \|_{ L^q( \P ; H ) }
  < \infty
$, 
and $ \P $-a.s.\ that 
\begin{equation}
\label{eq:SPDE_counterexample1_intro}
\begin{split}
  X^x_t
  =
  e^{ tA } x
  +
  \int^t_0 
  e^{ (t-s)A }
  B(X^x_s)
  \,\diffns{W_s}
  ,
\end{split}
\end{equation}
(ii) that 
\begin{equation}
\label{eq:intro.Lip}
\begin{split}
&
  \forall 
  \, 
  \delta \in [0,\nicefrac{1}{2}) 
  ,
  \, 
  q \in [2,\infty),
  \,
  t \in (0,T]
  \colon
  \sup_{
    \substack{
      x, y \in H ,
    \\
      x \neq y
    }
  }
  \left[
    \frac{
      \|
        X_t^x - X_t^y
      \|_{
        L^q( \P ; H )
      }
    }{
      \|
      x - y
      \|_{
        H_{ - \delta } 
      }
    }
  \right]
  < \infty
  ,
\end{split}
\end{equation}
and (iii) that 
\begin{equation}
\label{eq:initial.instability}
  \forall 
  \, \delta \in (\nicefrac{1}{2},\infty) ,
  \, q \in [2,\infty) ,
  \,
  t \in (0,T]
  \colon
  \sup_{
    \substack{
      x, y \in H ,
    \\
      x \neq y
    }
  }
  \left[
    \frac{
      \|
        X_t^x - X_t^y
      \|_{
        L^q( \P ; H )
      }
    }{
      \|
      x - y
      \|_{
        H_{ - \delta } 
      }
    }
  \right]
  = \infty
  .
\end{equation}
The SEE~\eqref{eq:SPDE_counterexample1_intro} is sometimes referred to as
a continuous version of the \emph{parabolic Anderson model}
in the literature (see, e.g., Carmona \& Molchanov~\cite{CarmonaMolchanov1994}).
In addition, 
Proposition~\ref{prop:counterexampleI} below
\emph{disproves} 
the existence of square integrable solutions 
of the SEE~\eqref{eq:SPDE_counterexample1_intro}
with initial conditions in 
$ ( \cup_{ \delta \in \R } H_{ \delta } ) \backslash H_{ - 1 / 2 } $.
The noise in the counterexample SEE~\eqref{eq:SPDE_counterexample1_intro}
is spatially very rough and one might question whether the regularity barrier~\eqref{eq:regularity_barrier}
can be overcome in the case of more regular spatially smooth noise.
In Proposition~\ref{prop:counterexampleII} below
we answer this question to the negative by presenting another counterexample
SEE with a non-constant diffusion coefficient 
but a spatially smooth noise for which we 
disprove the existence of square integrable solutions 
with initial conditions in 
$ ( \cup_{ \delta \in \R } H_{ \delta } ) \backslash H_{ - 1 / 2 } $
(cf., however, also Proposition~\ref{prop:positive.example} below).
Proposition~\ref{prop:counterexample2} 
below also provides a further counterexample SEE
which illustrates the sharpness of the 
regularity barrier~\eqref{eq:regularity_barrier}
in the case where $ B $ is a constant function.

Proposition~\ref{prop:perturbation_estimate}, 
Theorem~\ref{thm:existence_uniqueness}, 
and
Corollary~\ref{cor:rough_initial} outlined above
(see \eqref{eq:intro.perturb}--\eqref{eq:intro.SEE.initial.regularity})
are of particular importance for establishing regularity properties
for Kolmogorov backward equations associated to parabolic semilinear SEEs
and, thereby, for establishing essentially sharp probabilistically \emph{weak convergence rates} for 
numerical approximations of parabolic semilinear SEEs
(cf., e.g., 
Lemmas~4.4--4.6 in Debussche~\cite{Debussche2011}, 
Lemma~3.3 in Wang \& Gan~\cite{WangGan2013_Weak_convergence}, 
(4.2)--(4.3) in Andersson \& Larsson~\cite{AnderssonLarsson2015},
Propositions~5.1--5.2 and Lemma~5.4 in Br\'{e}hier~\cite{Brehier2014},
Lemma~3.3 in Wang~\cite{Wang2016481},
(79) in Conus et al.~\cite{ConusJentzenKurniawan2014arXiv}, 
Proposition~7.1, Lemma~10.5, and Lemma~10.10 in Kopec~\cite{Kopec2014_PhD_Thesis},
and
(183)--(184) in Jentzen \& Kurniawan~\cite{JentzenKurniawan2015arXiv}).
The analytically weak norm for the initial condition in \eqref{eq:intro.SEE.initial.regularity} 
as well as the singularities in the nonlinear coefficients of the SEE in~\eqref{eq:intro.singular.F} and~\eqref{eq:intro.singular.B} 
above translate in an analytically weak norm for the approximation errors in the probabilistically weak error analysis
which, in turn, results in essentially sharp probabilistically weak convergence rates
(cf., e.g., 
Theorem~2.2 in Debussche~\cite{Debussche2011},
Theorem~2.1 in Wang \& Gan~\cite{WangGan2013_Weak_convergence}, 
Theorem~1.1 in Andersson \& Larsson~\cite{AnderssonLarsson2015},
Theorem~1.1 in Br\'{e}hier~\cite{Brehier2014},
Theorem~5.1 in Br\'{e}hier \& Kopec~\cite{BrehierKopec2016},
Corollary~1 in Wang~\cite{Wang2016481},
Corollary~5.2 in Conus et al.~\cite{ConusJentzenKurniawan2014arXiv}, 
Theorem~6.1 in Kopec~\cite{Kopec2014_PhD_Thesis},
and
Corollary~8.2 in~\cite{JentzenKurniawan2015arXiv}).
The perturbation inequality in Proposition~\ref{prop:perturbation_estimate}
(see \eqref{eq:intro.perturb} above) 
is also useful 
to establish essentially sharp probabilistically \emph{strong convergence rates} for numerical approximations
and perturbations of SEEs
(cf., e.g., 
Proposition~4.1 in Conus et al.~\cite{ConusJentzenKurniawan2014arXiv}
and 
Proposition~4.3 in \cite{JentzenKurniawan2015arXiv}).

\subsection{Notation}
\label{sec:notation}

Throughout this article the following notation is used.
For 
two measurable spaces
$
  ( A, \mathcal{A} )
$
and
$
  ( B, \mathcal{B} )
$
we denote by
$
  \mathcal{M}( \mathcal{A}, \mathcal{B} )
$
the set of all 
$ \mathcal{A} $/$ \mathcal{B} $-measurable
functions.
For a set $ A $ 
we denote by 
$ \mathcal{P}(A) $ the power set of $ A $
and we denote by 
$ \#_A \colon \mathcal{P}(A) \to [0,\infty] $ 
the counting measure on $ A $.
For a Borel measurable set $ A \in \mathcal{B}( \R ) $
we denote by $ \mu_A \colon \mathcal{B}( A ) \to [0,\infty] $
the Lebesgue-Borel measure on $ A $.
For a real number $ T \in (0,\infty) $ 
and a probability space 
$ (\Omega,\mathcal{F},\P) $ 
with a normal filtration 
$ (\mathcal{F}_t)_{ t \in [0,T] } $
(see, e.g., Definition~2.1.11 in~\cite{PrevotRoeckner2007}) 
we call the quadruple 
$
  ( \Omega , \mathcal{F}, \P, ( \mathcal{F}_t )_{ t \in [0,T] } )
$ 
a stochastic basis. 
For a real number $ T \in (0,\infty) $
and a filtered probability space
$
  ( \Omega , \mathcal{F}, \P, ( \mathcal{F}_t )_{ t \in [0,T] } )
$
we denote by
$
  \mathrm{Pred}( ( \mathcal{F}_t )_{ t \in [0,T] } )
$
the sigma-algebra given by 
\begin{equation}
  \mathrm{Pred}( ( \mathcal{F}_t )_{ t \in [0,T] } )
  =
  \sigma_{ [0,T] \times \Omega }\big(
    \big\{
      ( s, t ] \times A
      \colon
      s, t \in [0,T] , s < t,
      A \in \mathcal{F}_s
    \big\}
    \cup
    \big\{
      \{ 0 \} \times A
      \colon
      A \in \mathcal{F}_0
    \big\}
  \big)
\end{equation}
(the predictable sigma-algebra associated
to
$
  ( \mathcal{F}_t )_{ t \in [0,T] }
$).
We denote by 
$ \lceil \cdot \rceil_h \colon \R \to \R $,
$ h \in (0,\infty) $,
the functions which satisfy for all
$ h \in (0,\infty) $, $ t \in \R $
that 
$
  \lceil t \rceil_h =
  \min(
    [ t, \infty )
    \cap
    \{ 0, h , - h , 
$
$
    2 h , - 2 h , \dots \}
  )
$. 
For $ \R $-Banach spaces
$ ( V , \left\| \cdot \right\|_V ) $
and
$ ( W , \left\| \cdot \right\|_W ) $
we denote by
$
  \left| \cdot \right|_{
    \operatorname{Lip}( V, W )
  }
  \colon 
  \mathcal{C}( V, W )
  \to [0,\infty]
$
and 
$
  \left\| \cdot \right\|_{
    \operatorname{Lip}( V, W )
  }
  \colon 
  \mathcal{C}( V, W )
  \to [0,\infty]
$
the functions which satisfy
for all $ f \in \mathcal{C}( V, W ) $
that
\begin{equation}
\label{eq:Lip.def}
\begin{split}
  \left| f \right|_{ 
    \operatorname{Lip}( V, W )
  }
  &=
  \sup\!\left(\left\{
    \frac{
      \left\| f( x ) - f( y ) \right\|_W
    }{
      \left\| x - y \right\|_V
    }
    \colon
    x, y \in V, \, x \neq y
  \right\}
  \cup \{0\}
  \right)
  ,
\\
  \left\| f \right\|_{
    \operatorname{Lip}( V, W )
  }
  &
  =
  \|f(0)\|_W
  +
  \left| f \right|_{ \operatorname{Lip}(V,W) }
\end{split}
\end{equation}
and we denote by 
$ \operatorname{Lip}(V,W) $ 
the set given by 
$
  \operatorname{Lip}(V,W)
  =
  \{ f \in \mathcal{C}(V,W) \colon |f|_{ \operatorname{Lip}(V,W) } < \infty \}
$.
For a separable $ \R $-Hilbert space
$ ( H , \left\| \cdot \right\|_H , \left< \cdot , \cdot \right>_H ) $,
real numbers 
$ T \in (0,\infty) $, 
$ \eta \in \R $, 
$ r \in [0,\infty) $, 
$ s \in [0,1] $, 
and a generator of a strongly continuous analytic semigroup
$
  A \colon D(A)
  \subseteq
  H \rightarrow H
$
with 
$
  \operatorname{spectrum}( A )
  \subseteq
  \{
    z \in \mathbb{C}
    \colon
    \text{Re}( z ) < \eta
  \}
$
we denote by
$
  \chi^{ r, T }_{
    A, \eta
  }, 
  \kappa^{ s, T }_{
    A, \eta
  }  
  \in [0,\infty)
$
the real numbers given by
$
  \chi^{ r, T }_{ A, \eta }
  =
  \sup_{ t \in (0,T] }
    t^r
    \,
    \|
      ( \eta - A )^r
      e^{ t A }
    \|_{ L( H ) }
$
(cf., e.g., \eqref{eq:BigTheta.intro} in Section~\ref{sec:intro} above)
and 
$
  \kappa^{ s, T }_{ A, \eta }
  =
  \sup_{ t \in (0,T] }
      t^{-s}
      \,
      \|
        ( \eta - A )^{ - s }
        ( e^{ t A } - \operatorname{Id}_H )
      \|_{ L( H ) }
$
(cf., e.g., \cite[Lemma~11.36]{rr93})
.
We denote by 
$
  \mathbbm{B} \colon (0,\infty)^2 \to (0,\infty)
$
the function with the property
that for all $ x, y \in (0,\infty) $ it holds that
$
  \mathbbm{B}( x, y ) = \int_0^1 t^{ (x - 1) } \left( 1 - t \right)^{ (y - 1) }
  \diffns t
$ 
(Beta function).
We denote by 
$
  \mathrm{E}_{ \alpha, \beta } \colon [0,\infty) \to [0,\infty)
$,
$ \alpha, \beta \in (-\infty,1) $,
the functions which satisfy for all
$
  \alpha, \beta \in (-\infty,1)
$,
$
  x \in [0,\infty)
$
that
\begin{equation}
  \mathrm{E}_{ \alpha, \beta }[ x ]
  =
  1
  +
  \smallsum_{ n = 1 }^{ \infty }
  x^n
  \prod_{ k = 0 }^{ n - 1 }
  \mathbb{B}\big(
    1-\beta
    ,
    k(1-\beta) + 1-\alpha
  \big)
\end{equation}
(generalized exponential function; cf.\ Lemma~7.1.1 in Chapter~7 in Henry~\cite{h81}, 
(1.0.3) in Chapter~1 in Gorenflo et al.~\cite{Gorenfloetal2014}, 
and Lemma~\ref{lem:gronwall}
below). 
For real numbers
$ T \in (0,\infty) $, 
$ \eta \in \R $, 
$ p \in [1,\infty) $, 
$ a, \lambda \in (-\infty,1) $,
$ b \in (-\infty, \frac{ 1 }{ 2 } ) $, 
a separable $ \R $-Hilbert space
$ 
  ( H , \left\| \cdot \right\|_H , \left< \cdot , \cdot \right>_H ) 
$, 
and a generator
$ A \colon D(A) \subseteq H \to H $
of a strongly continuous analytic semigroup with
$
  \operatorname{spectrum}( A )
  \subseteq
  \{
    z \in \mathbb{C}
    \colon
    \text{Re}( z ) < \eta
  \}
$
we denote by 
$
  \Theta_{ A, \eta, p, T }^{ a, b, \lambda }
  \colon
  [0,\infty)^2
  \to
  [0,\infty]
$
the function which satisfies
for all
$ L, \hat{L} \in [0,\infty) $
that
{\small
\begin{equation}
\label{eq:BigTheta}
\begin{split}
&
  \Theta_{ A , \eta, p, T }^{ a, b, \lambda }( L, \hat{L} )
  =
\\ &
\begin{cases}
  \sqrt{2}
  \,
  \bigg|
    E_{
      2 \lambda, \max\{ a, 2 b \}
    }\bigg[
      \Big|
        \tfrac{
        \chi_{ A , \eta }^{ a , T }\,
        L\,
          \sqrt{2}\,
          T^{ (1 - a) }
        }
        {
          \sqrt{1-a}
        }
        +
        \chi_{A,\eta}^{b,T}\,
        \hat{L}\,
        \sqrt{ p \, ( p - 1 ) \, T^{ (1 - 2b) } }
      \Big|^2
    \bigg]
  \bigg|^{1/2}
&
  \colon
  ( \lambda, \hat{L} )
  \in ( -\infty, \frac{ 1 }{ 2 } ) \times (0,\infty)
\\[1ex]
  E_{\lambda,a}\!\left[
    \chi_{ A , \eta }^{ a , T }\,
    L\,
    T^{ (1 - a) }
  \right]
&
  \colon
  \hat{L} = 0
\\[1ex]
  \infty
&
  \colon
  \text{otherwise}
\end{cases}
  .
\end{split}
\end{equation}
}For a measure space $ ( \Omega , \mathcal{F}, \mu ) $,
a measurable space $ ( S , \mathcal{S} ) $,
and an $ \mathcal{F} $/$ \mathcal{S} $-measurable function
$ f \colon \Omega \to S $
we denote by
$
   \left[ f \right]_{
     \mu, \mathcal{S}
   }
$
the set given by
\begin{equation}
   \left[ f \right]_{
     \mu, \mathcal{S}
   }
   =
   \left\{
     g \in \mathcal{M}( \mathcal{F}, \mathcal{S} )
     \colon
     (
     \exists \, A \in \mathcal{F} \colon
     \mu(A) = 0 
     \text{ and }
     \{ \omega \in \Omega \colon f(\omega) \neq g(\omega) \}
     \subseteq A
     )
   \right\}
   .
\end{equation}
For a measure space $ ( \Omega , \mathcal{F}, \mu ) $ 
and a measurable space $ ( S , \mathcal{S} ) $
we do as usual often not distinguish between
an $ \mathcal{F} $/$ \mathcal{S} $-measurable function 
$ f \colon \Omega \to S $ 
and its equivalence class $ \left[ f \right]_{ \mu, \mathcal{S} } $.

\section{Stochastic evolution equations (SEEs) with singularities at the initial time}
\label{sec:SEE_rough_initial_values}

\subsection{Setting}
\label{sec:SPDE_setting}

Throughout this section the following setting is frequently used.
Let
$
  \left(
    H,
    \left\| \cdot \right\|_H,
    \left< \cdot, \cdot \right>_H
  \right)
$ 
and 
$
  \left(
    U,
    \left\| \cdot \right\|_U ,
    \left< \cdot, \cdot \right>_U
  \right)
$
be separable $ \R $-Hilbert spaces
with $ \#_H(H) > 1 $,
let
$ T \in (0,\infty) $,
$ \eta \in \R $,
$
  p \in [ 2 , \infty )
$,
$ \alpha \in [ 0, 1 ) $,
$ \hat{ \alpha } \in ( -\infty, 1 ) $, 
$
  \beta \in [ 0, \nicefrac{ 1 }{ 2 } )
$,
$
  \hat{ \beta } \in ( -\infty, \nicefrac{ 1 }{ 2 } )
$,
$ L_0, \hat{L}_0, L_1, \hat{L}_1 \in [0,\infty) $
satisfy
$
  \mathbbm{1}_{ (0,\infty) }( L_1 )
  \cdot
  \left[ \alpha + \hat{ \alpha }
  \right]
  < \nicefrac{ 3 }{ 2 }
$, 
let
$
  ( \Omega , \mathcal{F}, \P, ( \mathcal{F}_t )_{ t \in [0,T] } )
$
be a stochastic basis,
let
$
  ( W_t )_{ t \in [0,T] }
$
be an $\operatorname{Id}_U$-cylindrical $ ( \mathcal{F}_t )_{ t \in [0,T] } $-Wiener process,
let
$
  A \colon D(A)
  \subseteq
  H \rightarrow H
$
be a generator of a strongly continuous analytic semigroup
with 
$
  \operatorname{spectrum}( A )
  \subseteq
  \{
    z \in \mathbb{C}
    \colon
    \operatorname{Re}( z ) < \eta
  \}
$,
let
$
  (
    H_r
    ,
    \left\| \cdot \right\|_{ H_r }
    ,
    \left< \cdot , \cdot \right>_{ H_r }
  )
$,
$ r \in \R $,
be a family of interpolation spaces associated to
$
  \eta - A
$,
and let
$
  \mathbf{F} \in
  \mathcal{M}\big(
    \mathrm{Pred}( ( \mathcal{F}_t )_{ t \in [0,T] } ) \otimes \mathcal{B}( H )
    ,
    \mathcal{B}( H_{ - \alpha } )
  \big)
$ 
and 
$
  \mathbf{B} \in
  \mathcal{M}\big(
    \mathrm{Pred}(
      ( \mathcal{F}_t )_{ t \in [0,T] }
    )
    \otimes
    \mathcal{B}( H )
    ,
    \mathcal{B}(
      HS( U, H_{ - \beta } )
    )
  \big)
$
satisfy 
for all $ t \in (0,T] $, $ X, Y \in \mathcal{L}^p( \P; H ) $
that
\begin{align}
\label{eq:F_Lipschitz}
  \|
    \mathbf{F}(t,X) - \mathbf{F}(t,Y)
  \|_{
    L^p( \P; H_{ - \alpha } )
  }
  \leq
  L_0 \,
  \| X - Y \|_{
    L^p( \P; H )
  }
  ,\,\,
&
  \|
    \mathbf{F}( t , 0 )
  \|_{
    L^p(\P;H_{ - \alpha } )
  }
  \leq
    \hat{L}_0
    \,
    t^{ - \hat{\alpha} }
  ,
\\
\label{eq:G_Lipschitz}
  \|
    \mathbf{B}(t,X) - \mathbf{B}(t,Y)
  \|_{
    L^p( \P; HS( U, H_{ - \beta } ) )
  }
  \leq
  L_1
  \| X - Y \|_{
    L^p( \P; H )
  },\,\,
&
  \| \mathbf{B}(t,0) \|_{
    L^p( \P; HS( U, H_{ - \beta } ) )
  }
\leq
    \hat{L}_1
    \,
    t^{ - \hat{ \beta } }
  .
\end{align}

\subsection{Predictable stochastic processes with singularities at the initial time}
\label{sec:initial_singularity}
The next result, Lemma~\ref{lem:predictability}, is an elementary lemma that slightly
generalizes Proposition~3.6 (ii) in Da Prato \& Zabczyk~\cite{dz92}.
\begin{lemma}[Existence of predictable modifications]
\label{lem:predictability}
Let
$ T \in [0,\infty) $,
let
$ ( \Omega, \mathcal{F}, \P, ( \mathcal{F}_t )_{ t \in [0,T] } ) $
be a stochastic basis,
let $ ( E, d_E ) $ be a complete and separable metric space,
and let
$ Y \colon [0,T] \times \Omega \to E $
be an $ ( \mathcal{F}_t )_{ t \in [0,T] } $-adapted stochastic process
which satisfies for all 
$
  t \in (0,\infty) \cap (-\infty,T]
$
that 
$
  \limsup_{ [0,T] \ni s \to t } \E\big[ \min\{ 1, d_E( Y_s , Y_t ) \} \big] = 0
$.
Then there exists an $ ( \mathcal{F}_t )_{ t \in [0,T] } $-predictable 
stochastic process
$ X \colon [0,T] \times \Omega \to E $
which satisfies for all 
$ t \in [0,T] $ 
that 
$ \P( X_t = Y_t ) = 1  $.
\end{lemma}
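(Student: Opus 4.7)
The plan is to construct predictable step-process approximations to $Y$ and to extract a limit along a suitable subsequence. For each $n \in \N$ define $X^n \colon [0,T] \times \Omega \to E$ by $X^n_0 = Y_0$ and $X^n_t = Y_{(k-1) T / n}$ for $t \in ((k-1)T/n, kT/n]$ and $k \in \{1, \ldots, n\}$. Each $X^n$ is left-continuous and $(\mathcal{F}_t)_{t \in [0,T]}$-adapted, and is therefore predictable with respect to $(\mathcal{F}_t)_{t \in [0,T]}$. Moreover, for every $t \in [0,T]$ we have $\E[\min\{1, d_E(X^n_t, Y_t)\}] \to 0$ as $n \to \infty$: at $t = 0$ this is trivial since $X^n_0 = Y_0$, whereas at $t \in (0,T]$ it follows from the hypothesis $\limsup_{[0,T] \ni s \to t} \E[\min\{1, d_E(Y_s, Y_t)\}] = 0$ together with $(k-1) T/n \to t$.

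Next I would upgrade this to $\P$-a.s.\ convergence along a single subsequence valid at every $t \in [0,T]$ simultaneously. A standard compactness and triangle-inequality argument shows that the pointwise stochastic continuity on $(0,T]$ is in fact uniform on each compact subinterval $[\nicefrac{1}{m}, T]$, so $\sup_{t \in [\nicefrac{1}{m}, T]} \E[\min\{1, d_E(X^n_t, Y_t)\}] \to 0$ as $n \to \infty$ for every $m \in \N$. One may then choose a subsequence $(n_k)_{k \in \N}$ satisfying $\sup_{t \in [\nicefrac{1}{k}, T]} \E[\min\{1, d_E(X^{n_k}_t, Y_t)\}] \leq 2^{-k}$ for all $k \in \N$. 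For any $t \in (0,T]$ pick $m \in \N$ with $\nicefrac{1}{m} \leq t$; the Markov inequality combined with the Borel--Cantelli lemma then yields $X^{n_k}_t \to Y_t$ $\P$-a.s.\ as $k \to \infty$, and at $t = 0$ this convergence is again trivial.

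Finally I would define $X \colon [0,T] \times \Omega \to E$ by setting $X_t(\omega) = \lim_{k \to \infty} X^{n_k}_t(\omega)$ on the set $C \subseteq [0,T] \times \Omega$ on which $(X^{n_k}_t(\omega))_{k \in \N}$ is Cauchy in $E$, and $X_t(\omega) = Y_0(\omega)$ otherwise. Since each $X^{n_k}$ is predictable and $d_E$ is continuous, the set $C$ is predictable, being a countable Boolean combination of level sets of the predictable functions $(t,\omega) \mapsto d_E(X^{n_k}_t(\omega), X^{n_\ell}_t(\omega))$; completeness of $(E, d_E)$ guarantees the pointwise limit exists on $C$, and $Y_0$ is $\mathcal{F}_0$-measurable, which together imply that $X$ is predictable. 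The preceding step shows that for each fixed $t \in [0,T]$ the sequence $(X^{n_k}_t)_{k \in \N}$ is $\P$-a.s.\ Cauchy with limit $Y_t$, so $\P(X_t = Y_t) = 1$ as required.

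The main technical obstacle is the upgrade from pointwise stochastic continuity only on $(0,T]$ (as opposed to all of $[0,T]$) to $\P$-a.s.\ convergence of a \emph{single} subsequence at \emph{every} $t$; this is precisely the generalization over Proposition~3.6\,(ii) of Da Prato \& Zabczyk~\cite{dz92}. The remedy is to choose a subsequence with summable error uniformly on each $[\nicefrac{1}{k}, T]$, exploiting compactness of these intervals, and to handle the initial time separately by exploiting the exact equality $X^n_0 = Y_0$ built into the construction.
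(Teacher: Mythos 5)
Your proposal is correct and follows essentially the same route as the paper's proof: your left-endpoint step processes $X^n$ coincide with the paper's $Z^N_t = Y_{\max\{\lceil t\rceil_{T/N}-T/N,0\}}$, your uniform stochastic continuity on $[\nicefrac{1}{m},T]$ is the paper's modulus $w(\varepsilon,N)$, and the subsequence with summable errors, Borel--Cantelli at each fixed $t\in(0,T]$, the exact identity at $t=0$, and the predictable convergence set are all identical in substance. The only point worth tightening is that for $t$ just above $\nicefrac{1}{m}$ the left grid point $(k-1)T/n$ may fall below $\nicefrac{1}{m}$, so the uniform modulus must be invoked on a slightly enlarged compact interval (the paper's bound $w(t-\nicefrac{T}{N},N)$ together with monotonicity of $w$ in its first argument handles exactly this); that is a cosmetic repair, not a gap.
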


\begin{proof}
First, we observe that the assumption that
$ ( \Omega, \mathcal{F}, \P ) $
is a probability space ensures that
$ \Omega \neq \emptyset $
and this implies that
$
  [0,T] \times \Omega \neq \emptyset
$.
The assumption that $ Y \colon [0,T] \times \Omega \to E $
is a mapping from $ [0,T] \times \Omega $ to $ E $
therefore ensures that $ E \neq \emptyset $.
Hence, there exists an element $ e_0 \in E $.
In the next step 
assume without loss of generality that $ T > 0 $, 
let
$ Z^N \colon [0,T] \times \Omega \to E $,
$ N \in \N $,
be the 
functions with the property that for all
$ N \in \N $, $ t \in [0,T] $
it holds that
$
  Z^N_t
  =
  Y_{
    \max\{
      \lceil t \rceil_{ T / N } - T/N
      ,
      0
    \}
  }
$, 
and let
$ w \colon (0,T] \times \N \to [0,\infty) $
be the function with the property that
for all $ \varepsilon \in (0,T] $, $ N \in \N $
it holds that
\begin{equation}
  w( \varepsilon, N )
  =
  \sup_{
    \substack{
      t_1, t_2 \in [ \varepsilon , T ] ,
    \\
      | t_1 - t_2 | \leq \nicefrac{ T }{ N }
    }
  }
  \E\big[
    \min\!\big\{
      1 ,
      d_E( Y_{ t_1 }, Y_{ t_2 } )
    \big\}
  \big]
  .
\end{equation}
The assumption that
$
  \forall \, t \in (0,T] \colon
  \lim_{ s \to t }
  \E\!\left[
    \min\!\left\{ 1,
      d_E( Y_s, Y_t )
    \right\}
  \right]
  = 0
$
ensures that 
for all $ \varepsilon \in (0,T] $
it holds that
$
  \lim_{ N \to \infty }
  w( \varepsilon, N ) = 0
$.
This implies that there exists a strictly
increasing sequence 
$ N_k \in \N $, $ k \in \N $, with the property that
for all $ k \in \N $ 
it holds that
\begin{equation}
\label{eq:sequence_Nk}
  w( \tfrac{1}{k} , N_k ) < \tfrac{ 1 }{ 2^k }
  .
\end{equation}
Next let $ X \colon [0,T] \times \Omega \to E $
be the mapping with the property that for all
$ ( t, \omega ) \in [0,T] \times \Omega $
it holds that
\begin{equation}
\label{eq:definition_of_X}
  X_t( \omega )
  =
  \begin{cases}
    \lim_{ k \to \infty }
    Z^{ N_k }_t( \omega )
  &
    \colon
    ( Z^{ N_k }_t( \omega ) )_{ k \in \N }
    \text{ is convergent}
  \\
    e_0
  &
    \colon
    \text{else}
  \end{cases}
  .
\end{equation}
The fact that for all $ N \in \N $ it holds that
$ Z^N $ is $ \mathrm{Pred}( ( \mathcal{F}_t )_{ t \in [0,T] } ) $/$ \mathcal{B}( E ) $-measurable,
the assumption that $ ( E, d_E ) $ is complete and separable,
and, e.g., Exercise~1.74 in Chapter~1 in Hoffmann-J\o{}rgensen~\cite{Hoffmann1994}
imply that
\begin{equation}
  \big\{
    ( t, \omega ) \in [0,T] \times \Omega
    \colon
    ( Z^{ N_k }_t( \omega ) )_{ k \in \N }
    \text{ is convergent}
  \big\}
  \in
  \mathrm{Pred}( ( \mathcal{F}_t )_{ t \in [0,T] } )
  .
\end{equation}
This together with 
the fact that for all $ N \in \N $ it holds that
$ Z^N $ is
$ \mathrm{Pred}( ( \mathcal{F}_t )_{ t \in [0,T] } ) $/$ \mathcal{B}( E ) $-measurable,
and, e.g., Exercise~1.74 in Chapter~1 in Hoffmann-J\o{}rgensen~\cite{Hoffmann1994}
ensure that
$ X $ is $ \mathrm{Pred}( ( \mathcal{F}_t )_{ t \in [0,T] } ) $/$ \mathcal{B}( E ) $-measurable.
It thus remains to prove that $ X $ is a modification of $ Y $.
For this we note that for all
$ N \in \N $,
$ t \in ( \frac{ T }{ N } , T ] $
it holds that
\begin{equation}
  \E\!\left[
    \min\!\left\{
      1 ,
      d_E( Y_t, Z^N_t )
    \right\}
  \right]
=
  \E\big[
    \min\!\big\{
      1 ,
      d_E( Y_t, Y_{ \lceil t \rceil_{ T / N } - T/N } )
    \big\}
  \big]
\leq
  w( t - \tfrac{ T }{ N }, N )
  .
\end{equation}
This together with \eqref{eq:sequence_Nk}, 
the fact that 
$ 
  \forall \, 
  \varepsilon_1, \varepsilon_2 \in (0,T],\, 
  N \in \N 
  \text{ with }
  \varepsilon_1 \leq \varepsilon_2 
  \colon
  w( \varepsilon_1, N ) \geq w( \varepsilon_2, N )
$, 
and the fact that 
$ 
  \forall \,
  t \in (0,T],\,
  k \in \N \cap ( \frac{ T + 1 }{ t } , \infty ) 
  \colon
  \frac{ 1 }{ k } < t - \frac{ T }{ N_k }
$
ensure that for all $ t \in (0,T] $
it holds that
\begin{equation}
\begin{split}
&
  \sum_{ k = 1 }^{ \infty }
  \E\!\left[
    \min\!\left\{
      1 ,
      d_E( Y_t, Z^{ N_k }_t )
    \right\}
  \right]
  =
  \sum_{
    k \in \N
  }
  \E\!\left[
    \min\!\left\{
      1 ,
      d_E(
        Y_t,
        Y_{
          \lceil t \rceil_{ T / N_k } - T/N_k
        }
      )
    \right\}
  \right]
\\ & =
  \sum_{
    k \in \N
    \cap ( 0, \nicefrac{ ( T + 1 ) }{ t } ]
  }
  \E\!\left[
    \min\!\left\{
      1 ,
      d_E(
        Y_t,
        Y_{
          \lceil t \rceil_{ T / N_k } - T/N_k
        }
      )
    \right\}
  \right]
\\&\quad+
  \sum_{
    k \in \N
    \cap ( \nicefrac{ ( T + 1 ) }{ t } , \infty )
  }
  \E\!\left[
    \min\!\left\{
      1 ,
      d_E(
        Y_t,
        Y_{
          \lceil t \rceil_{ T / N_k } - T/N_k
        }
      )
    \right\}
  \right]
\\ & \leq
  \frac{T+1}{t}
  +
  \sum_{
    k \in \N
    \cap ( \nicefrac{ ( T + 1 ) }{ t } , \infty )
  }
  w( t - \tfrac{ T }{ N_k } , N_k )
\leq
  \frac{T+1}{t}
  +
  \sum_{
    k \in \N
    \cap ( \nicefrac{ ( T + 1 ) }{ t } , \infty )
  }
  w( \tfrac{ 1 }{ k } , N_k )
\\ & \leq
  \frac{T+1}{t}
  +
  \sum_{
    k \in \N
    \cap ( \nicefrac{ ( T + 1 ) }{ t } , \infty )
  }
  \frac{ 1 }{ 2^k }
  < \infty
  .
\end{split}
\end{equation}
This implies that 
for all $ t \in (0,T] $
it holds $ \P $-a.s.\ that
$
  \limsup_{ k \to \infty }
  d_E( Z^{ N_k }_t, Y_t )
  = 0
$
(see, e.g., item~(ii) of Theorem~6.12 in Klenke~\cite{Klenke2008}).
This and \eqref{eq:definition_of_X}
ensure for all $ t \in (0,T] $
that
$ \P( X_t = Y_t ) = 1  $.
This and the fact that 
$ 
  \forall\, N \in \N 
  \colon
  X_0 = Z^N_0 = Y_0 
$
complete the proof
of Lemma~\ref{lem:predictability}.
\end{proof}

\begin{lemma}
\label{lem:Kuratowski}
Let 
$ (V_k, \left\|\cdot\right\|_{V_k}) $, $ k \in \{0,1\} $, 
be separable $\R$-Banach spaces with 
$ V_1 \subseteq V_0 $ continuously. 
Then 
\begin{equation}
\label{eq:Kuratowski}
  \mathcal{B}(V_1) =
  \{
    B \in \mathcal{P}(V_1) \colon
    (
      \exists \, A \in \mathcal{B}(V_0) 
      \colon
      B = A \cap V_1
    )
  \} 
  \subseteq \mathcal{B}(V_0)
  .
\end{equation}
\end{lemma}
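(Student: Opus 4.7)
The plan is to split the claim \eqref{eq:Kuratowski} into three inclusions and establish each separately, with the classical Lusin--Souslin theorem (a continuous injection between Polish spaces maps Borel sets to Borel sets) doing the heavy lifting. I first note that both $V_0$ and $V_1$, being separable Banach spaces, are Polish spaces (a norm-induced metric is complete and separability is assumed), and that the inclusion map $\iota\colon V_1 \to V_0$ is a continuous injection by hypothesis.

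The ``$\supseteq$''-direction of the middle equality in \eqref{eq:Kuratowski} is immediate: since $\iota$ is continuous it is $\mathcal{B}(V_1)$/$\mathcal{B}(V_0)$-measurable, so $\iota^{-1}(A) = A \cap V_1 \in \mathcal{B}(V_1)$ for every $A \in \mathcal{B}(V_0)$. For the reverse inclusion and for the outer containment in $\mathcal{B}(V_0)$ I invoke Lusin--Souslin applied to $\iota$. This yields simultaneously that $V_1 = \iota(V_1) \in \mathcal{B}(V_0)$ and that $\iota(B) \in \mathcal{B}(V_0)$ for every $B \in \mathcal{B}(V_1)$. Because $\iota$ acts as the identity on points, $\iota(B) = B$ when the two sides are viewed as subsets of $V_0$; hence $B \in \mathcal{B}(V_0)$, and $B = B \cap V_1$ exhibits $B$ as a trace of a set in $\mathcal{B}(V_0)$, closing both remaining cases.

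The principal obstacle is the appeal to Lusin--Souslin itself; everything else is routine bookkeeping. A convenient reference is Theorem~15.1 in Kechris's \emph{Classical Descriptive Set Theory}, which I would cite rather than reproduce the proof (which proceeds via the separation theorem for analytic sets). It is worth emphasising that the nontriviality lies entirely in the fact that the possibly \emph{finer} norm-topology on $V_1$ does not enlarge the Borel $\sigma$-algebra relative to that inherited from $V_0$; without this, the trace $\sigma$-algebra $\{A \cap V_1 \colon A \in \mathcal{B}(V_0)\}$ could in principle be a strict subset of $\mathcal{B}(V_1)$.
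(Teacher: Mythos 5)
Your proposal is correct and follows essentially the same route as the paper: the easy inclusion comes from continuity of the inclusion map, and the hard direction together with $V_1 \in \mathcal{B}(V_0)$ comes from the Lusin--Souslin/Kuratowski theorem on injective Borel images (the paper cites Theorem~2.4 in Chapter~V of Parthasarathy where you cite Kechris, but it is the same result).
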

\begin{proof}
Throughout this proof let 
$ \varphi \colon V_1 \to V_0 $ 
and 
$ \phi \colon V_1 \to V_1 $ 
be the mappings with the property that for all 
$ v \in V_1 $ it holds that 
$ \varphi(v) = \phi(v) = v $. 
Next observe that 
$ \varphi \in \mathcal{C}(V_1,V_0) $. 
This implies that 
$ \varphi \in \mathcal{M}( \mathcal{B}(V_1), \mathcal{B}(V_0) ) $. 
Hence, we obtain that 
\begin{equation}
\label{eq:Kuratowski.subset.I}
  \{
  B \in \mathcal{P}(V_1) \colon
  (
  \exists \, A \in \mathcal{B}(V_0) 
  \colon
  B = A \cap V_1
  )
  \} 
  \subseteq
  \mathcal{B}(V_1)
.
\end{equation}
Moreover, note that the fact that 
$ \varphi \in \mathcal{M}( \mathcal{B}(V_1), \mathcal{B}(V_0) ) $ 
allows us to apply, e.g., Theorem~2.4 in Chapter~V in Parthasarathy~\cite{Parthasarathy67} 
(with 
$  
(X,\mathscr{B}) = (V_1,\mathcal{B}(V_1))
$, 
$
(Y,\mathscr{C}) = (V_0,\mathcal{B}(V_0))
$, 
and 
$ \varphi = \varphi $ 
in the notation of Theorem~2.4 in Chapter~V in Parthasarathy~\cite{Parthasarathy67}) 
to obtain that for all 
$ C \in \mathcal{B}(V_1) $ 
it holds that  
$ V_1 = \varphi(V_1) \in \mathcal{B}(V_0) $ 
and 
$
  C
  =
  \phi(C)
  =
  (\phi^{-1})^{-1}(C)  
  \in
  \{
  B \in \mathcal{P}(V_1) \colon
  (
  \exists \, A \in \mathcal{B}(V_0) 
  \colon
  B = A \cap V_1
  )
  \}   
$. 
This implies that 
\begin{equation}
\label{eq:Kuratowski.subset.II}
  \mathcal{B}(V_1)
  \subseteq
  \{
  B \in \mathcal{P}(V_1) \colon
  (
  \exists \, A \in \mathcal{B}(V_0) 
  \colon
  B = A \cap V_1
  )
  \} 
  .
\end{equation}
Combining~\eqref{eq:Kuratowski.subset.I}, \eqref{eq:Kuratowski.subset.II}, 
and the fact that $ V_1 \in \mathcal{B}(V_0) $ 
completes the proof of Lemma~\ref{lem:Kuratowski}.
\end{proof}

\begin{lemma}[Non-stochastic integral]
\label{lem:integrals}
Assume the setting in Section~\ref{sec:SPDE_setting}, 
let 
$ \delta \in \R $, 
$
  \lambda \in ( -\infty, 1 )
$, 
and let 
$ Y \colon [0,T] \times \Omega \to H_{\delta} $ 
be an $ (\mathcal{F}_t)_{ t \in [0,T] } $-predictable stochastic process which satisfies 
$ Y( (0,T] \times \Omega ) \subseteq H $ 
and 
$
  \sup_{ t \in (0,T] }
  t^\lambda \, 
  \| Y_t \|_{ \lpn{p}{\P}{H} }
  < \infty
$.
Then
\begin{enumerate}[(i)]
\item 
\label{item:integral.defined}
for all $ t \in [0,T] $ it holds $ \P $-a.s.\ that
$
  \int_0^t \| e^{ ( t - s ) A } \mathbf{F}( s, Y_s ) \|_H \, ds < \infty
$,
\item 
there exists an up-to-modifications unique
$ (\mathcal{F}_t)_{ t \in [0,T] } $-predictable stochastic process 
$
  \bar{Y} \colon [0,T] \times \Omega \to H
$ 
such that for all $ t \in [0,T] $ it holds $ \P $-a.s.\ that
$
  \bar{Y}_t = \int_0^t e^{ ( t - s ) A } \mathbf{F}( s, Y_s ) \, ds
$, 
\item
it holds that 
\begin{equation}
\label{eq:bound_Lebesgue1}
\begin{split}
&
  \sup_{ t \in (0,T] }
  t^{ (\max\{ \lambda, \hat{\alpha} \} + \alpha - 1) }\,
  \| \bar{Y}_t \|_{ \lpn{p}{\P}{H} }
\leq
  \left(
  \hat{L}_0
  +
  L_0
  \sup\nolimits_{ t \in (0,T] }
    t^\lambda\,
    \|
      Y_t
    \|_{
      \lpn{p}{\P}{H}
    }
  \right)
\\&\cdot
  | T \vee 1 |^{ | \lambda - \hat{\alpha} | }
  \,
  \mathbbm{B}\big(
    1 - \alpha,
    1 - \max\{ \lambda, \hat{ \alpha } \}
  \big)
  \,
  \chi_{ A , \eta }^{ \alpha , T }
  < \infty
  ,
\end{split}
\end{equation}
\item
\label{item:drift.temporal}
and for all 
$\varrho\in[0,1-\alpha)$, 
$s,t\in(0,T]$ 
with $s<t$ 
it holds that 
\begin{equation}
\begin{split}
&
  \|\bar{Y}_t-\bar{Y}_s\|_{\lpn{p}{\P}{H}}
\leq
  | T \vee 1 |^{ |\lambda-\hat{\alpha}| }
  \left(
  \hat{L}_0
  +
  L_0
  \sup\nolimits_{ u \in (0,T] }
    u^\lambda\,
    \|
      Y_u
    \|_{
      \lpn{p}{\P}{H}
    }
  \right)
  \left|
    t - s
  \right|^{
    \varrho
  }
\\&\cdot
  \bigg[
  \tfrac{
  \chi^{ \alpha, T }_{A,\eta} \,
    \left| t - s \right|^{ (1 - \alpha-\varrho) }
  }{
    \left(
      1 - \alpha
    \right)
    \min\{
    s^{
      \max\{
        \lambda,
        \hat{ \alpha }
      \}
    }
    ,
   t^{
      \max\{
        \lambda,
        \hat{ \alpha }
      \}
    }
    \}
  }
+
  \tfrac{
  \kappa^{\varrho,T}_{A,\eta}
  \,
  \chi^{ \varrho + \alpha, T }_{A,\eta} \,
    \mathbbm{B}(
      1 -
      \alpha
      - \varrho
      ,
      1 -
      \max\{ \lambda, \hat{ \alpha } \}
    )
  }{
    s^{(
      \varrho + \alpha
      +
      \max\{ \lambda , \hat{ \alpha } \}
      - 1
    )}
  }
  \bigg]
  .
\end{split}
\end{equation}
\end{enumerate}
\end{lemma}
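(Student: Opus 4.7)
The plan is to derive everything from a single pointwise bound combining the Lipschitz assumption~\eqref{eq:F_Lipschitz} with the assumed singular behaviour of $Y$. Concretely, for all $s \in (0,T]$ the triangle inequality gives
\begin{equation*}
  \|\mathbf{F}(s, Y_s)\|_{ L^p(\P; H_{-\alpha}) }
  \leq
  \hat{L}_0\, s^{-\hat\alpha}
  + L_0\, s^{-\lambda}
  \Big(
    \sup\nolimits_{ u \in (0,T] }
    u^{\lambda}\, \|Y_u\|_{L^p(\P;H)}
  \Big) .
\end{equation*}
Combining this with the semigroup bound $\|(\eta-A)^\alpha e^{(t-s)A}\|_{L(H)} \leq \chi^{\alpha,T}_{A,\eta}\, (t-s)^{-\alpha}$ and applying the Minkowski integral inequality reduces matters to integrals of the form $\int_0^t (t-s)^{-\alpha} s^{-\gamma}\,ds = t^{ (1-\alpha-\gamma) } \mathbbm{B}(1-\alpha, 1-\gamma)$ for $\gamma \in \{\hat\alpha, \lambda\}$. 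To unify the two cases I dominate both exponents by $\max\{\lambda,\hat\alpha\}$, absorbing the difference on the monomial into $t^{\max\{\lambda,\hat\alpha\}-\gamma} \leq |T\vee 1|^{|\lambda-\hat\alpha|}$, and using that $\mathbbm{B}(1-\alpha, \cdot)$ is decreasing in its second argument to get $\mathbbm{B}(1-\alpha, 1-\gamma) \leq \mathbbm{B}(1-\alpha, 1-\max\{\lambda,\hat\alpha\})$. This delivers the bound in item~(iii); a fortiori the $L^p(\P;\R)$-norm of $\int_0^t \|e^{(t-s)A} \mathbf{F}(s,Y_s)\|_H\,ds$ is finite, which implies the $\P$-a.s.\ finiteness claimed in item~(i).

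Second, on the $\P$-full event on which the integrand is Bochner integrable I define $\bar Y_t(\omega)$ as the Bochner integral in $H$ and set $\bar Y_t(\omega) = 0$ elsewhere. Joint measurability of $(s,\omega) \mapsto e^{(t-s)A} \mathbf{F}(s, Y_s(\omega))$ follows from Lemma~\ref{lem:Kuratowski} (which lets me view $Y|_{(0,T]\times \Omega}$ as an $H$-valued predictable process and hence compose with $\mathbf{F}$), measurability of $\mathbf{F}$, and strong continuity of the analytic semigroup; Fubini then shows $\bar Y_t$ is $\mathcal F_t$-measurable, so $\bar Y$ is $(\mathcal F_t)_{t \in [0,T]}$-adapted. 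To upgrade adaptedness to predictability I apply Lemma~\ref{lem:predictability}, whose continuity hypothesis is supplied by the Hölder estimate in item~(iv); uniqueness up to modifications then follows since $\bar Y$ is determined $\P$-a.s.\ at each $t$.

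Third, for item~(iv) I exploit the decomposition
\begin{equation*}
  \bar Y_t - \bar Y_s
  =
  \rob{ e^{(t-s)A} - \operatorname{Id}_H } \bar Y_s
  +
  \int_s^t e^{(t-u)A} \mathbf{F}(u, Y_u)\,du
\end{equation*}
for $0 < s < t \leq T$. For the first summand I apply $\|(e^{(t-s)A}-\operatorname{Id}_H) x\|_H \leq \kappa^{\varrho,T}_{A,\eta}\, (t-s)^\varrho\, \|(\eta-A)^\varrho x\|_H$, writing $(\eta-A)^\varrho \bar Y_s$ as a Bochner integral and repeating the first-paragraph estimate but with $(\eta-A)^{\varrho+\alpha} e^{(s-u)A}$ replacing $(\eta-A)^\alpha e^{(t-s)A}$; this produces the $\chi^{\varrho+\alpha,T}_{A,\eta}$ factor and the beta function $\mathbbm{B}(1-\alpha-\varrho, 1-\max\{\lambda,\hat\alpha\})$, with the hypothesis $\varrho \in [0, 1-\alpha)$ keeping $1-\alpha-\varrho > 0$. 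For the second summand the same estimate on $[s,t]$ in place of $[0,t]$, together with the sign-insensitive bound $u^{-\max\{\lambda,\hat\alpha\}} \leq 1/\min\{s^{\max\{\lambda,\hat\alpha\}}, t^{\max\{\lambda,\hat\alpha\}}\}$ for $u \in [s,t]$, reduces the integral to $\int_s^t (t-u)^{-\alpha}\, du = (t-s)^{1-\alpha}/(1-\alpha)$, which I factor as $(t-s)^\varrho (t-s)^{1-\alpha-\varrho}/(1-\alpha)$. Summing the two contributions gives the stated bound.

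The main obstacle I expect is the bookkeeping needed to merge the two singular exponents $\hat\alpha$ and $\lambda$ into a single expression in $\max\{\lambda,\hat\alpha\}$: ensuring the prefactor $|T\vee 1|^{|\lambda-\hat\alpha|}$ is correct in both regimes $T \leq 1$ and $T > 1$, applying the monotonicity of $\mathbbm{B}(1-\alpha,\cdot)$ in the right direction, and handling the two possible signs of $\max\{\lambda,\hat\alpha\}$ uniformly in the temporal estimate. The measurability considerations and the application of Lemma~\ref{lem:predictability} are routine given the strong continuity of the analytic semigroup and should not present independent difficulties.
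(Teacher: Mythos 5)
Your proposal is correct and follows essentially the same route as the paper: the same pointwise Lipschitz-plus-smoothing bound merged into a single $\max\{\lambda,\hat\alpha\}$-singularity via the $|T\vee 1|^{|\lambda-\hat\alpha|}$ factor and the Beta integral for items (i) and (iii), the same two-piece decomposition (a ``smoothing of the increment operator'' term plus an $\int_s^t$ tail) for item (iv), and the same appeal to Lemma~\ref{lem:predictability} to upgrade the mean-continuous adapted process to a predictable modification for item (ii). The only cosmetic differences are that you pull $(e^{(t-s)A}-\operatorname{Id}_H)$ outside the Bochner integral and apply it to $\bar Y_s\in H_\varrho$ rather than keeping it under the integral sign, and that you compare two Beta functions by monotonicity instead of dominating the integrand before integrating; both yield the identical estimates.
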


\begin{proof}
Throughout this proof let 
$ K \in [0,\infty) $ 
be the real number given by 
$
  K
  =
  \sup_{ t \in (0,T] }
  t^\lambda \,
  \| Y_t \|_{ \lpn{p}{\P}{H} }
$.
We observe that~\eqref{eq:F_Lipschitz} implies that for all $ t \in (0,T] $
it holds that
\begin{equation}\label{eq:F_finite}
\begin{split}
& \int_0^t
    \big\|
      e^{(t-s)A}
      \mathbf{F}(s,Y_s)
    \big\|_{
      L^p( \P ; H )
    }
    \,
    \diffns s
\\ &
  \leq
  \chi_{ A , \eta }^{ \alpha , T }
  \int_0^t
    \left( t - s \right)^{ - \alpha }
    \left(
      \left\|
        \mathbf{F}( s, Y_s ) - \mathbf{F}( s, 0 )
      \right\|_{ L^p( \P; H_{ - \alpha } ) }
      +
      \left\|
        \mathbf{F}( s, 0 )
      \right\|_{ L^p( \P; H_{ - \alpha } ) }
    \right)
  \diffns s
\\ &
  \leq
  \chi_{ A , \eta }^{ \alpha , T }
  \int_0^t
    \left( t - s \right)^{ - \alpha }
    \left(
      L_0
      \left\|
        Y_s
      \right\|_{
        L^p( \P; H )
      }
      +
      \hat{L}_0
      \,
      s^{ - \hat{\alpha} }
    \right)
  \diffns s
\\ &
  \leq
  (
    K L_0
    +
    \hat{L}_0
  )
  \,
  \chi_{ A , \eta }^{ \alpha , T }
  \int_0^t
    \left( t - s \right)^{ - \alpha }
    \max\!\big\{
      s^{ - \lambda } ,
      s^{ - \hat{ \alpha } }
    \big\}
  \,
  \diffns s
\\ &
  \leq
  (
    K L_0
    +
    \hat{L}_0
  )
  \,
  \chi_{ A , \eta }^{ \alpha , T }\,
  | T \vee 1 |^{ | \lambda - \hat{\alpha} | }
  \int_0^t
    \left( t - s \right)^{ - \alpha }
    s^{
      -\max\{ \lambda, \hat{ \alpha } \}
    }
  \,
  \diffns s
\\ & \leq
  (
    K L_0
    +
    \hat{L}_0
  )
  \,
  \chi_{ A , \eta }^{ \alpha , T }\,
  | T \vee 1 |^{ | \lambda - \hat{\alpha} | }
  \,
  \mathbbm{B}\big(
    1 - \alpha,
    1 - \max\{ \lambda, \hat{ \alpha } \}
  \big)
  \,
  t^{
    (1 - \alpha - \max\{ \lambda, \hat{ \alpha } \})
  }
  .
\end{split}
\end{equation}
In particular, this ensures that
for all $ t \in [0,T] $ it holds $ \P $-a.s.\ that
$
  \int_0^t \| e^{ ( t - s ) A } \mathbf{F}( s, Y_s ) \|_H \, ds < \infty
$.
Moreover, we note that for all
$
  \varrho \in [ 0, 1 - \alpha )
$,	
$ t_1, t_2 \in (0,T] $ with $ t_1 < t_2 $
it holds that
\begin{equation}
\begin{split}
&
  \left\|
    \int_0^{t_2}
        e^{(t_2-s)A}
        \mathbf{F}(s, Y_s)
        \,
    \diffns s
    -
    \int_0^{t_1}
        e^{(t_1-s)A}
        \mathbf{F}(s, Y_s)
        \,
    \diffns s
  \right\|_{
    L^p(\P; H)
  }
\\& \leq
  \int_0^{t_1}
    \left\|
      \left(
        e^{ ( t_2 - s ) A } - e^{ ( t_1 - s ) A }
      \right)
      \mathbf{F}( s, Y_s )
    \right\|_{
      L^p(\P;H)
    }
  \diffns s
    +
    \int_{t_1}^{t_2}
    \left\|
      e^{ (t_2-s) A }
      \mathbf{F}( s, Y_s )
    \right\|_{
      L^p( \P ; H )
    }
  \diffns s
\\ & \leq
    \left\|
      \left(
        \operatorname{Id}_H - e^{ ( t_2 - t_1 ) A }
      \right)
    \right\|_{
        L(H_{\varrho},H)
    }
    \int_0^{t_1}
      \left\|
        e^{ ( t_1 - s ) A }
      \right\|_{
        L( H_{ - \alpha }, H_{ \varrho } )
      }
      \|
        \mathbf{F}( s, Y_s )
      \|_{
        L^p( \P ; H_{ - \alpha } )
      }
      \,
    \diffns s
\\& \quad
    +
    \int_{ t_1 }^{ t_2 }
      \left\|
        e^{ ( t_2 - s ) A }
      \right\|_{
        L( H_{ - \alpha }, H )
      }
      \|
        \mathbf{F}( s, Y_s )
      \|_{
        L^p( \P ; H_{ - \alpha } )
      }
      \,
    \diffns s
  .
\end{split}
\end{equation}
Assumption~\eqref{eq:F_Lipschitz} hence implies that for all
$
  \varrho \in [ 0, 1 - \alpha )
$,	
$ t_1, t_2 \in (0,T] $ with $ t_1 < t_2 $
it holds that
\begin{equation}
\label{eq:Holder_F}
\begin{split}
&
  \left\|
    \int_0^{t_2}
        e^{(t_2-s)A}
        \mathbf{F}(s, Y_s)
        \,
    \diffns s
    -
    \int_0^{t_1}
        e^{(t_1-s)A}
        \mathbf{F}(s, Y_s)
        \,
    \diffns s
  \right\|_{
    L^p(\P; H)
  }
\\ & \leq
  \kappa^{\varrho,T}_{A,\eta}
  \,
  \chi^{ \varrho + \alpha, T }_{A,\eta}
  \left| t_2 - t_1 \right|^{ \varrho }
  \int_0^{ t_1 }
    \left(
      t_1 - s
    \right)^{
      - (\alpha + \varrho)
    }
    \left(
      L_0
      \left\| Y_s \right\|_{
        \lpn{p}{\P}{H}
      }
      +
      \hat{L}_0
      \,
      s^{ - \hat{\alpha} }
    \right)
  \diffns s
\\& \quad
  +
  \chi^{ \alpha, T }_{A,\eta}
  \int_{ t_1 }^{ t_2 }
    \left(
      t_2 - s
    \right)^{
      - \alpha
    }
    \left(
      L_0
      \left\| Y_s \right\|_{
        \lpn{p}{\P}{H}
      }
      +
      \hat{L}_0
      \,
      s^{ - \hat{ \alpha } }
    \right)
  \diffns s
\\ & \leq
  (
    K L_0
    +
    \hat{L}_0
  )
  \,
  \kappa^{\varrho,T}_{A,\eta}
  \,
  \chi^{ \varrho + \alpha, T }_{A,\eta}
  \left| t_2 - t_1 \right|^{ \varrho }
  \int_0^{ t_1 }
    \left(
      t_1 - s
    \right)^{
      - (\alpha + \varrho)
    }
    \max\!\big\{
      s^{ - \lambda }
      ,
      s^{ - \hat{ \alpha } }
    \big\}
    \,
  \diffns s
\\ & \quad
  +
  (
    K L_0
    +
    \hat{L}_0
  )
  \,
  \chi^{ \alpha, T }_{A,\eta}
  \int_{ t_1 }^{ t_2 }
    \left(
      t_2 - s
    \right)^{
      - \alpha
    }
    \max\!\big\{
      s^{ - \lambda }
      ,
      s^{ - \hat{ \alpha } }
    \big\}
    \,
  \diffns s
\\ & \leq
  (
    K L_0
    +
    \hat{L}_0
  ) \,
  | T \vee 1 |^{ |\lambda-\hat{\alpha}| }
  \bigg[
  \chi^{ \alpha, T }_{A,\eta}
  \int_{ t_1 }^{ t_2 }
    \left(
      t_2 - s
    \right)^{
      - \alpha
    }
    s^{
      -
      \max\{
        \lambda,
        \hat{ \alpha }
      \}
    }
    \,
  \diffns s
\\&\quad+
  \kappa^{\varrho,T}_{A,\eta}
  \,
  \chi^{ \varrho + \alpha, T }_{A,\eta}
  \left| t_2 - t_1 \right|^{ \varrho }
  \int_0^{ t_1 }
    \left(
      t_1 - s
    \right)^{
      - (\alpha + \varrho)
      }
    s^{
      -\max\{
        \lambda,
        \hat{ \alpha }
      \}
    }
    \,
  \diffns s
  \bigg]
\\ & \leq
  \bigg[
  \tfrac{
  \chi^{ \alpha, T }_{A,\eta} \,
    \left| t_2 - t_1 \right|^{ (1 - \alpha) }
  }{
    \left(
      1 - \alpha
    \right)
    \min\{
    |
      t_1
    |^{
      \max\{
        \lambda,
        \hat{ \alpha }
      \}
    }
    ,
    |
      t_2
    |^{
      \max\{
        \lambda,
        \hat{ \alpha }
      \}
    }
    \}
  }
+
  \tfrac{
  \kappa^{\varrho,T}_{A,\eta}
  \,
  \chi^{ \varrho + \alpha, T }_{A,\eta} \,
  \left| t_2 - t_1 \right|^{ \varrho } \,
    \mathbbm{B}(
      1 -
      \alpha
      - \varrho
      ,
      1 -
      \max\{ \lambda, \hat{ \alpha } \}
    )
  }{
    \left| t_1 \right|^{(
      \varrho + \alpha
      +
      \max\{ \lambda , \hat{ \alpha } \}
      - 1
    )}
  }
  \bigg]
\\&\quad\cdot
  (
    K L_0
    +
    \hat{L}_0
  ) \,
  | T \vee 1 |^{ |\lambda-\hat{\alpha}| }
  .
\end{split}
\end{equation}
Combining
\eqref{eq:F_finite},
\eqref{eq:Holder_F},
and
Lemma~\ref{lem:predictability}
completes the proof
of Lemma~\ref{lem:integrals}.
\end{proof}

\begin{lemma}[Stochastic integral]
\label{lem:integrals2}
Assume the setting in Section \ref{sec:SPDE_setting}, 
let 
$ \delta, \lambda \in \R $, 
$
  \rho
  =
  \max\{
    \lambda + ( \hat{\beta} - \lambda ) 
    \1_{\{0\}}(L_1)
    ,
    \hat{\beta}
  \}
$ 
satisfy 
$
  \lambda \, \mathbbm{1}_{ (0,\infty) }(L_1)
  < \nicefrac{1}{2}
$, 
and let 
$ Y \colon [0,T] \times \Omega \to H_{\delta} $ 
be an $ (\mathcal{F}_t)_{ t \in [0,T] } $-predictable stochastic process which satisfies 
$ Y( (0,T] \times \Omega ) \subseteq H $ 
and 
$
  \sup_{ t \in (0,T] }
  t^\lambda \, 
  \| Y_t \|_{ \lpn{p}{\P}{H} }
  < \infty
$.
Then 
\begin{enumerate}[(i)]
\item 
\label{item:stoch.integral.defined}
for all $ t \in [0,T] $ it holds $ \P $-a.s.\ that
$
  \int_0^t \| e^{ (t - s) A } \mathbf{B}( s, Y_s ) \|^2_{ HS( U, H ) } \, ds < \infty
$, 
\item 
there exists an up-to-modifications unique
$ (\mathcal{F}_t)_{ t \in [0,T] } $-predictable stochastic process 
$
  \bar{Y} \colon [0,T] \times \Omega \to H
$ 
such that for all $ t \in [0,T] $
it holds $ \P $-a.s.\ that
$
  \bar{Y}_t =
  \int_0^t
    e^{ ( t - s ) A }
    \mathbf{B}( s , Y_s )
  \, \diffns W_s
$, 
\item
it holds that 
\begin{equation}\label{eq:bound_Lebesgue1_B}
\begin{split}
&
    \sup_{ t \in (0,T] }
    t^{ ( \rho + \beta - 1/2 ) }\,
    \|
      \bar{Y}_t
    \|_{\lpn{p}{\P}{H}}
\leq
  \sqrt{
    \tfrac{p \, (p-1)}{2}\,
    \mathbb{B}
    \big(
      1-2\beta,1-2\rho
    \big)
  }
\\&\cdot
  | T \vee 1 |^{ | \lambda - \hat{\beta} | \mathbbm{1}_{(0,\infty)}(L_1) }\,
  \chi_{ A, \eta }^{ \beta, T }
  \left(
  \hat{L}_1
  +
  L_1
  \sup\nolimits_{ t \in (0,T] }
    t^\lambda\,
    \|
      Y_t
    \|_{
      \lpn{p}{\P}{H}
    }
  \right)
  < \infty,
\end{split}
\end{equation}
\item
\label{item:diff.temporal}
and for all 
$\varrho\in[0,\nicefrac{1}{2}-\beta)$, 
$s,t\in(0,T]$ 
with $s<t$ 
it holds that 
\begin{equation}
\begin{split}
&
  \|\bar{Y}_t-\bar{Y}_s\|_{\lpn{p}{\P}{H}}
\leq
  | T \vee 1 |^{ |\lambda-\hat{\beta}| \1_{(0,\infty)}(L_1) }
  \left(
  \hat{L}_1
  +
  L_1
  \sup\nolimits_{ u \in (0,T] }
    u^\lambda\,
    \|
      Y_u
    \|_{
      \lpn{p}{\P}{H}
    }
  \right)
  \left|
    t - s
  \right|^{
    \varrho
  }
\\&\cdot
  \sqrt{\tfrac{p\,(p-1)}{2}} \,
  \bigg[
  \tfrac{
  \chi^{
    \beta, T
  }_{A,\eta}\,
    \left|
      t - s
    \right|^{
      ( 1/2 - \beta - \varrho )
    }
  }{
    \min\{
    s^\rho
    ,
    t^\rho
    \} \,
    \sqrt{1-2\beta}
  }
 +
    \tfrac{
  \kappa^{
    \varrho, T
  }_{A,\eta}
  \,
  \chi^{
    \varrho + \beta, T
  }_{A,\eta} \,
      |
      \mathbb{B}( 1 - 2 \beta - 2 \varrho, 1 - 2 \rho )
      |^{1/2}
    }{
      s^{ ( \rho + \varrho + \beta - 1/2 ) }
    }
  \bigg]
  .
\end{split}
\end{equation}
\end{enumerate}
\end{lemma}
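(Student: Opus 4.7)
My plan is to mirror the proof of Lemma~\ref{lem:integrals} with the triangle inequality replaced by a Burkholder--Davis--Gundy (BDG) type inequality for Hilbert-space-valued stochastic integrals. As a preliminary reduction I set $K = \sup_{t \in (0,T]} t^{\lambda}\,\|Y_t\|_{L^p(\P;H)} \in [0,\infty)$ and use~\eqref{eq:G_Lipschitz} together with $\mathbf{B}(s,Y_s) = [\mathbf{B}(s,Y_s) - \mathbf{B}(s,0)] + \mathbf{B}(s,0)$ to derive the pointwise-in-$s$ bound
\[
  \|\mathbf{B}(s,Y_s)\|_{L^p(\P;HS(U,H_{-\beta}))}
  \leq L_1 K\, s^{-\lambda} + \hat L_1\, s^{-\hat\beta}
  \leq (L_1 K + \hat L_1)\,|T \vee 1|^{|\lambda - \hat\beta|\1_{(0,\infty)}(L_1)}\, s^{-\rho},
\]
where the case distinction in $\rho$ precisely kills the $\lambda$-dependence when $L_1 = 0$ (so that the hypothesis $\lambda \cdot \1_{(0,\infty)}(L_1) < 1/2$, not the stronger $\lambda < 1/2$, is enough).

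For the $L^p$ estimate~\eqref{eq:bound_Lebesgue1_B} (and hence for item~\eqref{item:stoch.integral.defined}) I would apply the sharp-constant BDG inequality for stochastic integrals against an $\mathrm{Id}_U$-cylindrical Wiener process (valid for $p \geq 2$ with constant $\sqrt{p(p-1)/2}$; see, e.g., Da~Prato \& Zabczyk~\cite{dz92}) together with the smoothing estimate $\|e^{(t-s)A}\|_{L(H_{-\beta},H)} \leq \chi^{\beta,T}_{A,\eta}(t-s)^{-\beta}$ to reduce the $L^p(\P;H)$ norm of $\bar Y_t$ to a deterministic integral of the form $\int_0^t (t-s)^{-2\beta} s^{-2\rho}\, ds$. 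The constraints $\beta < 1/2$ and $\rho < 1/2$ (which follow from the standing hypotheses on $\beta$, $\hat\beta$, $\lambda$) make this a Beta integral equal to $t^{(1-2\beta-2\rho)} \mathbb{B}(1-2\beta, 1-2\rho)$, and the stated estimate follows after taking the square root.

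For the temporal increment in item~\eqref{item:diff.temporal} I would split, for $0 < s < t \leq T$,
\[
  \bar Y_t - \bar Y_s
  = \int_s^t e^{(t-u)A}\mathbf{B}(u,Y_u)\,\diffns W_u
  + \int_0^s \big(e^{(t-s)A} - \mathrm{Id}_H\big)\, e^{(s-u)A}\mathbf{B}(u,Y_u)\,\diffns W_u,
\]
and apply BDG to each piece. For the second piece I would use $\|e^{(t-s)A} - \mathrm{Id}_H\|_{L(H_\varrho,H)} \leq \kappa^{\varrho,T}_{A,\eta}(t-s)^\varrho$ combined with $\|e^{(s-u)A}\|_{L(H_{-\beta},H_\varrho)} \leq \chi^{\varrho+\beta,T}_{A,\eta}(s-u)^{-\varrho-\beta}$; substituting the preliminary bound above, both resulting deterministic integrals are Beta integrals and produce, respectively, the $|t-s|^{(1/2-\beta-\varrho)}/\sqrt{1-2\beta}$ term and the $\mathbb{B}(1-2\beta-2\varrho,1-2\rho)^{1/2}$ term inside the bracket of the claimed estimate.

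Finally, for item~(ii), the Hölder estimate from item~\eqref{item:diff.temporal} shows that any $(\mathcal{F}_t)_{t \in [0,T]}$-adapted version of the stochastic integral (the standard Itô construction yields one) is $L^p(\P;H)$-continuous on $(0,T]$, while item~\eqref{item:stoch.integral.defined} together with $\rho + \beta < 1/2$ gives $\lim_{t \searrow 0}\E[\min\{1, \|\bar Y_t\|_H\}] = 0$. Hence the hypothesis of Lemma~\ref{lem:predictability} is met and a predictable modification exists; uniqueness up to modifications among predictable processes with the claimed integrability follows because any two such modifications agree $\P$-a.s.\ for every fixed $t$. The main subtlety throughout will be threading the case distinction $L_1 = 0$ versus $L_1 > 0$ (in both the exponent $\rho$ and the interpolation factor $|T \vee 1|^{|\lambda - \hat\beta|\1_{(0,\infty)}(L_1)}$) cleanly through every estimate, so that the result remains finite and meaningful when $L_1 = 0$ even for $\lambda \geq 1/2$.
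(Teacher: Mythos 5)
Your proposal is correct and follows essentially the same route as the paper: the same pointwise reduction via \eqref{eq:G_Lipschitz} and the exponent $\rho$, the same Burkholder--Davis--Gundy inequality with constant $\sqrt{p(p-1)/2}$ and smoothing estimate leading to the Beta integral $\mathbbm{B}(1-2\beta,1-2\rho)$, the same two-piece splitting of the temporal increment using $\kappa^{\varrho,T}_{A,\eta}$ and $\chi^{\varrho+\beta,T}_{A,\eta}$, and the same appeal to Lemma~\ref{lem:predictability} for the predictable modification in item~(ii). No gaps.
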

\begin{proof}
Throughout this proof let 
$ K \in [0,\infty) $ 
be the real number given by 
$
  K
  =
  \sup_{ t \in (0,T] }
  t^\lambda \,
  \| Y_t \|_{ \lpn{p}{\P}{H} }
$.
We observe that~\eqref{eq:G_Lipschitz} implies 
for all $ t \in (0,T] $
that
\begin{equation}
\label{eq:B_finite}
\begin{split}
&
  \int_0^t
    \left\|
      e^{ ( t - s ) A }
      \mathbf{B}( s, Y_s )
    \right\|_{
      L^p(\P;HS(U,H))
    }^2
  \diffns s
\\ & \leq
  |\chi_{ A, \eta }^{ \beta, T }|^2
  \int_0^t
    \left(
      t - s
    \right)^{
      - 2 \beta
    }
    \left(
      L_1
      \left\|
        Y_s
      \right\|_{
        L^p( \P; H )
      }
      +
      \hat{L}_1
      \,
      s^{ - \hat{ \beta } }
    \right)^{ \! 2 }
  \diffns s
\\& \leq
  |\chi_{ A, \eta }^{ \beta, T }|^2
  \,
  ( K L_1 + \hat{L}_1 )^2
  \int_0^t
    \left( t - s \right)^{
      - 2 \beta
    }
      \max\!\big\{
        s^{ - 2( \lambda + (\hat{\beta}-\lambda)\mathbbm{1}_{\{0\}}(L_1) ) },
        s^{ - 2 \hat{ \beta } }
      \big\}
    \,
  \diffns s
\\& \leq
  |\chi_{ A, \eta }^{ \beta, T }|^2
  \,
  ( K L_1 + \hat{L}_1 )^2\,
  | T \vee 1 |^{ 2| \lambda - \hat{\beta} | \mathbbm{1}_{(0,\infty)}(L_1) }
  \int_0^t
      \left( t - s \right)^{
        - 2 \beta
      }
      s^{
        -2\rho
      }
    \,
  \diffns s
\\& \leq
  |\chi_{ A, \eta }^{ \beta, T }|^2
  \,
  ( K L_1 + \hat{L}_1 )^2\,
  | T \vee 1 |^{ 2| \lambda - \hat{\beta} | \mathbbm{1}_{(0,\infty)}(L_1) } \,
  \mathbb{B}
  \big(
    1-2\beta,1-2\rho
  \big)
  \,
  t^{ (1 - 2\beta - 2\rho) }
  .
\end{split}
\end{equation}
This implies, in particular, that
for all $ t \in [0,T] $ it holds $ \P $-a.s.\ that
$
  \int_0^t \| e^{ (t - s) A } \mathbf{B}( s, Y_s ) \|^2_{ HS( U, H ) } \, ds 
$
$
  < \infty
$.
In addition, \eqref{eq:B_finite}
and the Burkholder-Davis-Gundy type inequality
in Lemma~7.7 in Da Prato \& Zabczyk~\cite{dz92}
ensure that for all $ t \in (0,T] $
it holds that
\begin{equation}
\label{eq:B_finite2}
\begin{split}
&
  \left\|
    \int_0^t
    e^{ (t - s ) A } \mathbf{B}( s, Y_s )
    \, dW_s
  \right\|_{
    L^p( \P; H )
  }
\\&\leq
  \left[
    \frac{
      p \left( p - 1 \right)
    }{ 2 }
    \int_0^t
      \left\|
        e^{ ( t - s ) A }
        \mathbf{B}( s, Y_s )
      \right\|_{
        L^p(\P;HS(U,H))
      }^2
    \diffns s
  \right]^{ 1/2 }
\\ & \leq
  \chi_{ A, \eta }^{ \beta, T }
  \,
  ( K L_1 + \hat{L}_1 )\,
  | T \vee 1 |^{ | \lambda - \hat{\beta} | \mathbbm{1}_{(0,\infty)}(L_1) }
  \sqrt{
    \tfrac{ p \, (p-1) }{2} \,
    \mathbb{B}
    \big(
      1-2\beta,1-2\rho
    \big)
  }
  \,
  t^{ (\nicefrac{1}{2} - \beta - \rho) }
  .
\end{split}
\end{equation}
Furthermore, we observe that
the Burkholder-Davis-Gundy type inequality in
Lemma~7.7 in Da Prato \& Zabczyk~\cite{dz92}
proves that for all
$
  \varrho \in
  [ 0 , \nicefrac{1}{2} - \beta )
$,
$ t_1, t_2 \in (0,T] $
with $ t_1 < t_2 $
it holds that
\begin{equation}
\begin{split}
&
  \left\|
    \int_0^{ t_2 }
      e^{(t_2-s)A}
      \mathbf{B}( s, Y_s )
      \,
    \diffns W_s
    -
    \int_0^{ t_1 }
      e^{ ( t_1 - s ) A }
      \mathbf{B}( s, Y_s )
      \,
    \diffns W_s
  \right\|_{
    L^p(\P;H)
  }
\\ & \leq
  \left[
    \tfrac{
      p \, \left( p - 1 \right)
    }{
      2
    }
    \int_0^{ t_1 }
    \left\|
      \left(
        \operatorname{Id}_H
        - e^{ ( t_2 - t_1 ) A }
      \right)
      e^{ (t_1 - s ) A }
      \mathbf{B}( s , Y_s )
    \right\|_{
      L^p( \P; HS( U, H ) )
    }^2
    \diffns s
  \right]^{ 1/2 }
\\ &
  \quad+
  \left[
    \tfrac{
      p \, \left( p - 1 \right)
    }{
      2
    }
    \int_{ t_1 }^{ t_2 }
    \left\|
      e^{ ( t_2 - s ) A }
      \mathbf{B}( s , Y_s )
    \right\|_{
      L^p( \P; HS( U, H ) )
    }^2
    \diffns s
  \right]^{ 1/2 }
\\& \leq
  \left[
    \tfrac{
      p \, \left( p - 1 \right)
    }{
      2
    }
    \int_{ t_1 }^{ t_2 }
      \left\|
        e^{ ( t_2 - s ) A }
      \right\|_{
        L( H_{ - \beta }, H )
      }^2
      \left\|
        \mathbf{B}( s , Y_s )
      \right\|_{
        L^p( \P; HS( U, H_{ - \beta } ) )
      }^2
    \diffns s
  \right]^{ 1/2 }
\\ &
    \quad+
  \left[
    \tfrac{
      p \, \left( p - 1 \right)
    }{
      2
    }  
    \int_0^{ t_1 }
      \left\|
        e^{ ( t_1 - s ) A }
      \right\|_{
        L( H_{ - \beta }, H_{ \varrho } )
      }^2
      \left\|
        \mathbf{B}( s , Y_s )
      \right\|_{
        L^p( \P; HS( U, H_{ - \beta } ) )
      }^2
    \diffns s
  \right]^{ 1/2 }
\\&\quad\quad\cdot
    \left\|
      \left(
        \operatorname{Id}_H - e^{ ( t_2 - t_1 ) A }
      \right)
    \right\|_{
      L( H_{ \varrho } , H )
    }
  .
\end{split}
\end{equation}
Assumption~\eqref{eq:G_Lipschitz} hence ensures that for all
$
  \varrho \in
  [ 0 , \nicefrac12 - \beta )
$,
$ t_1, t_2 \in (0,T] $
with $ t_1 < t_2 $
it holds that 
\begin{equation}
\begin{split}
\label{eq:Holder_B}
&
  \left\|
    \int_0^{ t_2 }
      e^{(t_2-s)A}
      \mathbf{B}( s, Y_s )
      \,
    \diffns W_s
    -
    \int_0^{ t_1 }
      e^{ ( t_1 - s ) A }
      \mathbf{B}( s, Y_s )
      \,
    \diffns W_s
  \right\|_{
    L^p(\P;H)
  }
\\ &\leq
  \kappa^{
    \varrho, T
  }_{A,\eta}
  \,
  \chi^{
    \varrho + \beta, T
  }_{A,\eta}
  \left|
    t_2 - t_1
  \right|^{
    \varrho
  }
  \bigg[
    \tfrac{
      p \, \left( p - 1 \right)
    }{
      2
    }
    \int_0^{ t_1 }
      \left( t_1 - s \right)^{
        - ( 2 \beta + 2 \varrho )
      }
      \left(
        L_1
        \left\|
          Y_s
        \right\|_{
          \lpn{p}{\P}{H}
        }
        +
        \hat{L}_1
        \,
        s^{
          - \hat{\beta}
        }
      \right)^{ \! 2 }
    \diffns s
  \bigg]^{ 1/2 }
\\ &
  \quad+
  \chi^{
    \beta, T
  }_{A,\eta}
  \bigg[
    \tfrac{
      p \, \left( p - 1 \right)
    }{
      2
    }
    \int_{ t_1 }^{ t_2 }
      \left(
        t_2 - s
      \right)^{
        - 2 \beta
      }
      \left(
        L_1
        \left\|
          Y_s
        \right\|_{
          \lpn{p}{\P}{H}
        }
        +
        \hat{L}_1
        \,
        s^{
          - \hat{\beta}
        }
      \right)^{ \! 2 }
    \diffns s
  \bigg]^{ 1/2 }
\\ &\leq
  \sqrt{
    \tfrac{
      p \, \left( p - 1 \right)
    }{
      2
    }
  }
  \kappa^{
    \varrho, T
  }_{A,\eta}
  \,
  \chi^{
    \varrho + \beta, T
  }_{A,\eta} \,
  ( K L_1 + \hat{L}_1 )\,
  | T \vee 1 |^{ | \lambda-\hat{\beta} | \mathbbm{1}_{(0,\infty)}(L_1) }
  \left|
    t_2 - t_1
  \right|^{
    \varrho
  }
\\&\quad\cdot
  \left[
  \int^{ t_1 }_0
  ( t_1 - s )^{ -( 2\beta + 2\varrho ) }
  s^{ -2 \rho }
  \,\diffns{s}
  \right]^{ 1/2 }
+
  \tfrac{
  \chi^{
    \beta, T
  }_{A,\eta} \,
    | T \vee 1 |^{ | \lambda - \hat{\beta} | \mathbbm{1}_{(0,\infty)}(L_1) }
    \,
    (
      K L_1
      +
      \hat{L}_1
    )
    \sqrt{
    \frac{ p\,(p-1) }{ 2 }
    \left|
      t_2 - t_1
    \right|^{
      ( 1 - 2\beta )
    }
    }
  }{
    \min\{
    |
      t_1
    |^\rho
    ,
    |
      t_2
    |^\rho
    \}
    \sqrt{
      1 -
      2 \beta
    }
  }
\\ &
  \leq
  \bigg[
    \tfrac{
  \kappa^{
    \varrho, T
  }_{A,\eta}
  \,
  \chi^{
    \varrho + \beta, T
  }_{A,\eta} \,
  \left|
    t_2 - t_1
  \right|^{
    \varrho
  }
      \sqrt{
      \mathbb{B}( 1 - 2 \beta - 2 \varrho, 1 - 2 \rho )
      }
    }{
      |t_1|^{ ( \rho + \varrho + \beta - 1/2 ) }
    }
  +
  \tfrac{
  \chi^{
    \beta, T
  }_{A,\eta}\,
    \left|
      t_2 - t_1
    \right|^{
      ( 1/2 - \beta )
    }
  }{
    \min\{
    |
      t_1
    |^\rho
    ,
    |
      t_2
    |^\rho
    \} \,
    \sqrt{1-2\beta}
  }
  \bigg]
\\&\quad\cdot
    \sqrt{\tfrac{ p \, (p-1) }{2}} \,
    | T \vee 1 |^{ | \lambda - \hat{\beta} | \mathbbm{1}_{(0,\infty)}(L_1) }
    \,
    (
      K L_1
      +
      \hat{L}_1
    )
  .
\end{split}
\end{equation}
Combining
\eqref{eq:B_finite2},
\eqref{eq:Holder_B},
and
Lemma~\ref{lem:predictability}
completes the proof
of Lemma~\ref{lem:integrals2}.
\end{proof}

\begin{lemma}
\label{lem:singular.space.completeness}
Let
$
  \left(
    V_k,
    \left\| \cdot \right\|_{V_k}
  \right)
$, 
$ k \in \{0,1\} $, 
be separable $ \R $-Banach spaces 
with $ V_1 \subseteq V_0 $ continuously and densely,  
let 
$ T \in (0,\infty) $, 
$ \lambda \in \R $, 
$ p \in [1,\infty) $, 
let
$
  ( \Omega , \mathcal{F}, \P, ( \mathcal{F}_t )_{ t \in [0,T] } )
$
be a filtered probability space, 
let 
$
  \mathcal{L} \subseteq  
    \mathcal{M}(
      \mathrm{Pred}( ( \mathcal{F}_t )_{ t \in [0,T] } )
      ,
      \mathcal{B}( V_0 )
    )
$ 
be the set given by 
\begin{equation}
\begin{split}
&
  \mathcal{L}
  =
  \left\{
  \substack{
    X \in
    \mathcal{M}(
      \mathrm{Pred}( ( \mathcal{F}_t )_{ t \in [0,T] } )
      ,
      \mathcal{B}( V_0 )
    )
    \colon
    X( (0,T] \times \Omega ) \subseteq V_1,
\\ 
    \| X_0 \|_{
      \mathcal{L}^p( \P; V_0 )
    }
    +
    \sup_{ t \in (0,T] }
    t^{ \lambda }
    \,
    \| X_t \|_{
      \mathcal{L}^p( \P ; V_1 )
    }
    < \infty
  }
  \right\}
  ,
\end{split}
\end{equation}
let 
$
  \left| \cdot \right|_\mathcal{L}
  \colon
  \mathcal{L}
  \to [0,\infty)
$ 
be the mapping which satisfies for all 
$
  X \in
  \mathcal{L}
$
that
\begin{equation}
  \left|
    X
  \right|_\mathcal{L}
  =
  \|X_0\|_{ \mathcal{L}^p( \P ; V_0 ) }
  +
  \sup\nolimits_{ t \in (0,T] }
  \left[
    t^{ \lambda }
    \,
    \|
      X_t
    \|_{ \mathcal{L}^p( \P ; V_1 ) }
  \right]
  ,
\end{equation}
and let 
$ X^N \in \mathcal{L} $, $ N \in \N $, 
satisfy 
$
  \limsup_{ N \to \infty }
  \sup_{ n,m \in \N \cap [N,\infty) }
  | X^n - X^m |_\mathcal{L}
  =0
$. 
Then there exists a $ Y \in \mathcal{L} $ such that 
$
  \limsup_{ N \to \infty }
  | X^N - Y |_\mathcal{L}
  =
  0
$.
\end{lemma}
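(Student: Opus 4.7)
The plan is to emulate the standard proof of completeness of $\lpn{p}{\mu}{V}$ by (i) extracting a subsequence with summable $|\cdot|_\mathcal{L}$-increments, (ii) defining a candidate limit $Y$ via a pointwise-in-$t$ almost sure limit, and (iii) checking that $Y \in \mathcal{L}$ and realizes $|\cdot|_\mathcal{L}$-convergence. The Cauchy hypothesis lets me pick a strictly increasing sequence $(N_k)_{k \in \N} \subseteq \N$ with $\smallsum_{k \in \N} | X^{N_{k+1}} - X^{N_k} |_\mathcal{L} < \infty$. From the definition of $|\cdot|_\mathcal{L}$ I then get $\smallsum_{k \in \N} \| X^{N_{k+1}}_0 - X^{N_k}_0 \|_{\lpn{p}{\P}{V_0}} < \infty$ and, for every fixed $t \in (0,T]$, $\smallsum_{k \in \N} \| X^{N_{k+1}}_t - X^{N_k}_t \|_{\lpn{p}{\P}{V_1}} \leq t^{-\lambda} \smallsum_{k \in \N} | X^{N_{k+1}} - X^{N_k} |_\mathcal{L} < \infty$. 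Applying monotone convergence to the partial sums of $\| X^{N_{k+1}}_t - X^{N_k}_t \|_{V_1}$ (respectively $\| X^{N_{k+1}}_0 - X^{N_k}_0 \|_{V_0}$) then shows that, for every $t \in (0,T]$, $(X^{N_k}_t)_{k \in \N}$ is Cauchy in $V_1$ $\P$-a.s., and $(X^{N_k}_0)_{k \in \N}$ is Cauchy in $V_0$ $\P$-a.s.

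Next I define $Y \colon [0,T] \times \Omega \to V_0$ by setting $Y_0(\omega) = \lim_{k \to \infty} X^{N_k}_0(\omega)$ on the event where this limit exists in $V_0$ and $Y_0(\omega) = 0$ otherwise, and for $t \in (0,T]$ by setting $Y_t(\omega) = \lim_{k \to \infty} X^{N_k}_t(\omega)$ on the set where this limit exists in $V_1$ and $Y_t(\omega) = 0$ otherwise. By Lemma~\ref{lem:Kuratowski} each $X^{N_k}|_{(0,T] \times \Omega}$ is predictable viewed as a $V_1$-valued map, and by completeness and separability of $V_1$ together with, e.g., Exercise~1.74 in Chapter~1 in Hoffmann-J\o{}rgensen~\cite{Hoffmann1994} (exactly as in the proof of Lemma~\ref{lem:predictability}) the set of $(t, \omega) \in (0,T] \times \Omega$ on which $(X^{N_k}_t(\omega))_{k \in \N}$ converges in $V_1$ is predictable and $Y|_{(0,T] \times \Omega}$ is predictable as a $V_1$-valued map. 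A second application of Lemma~\ref{lem:Kuratowski} then yields predictability of $Y$ as a $V_0$-valued process with $Y((0,T] \times \Omega) \subseteq V_1$ pointwise. Moreover, since uniqueness of $L^p$-limits forces the a.s.\ limit to agree with the $\lpn{p}{\P}{V_1}$-limit of $X^{N_k}_t$, we have $X^{N_k}_t \to Y_t$ in $\lpn{p}{\P}{V_1}$ for each $t \in (0,T]$ and $X^{N_k}_0 \to Y_0$ in $\lpn{p}{\P}{V_0}$.

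Finally I verify $\lim_{N \to \infty} |X^N - Y|_\mathcal{L} = 0$ and $Y \in \mathcal{L}$. Fix $\eps \in (0,\infty)$ and pick $N_0 \in \N$ with $|X^n - X^m|_\mathcal{L} < \eps$ for all $n,m \in \N \cap [N_0,\infty)$. For $n \geq N_0$, $t \in (0,T]$ and any $k \in \N$ with $N_k \geq N_0$ one has $t^\lambda \| X^n_t - X^{N_k}_t \|_{\lpn{p}{\P}{V_1}} \leq \eps$, and letting $k \to \infty$ (using $L^p$-convergence established in the previous paragraph) gives $t^\lambda \| X^n_t - Y_t \|_{\lpn{p}{\P}{V_1}} \leq \eps$ for every $t \in (0,T]$, and analogously $\|X^n_0 - Y_0\|_{\lpn{p}{\P}{V_0}} \leq \eps$. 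Taking the supremum over $t$ and summing yields $|X^n - Y|_\mathcal{L} \leq 2\eps$, and the triangle inequality then gives $|Y|_\mathcal{L} \leq |X^{N_0}|_\mathcal{L} + 2\eps < \infty$, so $Y \in \mathcal{L}$. The step requiring the most care is the construction of $Y$ in the second paragraph: the null sets on which $V_1$-Cauchyness of the subsequence fails a priori depend on $t$, and producing a single predictable $V_0$-valued process which lands pointwise in $V_1$ on $(0,T] \times \Omega$ is precisely where Lemma~\ref{lem:Kuratowski} becomes essential.
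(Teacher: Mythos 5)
Your argument is correct and is essentially the proof the paper gives: extract a subsequence with summable $|\cdot|_{\mathcal{L}}$-increments, build a pointwise a.s.\ limit, establish predictability via Lemma~\ref{lem:Kuratowski} together with the cited Hoffmann-J\o{}rgensen exercise, identify the a.s.\ limit with the $\mathcal{L}^p$-limits at each fixed $t$, and conclude by the standard $\eps$-argument. The only (immaterial) difference is that you take the pointwise limit directly in $V_1$ on $(0,T]\times\Omega$, whereas the paper takes the pointwise limit in $V_0$ and then forces the range into $V_1$ by post-composing with the Borel map $V_0 \ni x\mapsto \1_{V_1}(x)\,x \in V_0$, whose measurability is what Lemma~\ref{lem:Kuratowski} is used for there.
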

\begin{proof}
Throughout this proof 
let $ N_k \in \N $, $ k \in \N $, 
be a strictly increasing sequence such that 
for all $ k \in \N $ it holds that 
$
  | X^{N_{k+1}} - X^{N_k} |_\mathcal{L}
  < \frac{1}{2^k}
$, 
let $ \mathcal{Y} \colon [0,T] \times \Omega \to V_0 $ 
be the mapping with the property for all 
$ (t,\omega) \in [0,T] \times \Omega $ 
it holds that 
\begin{equation}
  \mathcal{Y}_t( \omega )
  =
  \begin{cases}
    \lim_{ k \to \infty }
    X^{ N_k }_t( \omega )
  &
    \colon
    ( X^{ N_k }_t( \omega ) )_{ k \in \N }
    \text{ is convergent in } V_0
  \\
    0
  &
    \colon
    \text{else}
  \end{cases}
  ,
\end{equation}
let $ \phi \colon V_0 \to V_0 $ 
be the mapping with the property that for all 
$ x \in V_0 $ 
it holds that 
$
  \phi(x)
  =
  \1_{V_1}(x) \cdot x
$, 
and let 
$
  Y \colon [0,T] \times \Omega \to V_0
$ 
be the mapping with the property for all 
$ (t,\omega) \in [0,T] \times \Omega $ 
it holds that 
$
  Y_t(\omega)
  =
  \phi\big(
    \1_{ (0,T] \times \Omega }(t,\omega) \cdot \mathcal{Y}_t(\omega)
  \big)
  +
  \1_{ \{0\} \times \Omega }(t,\omega) \cdot \mathcal{Y}_0(\omega)
$. 
The assumption that 
$ 
  \forall \, N \in \N \colon 
  X^N \in 
    \mathcal{M}(
      \mathrm{Pred}( ( \mathcal{F}_t )_{ t \in [0,T] } )
      ,
      \mathcal{B}( V_0 )
    )
$ 
and, e.g., Exercise~1.74 in Chapter~1 in Hoffmann-J\o{}rgensen~\cite{Hoffmann1994}
imply that 
$
  \big\{
    ( t, \omega ) \in [0,T] \times \Omega
    \colon
    ( X^{ N_k }_t( \omega ) )_{ k \in \N }
    \text{ is convergent in } V_0
  \big\}
  \in
  \mathrm{Pred}( ( \mathcal{F}_t )_{ t \in [0,T] } )
$. 
This together with the assumption that 
$ 
  \forall \, N \in \N \colon 
  X^N \in 
    \mathcal{M}(
      \mathrm{Pred}( ( \mathcal{F}_t )_{ t \in [0,T] } )
      ,
      \mathcal{B}( V_0 )
    )
$ 
and, e.g., Exercise~1.74 in Chapter~1 in Hoffmann-J\o{}rgensen~\cite{Hoffmann1994}
ensure that 
$
  \mathcal{Y} \in
    \mathcal{M}(
      \mathrm{Pred}( ( \mathcal{F}_t )_{ t \in [0,T] } )
      ,
      \mathcal{B}( V_0 )
    )
$.
Furthermore, observe that, e.g., Lemma~\ref{lem:Kuratowski} and the fact that 
\begin{equation}
  \forall \, A \in \mathcal{B}(V_0) \colon
  \phi^{-1}(A)
  =
  \phi^{-1}(A \cap V_1)
  =
  \begin{cases}
    A \cap V_1
  &
    \colon
    0 \notin A
  \\
    ( V_0 \setminus V_1) \cup ( A \cap V_1 )
  &
    \colon
    0 \in A
  \end{cases}  
\end{equation}
ensure that 
$
  \phi \in \mathcal{M}( \mathcal{B}(V_0), 
  \mathcal{B}(V_0) )
$. 
Combining this with the fact that 
$
  \mathcal{Y} \in 
    \mathcal{M}(
      \mathrm{Pred}( ( \mathcal{F}_t )_{ t \in [0,T] } )
      ,
$
$
      \mathcal{B}( V_0 )
    )
$
establishes that 
$
  Y \in 
    \mathcal{M}(
      \mathrm{Pred}( ( \mathcal{F}_t )_{ t \in [0,T] } )
      ,
      \mathcal{B}( V_0 )
    )
$
and 
$
  Y( (0,T] \times \Omega ) \subseteq V_1
$.
In the next step we note that the assumption that 
$
  \limsup_{ N \to \infty }
  \sup_{ n,m \in \N \cap [N,\infty) }
  | X^n - X^m |_\mathcal{L}
  =0
$ 
shows that for all $ t \in [0,T] $ it holds that 
$
  \limsup_{ N \to \infty }
  \sup_{ n, m \in \N \cap [N,\infty) }
  \| X^n_t - X^m_t \|_{ \mathcal{L}^p( \P; V_{\1_{(0,T]}(t)} ) }
  =0
$. 
Hence, we obtain for every $ t \in [0,T] $ that there exists a 
$
  \mathfrak{Y}_t \in
  \mathcal{L}^p( \P\vert_{\mathcal{F}_t} ; V_{\1_{(0,T]}(t)} )
$ 
such that 
$
  \limsup_{ N \to \infty }
  \| X^N_t - \mathfrak{Y}_t \|_{ \mathcal{L}^p( \P; V_{\1_{(0,T]}(t)} ) }
  =0
$.
The fact that 
$
  \forall \, k \in \N
  \colon
  | X^{N_{k+1}} - X^{N_k} |_\mathcal{L}
  < \frac{1}{2^k}  
$ 
therefore proves that for every $ t \in [0,T] $ there exists a 
$
  \mathfrak{Y}_t \in
  \mathcal{L}^p( \P\vert_{\mathcal{F}_t} ; V_{\1_{(0,T]}(t)} )
$ 
such that for all $ n \in \N $ it holds that 
\begin{equation}
\begin{split}
&
  \|
  \mathfrak{Y}_t - X^{N_n}_t  
  \|_{ \mathcal{L}^p( \P; V_{\1_{(0,T]}(t)} ) }
\\&\leq
  \limsup_{ m \to \infty }
  \big(
    \|
    \mathfrak{Y}_t - X^{N_m}_t  
    \|_{ \mathcal{L}^p( \P; V_{\1_{(0,T]}(t)} ) }
    +
    \|
    X^{N_m}_t - X^{N_n}_t  
    \|_{ \mathcal{L}^p( \P; V_{\1_{(0,T]}(t)} ) }
  \big)
\\&=
  \limsup_{ m \to \infty }
  \big\|
    \smallsum^{m-1}_{ k=n }
    \big( X^{N_{k+1}}_t - X^{N_k}_t \big) 
  \big\|_{ \mathcal{L}^p( \P; V_{\1_{(0,T]}(t)} ) }
\\&\leq
  \sum^\infty_{ k=n }
  \|
    X^{N_{k+1}}_t - X^{N_k}_t 
  \|_{ \mathcal{L}^p( \P; V_{\1_{(0,T]}(t)} ) }
\\&\leq
  \sum^\infty_{ k=n }
  t^{ -\1_{(0,T]}(t) \cdot \lambda } \,
  | X^{N_{k+1}}_t - X^{N_k}_t |_{\mathcal{L}}  
  \leq
  t^{ -\1_{(0,T]}(t) \cdot \lambda }
  \left(
  \sum^\infty_{ k=n }
  \frac{1}{2^k}
  \right)
  =
  t^{ -\1_{(0,T]}(t) \cdot \lambda } \,
  2^{ (1-n) }
  .    
\end{split}
\end{equation}
This and, e.g., item~(ii) of Theorem~6.12 in Klenke~\cite{Klenke2008} 
assure that for every $ t \in [0,T] $ there exists a 
$
  \mathfrak{Y}_t \in
  \mathcal{L}^p( \P\vert_{\mathcal{F}_t} ; V_{\1_{(0,T]}(t)} )
$ 
such that for all $ n \in \N $ it holds that 
$
  \|
    \mathfrak{Y}_t - X^{N_n}_t
  \|_{ \mathcal{L}^p( \P; V_{\1_{(0,T]}(t)} ) }
  \leq
  t^{ -\1_{(0,T]}(t) \cdot \lambda } \,
  2^{ (1-n) }  
$ 
and 
$
  \P(
    \cap_{ k \in \N } \cup_{ M \in \N } 
    \cap_{ m \in \N \cap [M,\infty) } 
    \{
    \| \mathfrak{Y}_t - X^{N_m}_t \|_{ V_{\1_{(0,T]}(t)} }
    < \frac{1}{k}
    \}
  )
  =
  \P(
    \limsup_{ m \to \infty }
    \| \mathfrak{Y}_t - X^{N_m}_t \|_{ V_{\1_{(0,T]}(t)} }
    = 0
  )
  = 1
  .
$
The assumption that 
$ V_1 \subseteq V_0 $ continuously 
hence ensures that for all $ t \in [0,T] $, $ n \in \N $ 
it holds that 
$
  \|
    Y_t - X^{N_n}_t
  \|_{ \mathcal{L}^p( \P; V_{\1_{(0,T]}(t)} ) }
  \leq
  t^{ -\1_{(0,T]}(t) \cdot \lambda } \,
  2^{ (1-n) }  
$. 
This shows that for all $ n \in \N $ it holds that 
$
  \|
    Y_0 - X^{N_n}_0
  \|_{ \mathcal{L}^p( \P; V_0 ) }
  +
  \sup_{ t \in (0,T] }
  [
  t^\lambda \,
  \|
    Y_t - X^{N_n}_t
  \|_{ \mathcal{L}^p( \P; V_1 ) } 
  ]
  \leq
  2^{(2-n)}
$.
Therefore, we get that for all $ n \in \N $ 
it holds that $ Y - X^{N_n} \in \mathcal{L} $ 
and 
$
  | Y - X^{N_n} |_{\mathcal{L}}
  \leq
  2^{(2-n)}
$. 
Hence, we obtain that $ Y \in \mathcal{L} $ 
and 
$
  \limsup_{n\to\infty}
  | Y - X^{N_n} |_{\mathcal{L}}  
  = 0
$.
This completes the proof of Lemma~\ref{lem:singular.space.completeness}.
\end{proof}

\subsection{A perturbation estimate for stochastic processes}
\label{sec:perturbation}

Lemma~\ref{lem:gronwall} is a consequence of the generalized Gronwall inequality from Lemma~7.1.1 in Chapter~7 in Henry~\cite{h81} 
(cf. also Exercise~4 in Chapter~7 in Henry~\cite{h81}).

\begin{lemma}
\label{lem:gronwall}
Let $ \alpha, \beta \in (-\infty,1) $,
$ a, b \in [0,\infty) $,
$ T \in (0,\infty) $,
$
  e \in
  \mathcal{M}\big(
    \mathcal{B}( [0,T] ) ,
    \mathcal{B}( [0,\infty] )
  \big)
$
satisfy for all
$ t \in (0,T] $	that  
$
  \int_0^T e(s) \, \diffns s < \infty
$
and 
$
  e(t) \leq
  \frac{ a }{ t^{ \alpha } }
  +
  \int_0^t
  \frac{ b \, e(s) }{ \left( t - s \right)^{ \beta } }
  \,
  \diffns s
$.
Then for all
$ t \in (0,T] $
it holds that
$
  e(t)
  \leq
  \frac{ a }{ t^{ \alpha } }
  \mathrm{E}_{ \alpha, \beta }\!\left[
    b \, t^{ (1-\beta) }
  \right]
$.
\end{lemma}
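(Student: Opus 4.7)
The approach is to iterate the integral inequality $e \leq f + \mathcal{T} e$, where $\mathcal{T}$ is the linear operator acting on nonnegative measurable functions by $(\mathcal{T} g)(t) = b \int_0^t (t-s)^{-\beta} g(s) \, ds$ and $f(t) = a\, t^{-\alpha}$. By a straightforward induction on $n$ one obtains
\begin{equation*}
  e(t) \leq \smallsum_{k=0}^{n-1} (\mathcal{T}^k f)(t) + (\mathcal{T}^n e)(t)
  \qquad \text{for all } t \in (0,T], \; n \in \N.
\end{equation*}
The proof then splits into two tasks: (a) computing the series $\sum_{k=0}^{\infty} (\mathcal{T}^k f)(t)$ in closed form and identifying it with $\tfrac{a}{t^\alpha} \mathrm{E}_{\alpha,\beta}[b\, t^{1-\beta}]$, and (b) showing that the remainder $(\mathcal{T}^n e)(t)$ tends to $0$ as $n \to \infty$.

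For task~(a) I would use the Beta function identity $\int_0^t (t-s)^{-\beta} s^{-\gamma}\, ds = t^{1-\beta-\gamma}\, \mathbb{B}(1-\beta, 1-\gamma)$, valid for $\beta,\gamma < 1$, together with an induction to obtain
\begin{equation*}
  (\mathcal{T}^k f)(t) = a\, b^k\, t^{k(1-\beta) - \alpha} \smallsum_{j=0}^{k-1} \, \text{(nothing)} \cdot \prod_{j=0}^{k-1} \mathbb{B}\bigl(1-\beta,\; j(1-\beta) + 1-\alpha\bigr).
\end{equation*}
Summing over $k \geq 0$ and factoring out $a\, t^{-\alpha}$ then matches the series defining $\mathrm{E}_{\alpha,\beta}[b\, t^{1-\beta}]$ exactly, which is a routine rearrangement.

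The main obstacle is task~(b), controlling the remainder. My plan is to use Fubini to express $(\mathcal{T}^n e)(t) = b^n \int_0^t (g^{*n})(t-s)\, e(s)\, ds$, where $g(u) = u^{-\beta}$ and $g^{*n}$ denotes its $n$-fold convolution. Iterating the Beta identity above yields the standard formula $g^{*n}(u) = u^{n(1-\beta)-1}\, \Gamma(1-\beta)^n / \Gamma(n(1-\beta))$, whence
\begin{equation*}
  (\mathcal{T}^n e)(t) \leq \frac{b^n\, \Gamma(1-\beta)^n}{\Gamma(n(1-\beta))} \, \max\{1, T^{n(1-\beta)-1}\} \int_0^T e(s)\, ds.
\end{equation*}
Because $\Gamma(n(1-\beta))$ grows super-exponentially in $n$ while the remaining prefactor grows only exponentially, and because $\int_0^T e(s)\, ds < \infty$ by hypothesis, the right-hand side vanishes as $n \to \infty$. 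Passing to the limit in the iteration bound then yields the desired estimate. Alternatively, the statement is literally the content of Lemma~7.1.1 in Chapter~7 of Henry~\cite{h81} (cited explicitly in the statement), so one may instead simply verify that the present hypotheses match Henry's and invoke that result directly, which is presumably what the authors intend.
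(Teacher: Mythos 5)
The paper offers no proof of this lemma at all: it simply remarks that the statement ``is a consequence of the generalized Gronwall inequality from Lemma~7.1.1 in Chapter~7 in Henry'' and moves on, which is precisely the fallback you identify at the end of your proposal. Your primary argument is the standard proof of that cited result, and it is essentially correct: the iteration $e \leq \sum_{k=0}^{n-1} \mathcal{T}^k f + \mathcal{T}^n e$ is justified by the positivity and monotonicity of $\mathcal{T}$, your closed form for $\mathcal{T}^k f$ reproduces exactly the product $\prod_{k=0}^{n-1} \mathbb{B}(1-\beta, k(1-\beta)+1-\alpha)$ in the paper's definition of $\mathrm{E}_{\alpha,\beta}$, and the remainder estimate via $g^{*n}(u) = u^{n(1-\beta)-1}\,\Gamma(1-\beta)^n/\Gamma(n(1-\beta))$ together with $\int_0^T e(s)\,ds < \infty$ is the right mechanism. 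One small inaccuracy: the bound $\sup_{0<u\leq T} u^{n(1-\beta)-1} \leq \max\{1, T^{n(1-\beta)-1}\}$ fails when $n(1-\beta) < 1$, since the kernel is then unbounded near $u=0$; but as $\beta < 1$ this affects only finitely many $n$, so the limit $n \to \infty$ (which is all you need) is unharmed. You should also note in passing that $e < \infty$ almost everywhere (from integrability) so that substituting the hypothesis inside the integral is legitimate, and that the series defining $\mathrm{E}_{\alpha,\beta}[x]$ converges for every $x \geq 0$ because the Beta factors decay like $k^{-(1-\beta)}$; both are routine.
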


In the next result, Proposition~\ref{prop:perturbation_estimate}, 
we prove a strong perturbation result that
will be used several times throughout the paper. 
We refer to~\eqref{eq:BigTheta} in Subsection~\ref{sec:notation} above for the introduction of the real numbers 
$
  \Theta^{\alpha,\beta,\lambda}_{ A, \eta, p, T }(L_0,L_1)
$
appearing on the right hand side of inequality~\eqref{eq:pertubation1} 
in Proposition~\ref{prop:perturbation_estimate}.

\begin{prop}[Perturbation estimate]
\label{prop:perturbation_estimate}
Assume the setting in Section \ref{sec:SPDE_setting}, 
let $ \delta \in \R $,  
and let 
$ Y^1 , Y^2 \colon [0,T] \times \Omega \to H_{\delta} $ 
be $ ( \mathcal{F}_t )_{ t \in [0,T] } $-predictable
stochastic processes 
which satisfy 
$ 
  \cup_{ k \in \{ 1, 2 \} }
  Y^k ( (0,T] \times \Omega ) \subseteq  H 
$
 and
\begin{equation}
\label{eq:perturb.condition.II}
   \limsup_{ 
     \lambda \nearrow 
     \frac{
       1
     }{ 2 }
       [
         1 +
         \1_{ \{ 0 \} }( L_1 )
       ] 
   }
   \max_{ k \in \{ 1, 2 \} }
   \sup_{ t \in ( 0 , T ] }
   t^{ \lambda }
   \,
   \|
     Y_t^k
   \|_{
     L^p ( \P ; H )
   }
   < \infty
   .
\end{equation}
Then 
\begin{enumerate}[(i)]
\item
it holds for all 
$ t \in [0,T] $ 
that
\begin{equation}
  \P
  \big(
  \smallsum^2_{ k=1 }
  \int_0^t
  \|
    e^{  (t - s) A}
    \mathbf{F}( s , Y^k_s )
  \|_H
  +
  \|
    e^{  (t - s) A}
    \mathbf{B}( s , Y^k_s )
  \|_{ HS( U, H ) }^2
  \,
  \diffns s
  < \infty
  \big)
  =1
\end{equation}
and 
\item
it holds for all 
$
  \lambda \in 
  \big( 
    - \infty, \frac{1}{2}[ 1  + \mathbbm{1}_{\{0\}}( L_1 ) ] 
  \big)
$
that 
\begin{equation}
\begin{split}
\label{eq:pertubation1}
&
  \sup_{
    t \in (0,T]
  }
  \left[
  t^{ \lambda }
  \,
  \|
    Y_t^1 - Y_t^2
  \|_{
    L^p ( \P ; H )
  }
  \right]
\leq
  \Theta_{ A , \eta , p , T }^{ \alpha , \beta , \lambda }( L_0, L_1 )
\\ &
  \cdot
  \sup_{ t \in ( 0 , T ] }
  \bigg[
  t^{ \lambda }
  \,
  \bigg\|
    Y_t^1
    -
    \int_0^t
      e^{(t-s)A}
      \mathbf{F}(s,Y_s^1)
    \, \diffns s
    -
    \int_0^t
      e^{(t-s)A}
      \mathbf{B}(s,Y_s^1)
    \, \diffns W_s\\
&   +
    \int_0^t
      e^{(t-s)A}
      \mathbf{F}(s,Y_s^2)
    \, \diffns s
    +
    \int_0^t
      e^{(t-s)A}
      \mathbf{B}(s,Y_s^2)
    \, \diffns W_s
    -
    Y_t^2
  \bigg\|_{
    L^p ( \P ; H )
  }
  \bigg]
  .
\end{split}
\end{equation}
\end{enumerate}
\end{prop}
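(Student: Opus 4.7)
The proof should split into two parts matching (i) and (ii). For (i), the finiteness of the two integrals follows immediately from item~\ref{item:integral.defined} of Lemma~\ref{lem:integrals} and item~\ref{item:stoch.integral.defined} of Lemma~\ref{lem:integrals2}, applied separately to each $Y^k$, $k \in \{1,2\}$. Hypothesis~\eqref{eq:perturb.condition.II} supplies an admissible weight $\lambda' < \tfrac{1}{2}[1+\mathbbm{1}_{\{0\}}(L_1)]$ for which $\sup_{t\in(0,T]} t^{\lambda'}\|Y_t^k\|_{L^p(\P;H)} < \infty$, placing each $Y^k$ in the setting of those lemmas (note that this weight satisfies both $\lambda' < 1$ for Lemma~\ref{lem:integrals} and $\lambda' \mathbbm{1}_{(0,\infty)}(L_1) < \tfrac{1}{2}$ for Lemma~\ref{lem:integrals2}).

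For (ii), I would introduce the defect process
\begin{equation*}
  R_t := Y_t^1 - Y_t^2
  - \int_0^t e^{(t-s)A}\bigl(\mathbf{F}(s,Y_s^1) - \mathbf{F}(s,Y_s^2)\bigr) \diffns s
  - \int_0^t e^{(t-s)A}\bigl(\mathbf{B}(s,Y_s^1) - \mathbf{B}(s,Y_s^2)\bigr) \diffns W_s,
\end{equation*}
so that $\|R_t\|_{L^p(\P;H)}$ equals the quantity being supremized on the right-hand side of~\eqref{eq:pertubation1}. Writing $u(t) := \|Y_t^1 - Y_t^2\|_{L^p(\P;H)}$, the triangle inequality combined with the Lipschitz hypotheses~\eqref{eq:F_Lipschitz}--\eqref{eq:G_Lipschitz}, the bound $\|(\eta-A)^\gamma e^{tA}\|_{L(H)} \leq \chi^{\gamma,T}_{A,\eta}\,t^{-\gamma}$, and the Burkholder--Davis--Gundy-type estimate from Lemma~7.7 of Da Prato \& Zabczyk~\cite{dz92} yields
\begin{equation*}
  u(t) \leq \|R_t\|_{L^p(\P;H)} + \chi^{\alpha,T}_{A,\eta} L_0 \int_0^t (t-s)^{-\alpha} u(s)\,\diffns s + \chi^{\beta,T}_{A,\eta} L_1 \sqrt{\tfrac{p(p-1)}{2}}\, \bigg(\int_0^t (t-s)^{-2\beta} u(s)^2 \,\diffns s\bigg)^{\!1/2}.
\end{equation*}

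Next I would case-split on $\kappa = \mathbbm{1}_{(0,\infty)}(L_1)$. If $L_1 = 0$ the diffusion term vanishes and Lemma~\ref{lem:gronwall} applied to $e(t) = u(t)$ with the parameters $\lambda$ and $\alpha$ yields $t^\lambda u(t) \leq a \cdot E_{\lambda,\alpha}[\chi^{\alpha,T}_{A,\eta} L_0\, T^{1-\alpha}]$ with $a := \sup_{t \in (0,T]} t^\lambda \|R_t\|_{L^p(\P;H)}$, matching the second branch of~\eqref{eq:BigTheta}. If $L_1 > 0$, I would square the inequality, use $(x+y+z)^2 \leq 2x^2 + 2(y+z)^2$ to isolate $R_t$, and then route the drift and diffusion contributions through the common kernel $(t-s)^{-\gamma^*}$ with $\gamma^* := \max\{\alpha, 2\beta\}$---the drift via Cauchy--Schwarz with weight exponents $(\alpha - \gamma^*/2, \gamma^*/2)$, the diffusion via the monotonicity $\int_0^t (t-s)^{-2\beta} u^2\,\diffns s \leq T^{\gamma^* - 2\beta} \int_0^t (t-s)^{-\gamma^*} u^2\,\diffns s$. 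This produces
\begin{equation*}
  u(t)^2 \leq 2\|R_t\|_{L^p(\P;H)}^2 + (C_1+C_2)^2\, T^{\gamma^*-1}\int_0^t (t-s)^{-\gamma^*} u(s)^2\,\diffns s
\end{equation*}
with $C_1 = \chi^{\alpha,T}_{A,\eta} L_0 \sqrt{2}\, T^{1-\alpha}/\sqrt{1-\alpha}$ and $C_2 = \chi^{\beta,T}_{A,\eta} L_1 \sqrt{p(p-1)\,T^{1-2\beta}}$. Then Lemma~\ref{lem:gronwall} applied to $e(t) = u(t)^2$ with parameters $2\lambda$ and $\gamma^*$, followed by taking square roots and supremum over $t \in (0,T]$, produces the first branch of~\eqref{eq:BigTheta}.

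The principal delicacy is twofold. First, the integrability hypothesis $\int_0^T u(s)^{1+\kappa}\,\diffns s < \infty$ required by Lemma~\ref{lem:gronwall} must be verified; this is precisely where~\eqref{eq:perturb.condition.II} is used, since it forces $u(s) \lesssim s^{-\lambda'}$ with $(1+\kappa)\lambda' < 1$. Second, obtaining the constant $(C_1+C_2)^2$ rather than the looser $2(C_1^2+C_2^2)$ requires first bounding each of the drift and diffusion contributions individually by expressions of the form $D_i \sqrt{\int_0^t (t-s)^{-\gamma^*} u^2\,\diffns s}$ and only \emph{then} expanding $(D_1+D_2)^2$---the ordering of Cauchy--Schwarz steps is precisely what produces the cross term with the correct coefficient and routes all $T$-dependence through $\Theta^{\alpha,\beta,\lambda}_{A,\eta,p,T}$ cleanly.
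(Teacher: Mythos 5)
Your proposal is correct and follows essentially the same route as the paper's proof: item (i) via Lemmas~\ref{lem:integrals} and~\ref{lem:integrals2}, then the triangle inequality plus the Burkholder--Davis--Gundy-type estimate to reach the integral inequality for $u(t)$, a case split on $\1_{(0,\infty)}(L_1)$, and in the case $L_1>0$ the squaring with H\"older/Cauchy--Schwarz routing both terms through the common kernel $(t-s)^{-\max\{\alpha,2\beta\}}$ before applying Lemma~\ref{lem:gronwall} to $u^2$ with parameters $2\lambda$ and $\max\{\alpha,2\beta\}$. Your two flagged delicacies (the integrability hypothesis of the Gronwall lemma supplied by~\eqref{eq:perturb.condition.II}, and the ordering of estimates needed to produce the squared sum $(C_1+C_2)^2$) are exactly the points the paper's computation handles.
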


\begin{proof}
Throughout this proof let 
$
  r \in ( -\infty, \frac{1}{2}[ 1  + \mathbbm{1}_{\{0\}}( L_1 ) ] )
$
and let 
$ \perturb \in [0,\infty] $ 
be the extended real number given by 
\begin{equation}
\begin{split}
&
  \perturb=
  \sup_{ t \in ( 0 , T ] }
  \bigg[
  t^r
  \,
  \bigg\|
    Y_t^1
    -
    \int_0^t
      e^{(t-s)A}
      \mathbf{F}(s,Y_s^1)
    \, \diffns s
    -
    \int_0^t
      e^{(t-s)A}
      \mathbf{B}(s,Y_s^1)
    \, \diffns W_s\\
&   +
    \int_0^t
      e^{(t-s)A}
      \mathbf{F}(s,Y_s^2)
    \, \diffns s
    +
    \int_0^t
      e^{(t-s)A}
      \mathbf{B}(s,Y_s^2)
    \, \diffns W_s
    -
    Y_t^2
  \bigg\|_{
    L^p ( \P ; H )
  }
  \bigg]
  .
\end{split}
\end{equation}
We observe that item~\eqref{item:integral.defined} of 
Lemma~\ref{lem:integrals}
and item~\eqref{item:stoch.integral.defined} of Lemma~\ref{lem:integrals2}
establish that for all 
$ t \in [0,T] $ 
it holds that 
$
  \P
  \big(
  \sum^2_{ k=1 }
  \int_0^t
  \|
    e^{  (t - s) A}
    \mathbf{F}( s , Y^k_s )
  \|_H
  +
  \|
    e^{  (t - s) A}
    \mathbf{B}( s , Y^k_s )
  \|_{ HS( U, H ) }^2
  \,
  \diffns s
  < \infty
  \big)
  =1
$. 
It thus remains to prove~\eqref{eq:pertubation1}.
For this we assume without loss of generality in the following that 
$ \perturb < \infty $.
Next we note that the triangle inequality shows that
for all $ t \in ( 0 , T ] $ it holds that
\begin{equation}
\begin{split}
  \big\|
    Y_t^1 - Y_t^2
  \big\|_{
    L^p ( \P ; H )
  }
& \leq
  \bigg\|
    Y_t^1
    -
    \int_0^t
      e^{(t-s)A}
      \mathbf{F}(s,Y_s^1)
    \, \diffns s
    -
    \int_0^t
      e^{(t-s)A}
      \mathbf{B}(s,Y_s^1)
    \, \diffns W_s
\\
&
\quad
  +
    \int_0^t
      e^{ ( t - s ) A }
      \mathbf{ F } ( s , Y_s^2 )
    \, \diffns s
    +
    \int_0^t
      e^{ ( t - s ) A }
      \mathbf{B} ( s , Y_s^2 )
    \, \diffns W_s
    -
    Y_t^2
  \bigg\|_{
    L^p( \P ; H )
  }
\\ & \quad +
  \left\|
    \int_0^t
      e^{ ( t - s ) A }
      \big(
        \mathbf{F} ( s , Y_s^1 )
        -
        \mathbf{F} ( s , Y_s^2 )
      \big)
    \,
    \diffns s
  \right\|_{
    L^p(\P;H)
  }
\\ & \quad +
  \left\|
    \int_0^t
      e^{(t-s)A}
      \big(
        \mathbf{B} ( s , Y_s^1 )
        -
        \mathbf{B} ( s , Y_s^2 )
      \big)
    \,\diffns W_s
  \right\|_{
    L^p(\P;H)
  }.
\end{split}
\end{equation}
This and the Burkholder-Davis-Gundy type inequality in
Lemma~7.7 in Da Prato \& Zabczyk~\cite{dz92} 
imply that for all $ t \in (0,T] $ it holds that
\begin{equation}
\label{eq:perturbation.intermediate}
\begin{split}
& \big\|
    Y_t^1 - Y_t^2
  \big\|_{
    L^p ( \P ; H )
  }
\leq
  t^{ - r } \,
  \perturb
+
  \chi_{ A , \eta }^{ \alpha , T }
  \,
  L_0
  \int_0^t
    ( t - s )^{ - \alpha }
    \,
    \|
      Y_s^1
      -
      Y_s^2
    \|_{
      L^p( \P ; H )
    }
    \,\diffns s
\\ & +
  \chi_{ A , \eta }^{ \beta , T }\,
  L_1
  \bigg[
    \tfrac{
      p \, ( p - 1 )
    }{
      2
    }
    \int_0^t
      \,
      (t-s)^{ - 2 \beta }
      \,
      \|
        Y_s^1
        -
        Y_s^2
      \|_{
        L^p( \P ; H )
      }^2
    \,\diffns s
  \bigg]^{ 1/2 }
  .
\end{split}
\end{equation}
Combining this with Lemma~\ref{lem:gronwall} proves~\eqref{eq:pertubation1}
in the case $ L_1 = 0 $.
It thus remains to prove~\eqref{eq:pertubation1}
in the case $ L_1 > 0 $.
For this we observe that~\eqref{eq:perturbation.intermediate} together with H\"{o}lder's inequality ensures
that for all $ t \in (0,T] $ it holds that
\begin{equation}
\begin{split}
&
  \big\|
    Y_t^1 - Y_t^2
  \big\|_{
    L^p ( \P ; H )
  }
\leq
  t^{ - r } \,
  \perturb
\\&+
  \chi_{ A , \eta }^{ \alpha , T }
  \,
  L_0
  \bigg[
    T^{
      \max\{
        2 \beta - \alpha , 0
      \}
    }
    \int_0^t
      (t - s)^{ - \alpha }
    \, \diffns s
    \int_0^t
      (t - s)^{
        - \max\{ \alpha , 2 \beta \}
      }
      \,
      \|
        Y_s^1
        -
        Y_s^2
      \|_{
        L^p( \P ; H )
      }^2
      \, \diffns s
    \bigg]^{ 1/2 }
\\ & +
  \chi_{ A, \eta }^{ \beta, T }
  \,
  L_1
  \bigg[
    \tfrac{
      p \, ( p - 1 )
    }{
      2
    }
    \,
    T^{
      \max\{ \alpha - 2 \beta , 0 \}
    }
    \int_0^t
      (t - s)^{
        - \max\{ \alpha , 2 \beta \}
      }
      \,
      \|
        Y_s^1
        -
        Y_s^2
      \|_{
        L^p( \P ; H )
      }^2
    \,
    \diffns s
  \bigg]^{1/2}
  .
\end{split}
\end{equation}
The fact that
$
  \forall \, a, b \in \R
  \colon
  ( a + b )^2 \leq 2 a^2 + 2 b^2
$
hence yields that for all $ t \in (0,T] $ it holds
that
\begin{equation}
\begin{split}
&
  \|
    Y_t^1 - Y_t^2
  \|_{
    L^p ( \P ; H )
  }^2
\leq
  \frac{
    2
  }{
    t^{ 2 r }
  } \,
  |\perturb|^2
  +
  \int_0^t
    ( t - s )^{
      - \max\{ \alpha , 2 \beta \}
    }
    \,
    \|
      Y_s^1
      -
      Y_s^2
    \|_{
      L^p( \P ; H )
    }^2
    \,\diffns s
\\ & \cdot
  \Big[
    \chi_{ A , \eta }^{ \alpha , T }
    \,
    L_0
    \,
    \tfrac{
      \sqrt{2}
      \,
      T^{
        1 / 2
        -
        \alpha
        +
        \max\{ \beta , \alpha / 2 \}
      }
    }{
      \sqrt{ 1 - \alpha }
    }
    +
    \chi_{ A , \eta }^{ \beta , T }
    \,
    L_1
    \,
    \sqrt{ p ( p - 1 ) }
    T^{ \max \{ \alpha/2 , \beta \} - \beta }
  \Big]^2
  .
\end{split}
\end{equation}
Combining this with Lemma \ref{lem:gronwall}
and the fact that
\begin{equation}
\begin{split}
&
  E_{ 2 r , \max\{ \alpha , 2 \beta \} }\!\left[
    T^{
      (1 - \max\{ \alpha , 2 \beta \})
    }
    \left|
      \chi_{ A , \eta }^{ \alpha , T }
      \,
      L_0
      \,
      \tfrac{
        \sqrt{2}\,
        T^{ 1 / 2
        -
        \alpha
        +
        \max\{ \beta , \alpha / 2 \}}
      }{
        \sqrt{ 1 - \alpha }
      }
      +
      \chi_{A,\eta}^{\beta,T}
      \,
      L_1
      \sqrt{ p \, ( p - 1 ) }
      \,
      T^{
        \max\{ \alpha / 2 , \beta \} - \beta
      }
    \right|^2
  \right]
\\ & =
  E_{
    2 r , \max\{ \alpha , 2 \beta \}
  }\!\left[
    \left|
      \chi_{ A , \eta }^{ \alpha , T }
      \,
      L_0
      \,
      \tfrac{
        \sqrt{2}\,
        T^{ (1 - \alpha) }
      }{
        \sqrt{ 1 - \alpha }
      }
      +
      \chi_{A,\eta}^{\beta,T}
      \,
      L_1
      \sqrt{ p \, ( p - 1 ) \, T^{ (1 - 2\beta) } }
    \right|^2
  \right]
\end{split}
\end{equation}
ensures that for all $ t \in (0,T] $ it holds that 
\begin{equation}
\begin{split}
&
  \|
    Y_t^1 - Y_t^2
  \|_{
    L^p ( \P ; H )
  }^2
\\&\leq
  \frac{
    2
  }{
    t^{ 2 r }
  } \,
  |\perturb|^2 \,
  E_{
    2 r , \max\{ \alpha , 2 \beta \}
  }\!\left[
    \left|
      \chi_{ A , \eta }^{ \alpha , T }
      \,
      L_0
      \,
      \tfrac{
        \sqrt{2}\,
        T^{ (1 - \alpha) }
      }{
        \sqrt{ 1 - \alpha }
      }
      +
      \chi_{A,\eta}^{\beta,T}
      \,
      L_1
      \sqrt{ p \, ( p - 1 ) \, T^{ (1 - 2\beta) } }
    \right|^2
  \right]
  .
\end{split}
\end{equation}
Hence, we obtain that 
\begin{equation}
\begin{split}
&
  \sup_{
    t \in (0,T]
  }
  \left[
  t^r
  \,
  \|
    Y_t^1 - Y_t^2
  \|_{
    L^p ( \P ; H )
  }
  \right]
\\&\leq
  \sqrt{2} \, \perturb
  \left|
  E_{
    2 r , \max\{ \alpha , 2 \beta \}
  }\!\left[
    \left|
      \chi_{ A , \eta }^{ \alpha , T }
      \,
      L_0
      \,
      \tfrac{
        \sqrt{2}\,
        T^{ (1 - \alpha) }
      }{
        \sqrt{ 1 - \alpha }
      }
      +
      \chi_{A,\eta}^{\beta,T}
      \,
      L_1
      \sqrt{ p \, ( p - 1 ) \, T^{ (1 - 2\beta) } }
    \right|^2
  \right]
  \right|^{ 1/2 }
  .
\end{split}
\end{equation}
This finishes the proof of Proposition~\ref{prop:perturbation_estimate}.
\end{proof}

In the next result, Corollary~\ref{cor:initial_perturbation}, we illustrate Proposition~2.5 by a simple example. 
In particular, Corollary~\ref{cor:initial_perturbation} ensures uniqueness of solutions of SEEs with singularities at the initial time.
We refer, e.g., to item~(i) of Theorem~7.4 in Da Prato \& Zabczyk~\cite{dz92} for an existence and uniqueness result
for SEEs without singularities at the initial time.

\begin{corollary}[Initial conditions]
\label{cor:initial_perturbation}
Assume the setting in Section \ref{sec:SPDE_setting}, 
let
$
  \delta \in
  \big[
    0
    ,
    \frac{
      1
    }{ 2 }
    [
      1 +
      \1_{ \{ 0 \} }( L_1 )
    ]
  \big)
$, 
and let 
$ X^1 , X^2 \colon [0,T] \times \Omega \to H_{ -\delta } $ 
be $ (\mathcal{F}_t)_{ t \in [0,T] } $-predictable stochastic  processes which fulfill 
for all 
$
  k \in \{ 1, 2 \}
$,
$ t \in [ 0 , T ] $ 
that 
$ X^k( (0,T] \times \Omega ) \subseteq H $, 
that 
$ 
  \limsup_{ 
    \lambda \nearrow
    \frac{
      1
    }{ 2 }
    [
      1 +
      \1_{ \{ 0 \} }( L_1 )
    ]
  }
  \sup_{ s \in (0,T] } 
  s^\lambda \, 
  \| X^k_s \|_{ \lpn{p}{\P}{H} } 
$
$
  < \infty 
$, 
and $ \P $-a.s.\ that
\begin{equation}
  X_t^k
  =
  e^{ t A }
  X_0^k
  +
  \int_0^t
    e^{ ( t - s ) A }
    \mathbf{F}( s , X_s^k )
  \, \diffns s
  +
  \int_0^t
    e^{ ( t - s ) A }
    \mathbf{B}( s , X_s^k )
  \, \diffns W_s
  .
\end{equation}
Then it holds 
for all 
$
  \lambda \in
  \big[ 
    \delta, \frac{1}{2}[ 1 + \mathbbm{1}_{ \{0\} }(L_1) ] 
  \big)
$ 
that 
\begin{equation}
\begin{split}
&
  \sup_{ t \in (0,T] }
  \left[
    t^{ \lambda }
    \,
    \|
      X_t^1 - X_t^2
    \|_{
      L^p( \P ; H )
    }
  \right]
\leq
  \chi_{ A, \eta }^{ \delta, T }
  \,
  T^{ ( \lambda - \delta ) }
  \|
    X_0^1 - X_0^2
  \|_{
    L^p( \P ; H_{ - \delta } )
  }
  \,
  \Theta_{ A , \eta, p , T }^{ \alpha, \beta, \lambda }( L_0, L_1 )
  .
\end{split}
\end{equation}
\end{corollary}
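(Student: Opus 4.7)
The plan is to apply the perturbation estimate of Proposition~\ref{prop:perturbation_estimate} directly with $Y^1 = X^1$ and $Y^2 = X^2$, so that the big bracket on the right hand side of \eqref{eq:pertubation1} collapses, by the mild-solution identity, to a deterministic semigroup-smoothing expression in the initial-value difference.

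More precisely, first I would check that the hypotheses of Proposition~\ref{prop:perturbation_estimate} are met: both $X^1$ and $X^2$ are $(\mathcal{F}_t)_{t\in[0,T]}$-predictable with $X^k((0,T]\times\Omega)\subseteq H$, and the assumption $\limsup_{\lambda\nearrow\frac12[1+\1_{\{0\}}(L_1)]}\sup_{s\in(0,T]} s^\lambda \|X^k_s\|_{L^p(\P;H)}<\infty$ is exactly \eqref{eq:perturb.condition.II}. Next, using the mild equations satisfied by $X^1$ and $X^2$, one sees that $\P$-a.s.\ for every $t\in[0,T]$
\begin{equation*}
  X^1_t - \smallint_0^t e^{(t-s)A}\mathbf{F}(s,X^1_s)\,\diffns s - \smallint_0^t e^{(t-s)A}\mathbf{B}(s,X^1_s)\,\diffns W_s
  + \smallint_0^t e^{(t-s)A}\mathbf{F}(s,X^2_s)\,\diffns s + \smallint_0^t e^{(t-s)A}\mathbf{B}(s,X^2_s)\,\diffns W_s - X^2_t
  = e^{tA}(X^1_0 - X^2_0).
\end{equation*}
Therefore, the supremum in the big bracket on the right hand side of \eqref{eq:pertubation1} equals $\sup_{t\in(0,T]} t^\lambda \|e^{tA}(X^1_0 - X^2_0)\|_{L^p(\P;H)}$.

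To bound this, I would use the smoothing estimate $\|e^{tA}x\|_H \le \chi^{\delta,T}_{A,\eta}\, t^{-\delta}\,\|x\|_{H_{-\delta}}$ for $t\in(0,T]$, $x\in H_{-\delta}$ (from the definition of $\chi^{\delta,T}_{A,\eta}$), which yields
\begin{equation*}
  t^\lambda \|e^{tA}(X^1_0 - X^2_0)\|_{L^p(\P;H)}
  \leq \chi^{\delta,T}_{A,\eta}\, t^{\lambda-\delta}\, \|X^1_0 - X^2_0\|_{L^p(\P;H_{-\delta})}
  \leq \chi^{\delta,T}_{A,\eta}\, T^{\lambda-\delta}\, \|X^1_0 - X^2_0\|_{L^p(\P;H_{-\delta})},
\end{equation*}
where the last step uses $\lambda \geq \delta \geq 0$ to bound $t^{\lambda-\delta}\leq T^{\lambda-\delta}$ for all $t\in(0,T]$. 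Substituting this into \eqref{eq:pertubation1} yields the claimed inequality.

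There is essentially no obstacle — the work has already been absorbed into Proposition~\ref{prop:perturbation_estimate}. The only minor point to verify is that the parameter range $\lambda\in[\delta,\tfrac12[1+\1_{\{0\}}(L_1)])$ is non-empty and lies in the admissible region for Proposition~\ref{prop:perturbation_estimate}, which follows directly from the standing assumption $\delta\in[0,\tfrac12[1+\1_{\{0\}}(L_1)])$.
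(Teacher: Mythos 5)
Your proof is correct and is exactly the derivation the paper intends: the corollary is stated without proof as a direct illustration of Proposition~\ref{prop:perturbation_estimate}, and your application with $Y^1=X^1$, $Y^2=X^2$, the collapse of the bracket to $e^{tA}(X_0^1-X_0^2)$ via the mild equations, and the smoothing bound $t^\lambda\|e^{tA}(X_0^1-X_0^2)\|_{L^p(\P;H)}\leq \chi^{\delta,T}_{A,\eta}\,T^{(\lambda-\delta)}\,\|X_0^1-X_0^2\|_{L^p(\P;H_{-\delta})}$ (using $\lambda\geq\delta$) is precisely what is needed.
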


\subsection{Existence, uniqueness, and regularity for SEEs with singularities at the initial time}
\label{sec:existence_uniqueness}

In Theorem~\ref{thm:existence_uniqueness} below we establish existence, uniqueness, and regularity for SEEs with singularities at the initial time. The following remark helps to access the formulation of Theorem~\ref{thm:existence_uniqueness}.

\begin{remark}
Assume the setting in Section~\ref{sec:SPDE_setting} and let 
$
  \delta
  \in
  \big(
    -\infty
    ,
    \frac{
      1
    }{ 2 }
    [
      1 +
      \1_{ \{ 0 \} }( L_1 )
    ]
  \big)
$. 
Observe that the assumptions that 
$ \alpha < 1 $, 
$ \hat{\alpha} < 1 $, 
$ \beta < \nicefrac{1}{2} $, 
$ \hat{\beta} < \nicefrac{1}{2} $, 
and 
$
  \1_{(0,\infty)}(L_1) \cdot
  [ \alpha + \hat{\alpha} ]
  < \nicefrac{3}{2}
$
ensure that 
\begin{equation}
      \max\{
        \delta ,
        \alpha + \hat{ \alpha } - 1 ,
        \beta + \hat{ \beta } - \nicefrac{ 1 }{ 2 }
      \}
    <
    \tfrac{
      1
    }{ 2 }\,
    [
      1 +
      \1_{ \{ 0 \} }( L_1 )
    ].
\end{equation}
\end{remark}

We now present the promised existence, uniqueness, and regularity results for SEEs with singularities at the initial time.

\begin{theorem}
\label{thm:existence_uniqueness}
Assume the setting in Section~\ref{sec:SPDE_setting} 
and let 
$
  \delta
  \in
  \big(
    -\infty
    ,
    \frac{
      1
    }{ 2 }
    [
      1 +
      \1_{ \{ 0 \} }( L_1 )
    ]
  \big)
$,
$
  \lambda \in
      \big[\!
      \max\{
        \delta ,
        \alpha + \hat{ \alpha } - 1 ,
        \beta + \hat{ \beta } - \nicefrac{ 1 }{ 2 }
      \}
    ,
    \frac{
      1
    }{ 2 }
    [
      1 +
      \1_{ \{ 0 \} }( L_1 )
    ]
  \big)
$, 
$
  \rho =
  \max\{ \lambda + ( \hat{\beta} - \lambda ) \1_{\{0\}}(L_1), \hat{\beta} \}
$, 
$ \xi \in \mathcal{L}^p( \P|_{\mathcal{F}_0}; H_{-\max\{\delta,0\}} ) $
satisfy  
$
  \sup_{ t\in (0,T] }
  t^{ \delta}\,
  \| e^{tA} \xi \|_{ \lpn{p}{\P}{H} }
$
$
  < \infty
$.
Then
\begin{enumerate}[(i)]
\item
\label{item:thm_existence:existence}
there exists an up-to-modifications unique 
$ (\mathcal{F}_t)_{ t \in [0,T] } $-predictable stochastic process 
$ X \colon [0,T] \times \Omega \to H_{-\max\{\delta,0\}} $
which satisfies for all $ t \in [0,T] $
that 
$ X( (0,T] \times \Omega ) \subseteq H $, 
that 
$
  \sup_{ s \in (0,T] }
  s^\lambda
$
$
  \|X_s\|_{ \lpn{p}{\P}{H} }
  < \infty
$, 
that 
$
  \P\big(
  \int_0^t
  \|
    e^{  (t - s) A}
    \mathbf{F}( s , X_s )
  \|_H
  +
  \|
    e^{  (t - s) A}
    \mathbf{B}( s , X_s )
  \|_{ HS( U, H ) }^2
  \,
  \diffns s
  < \infty
  \big)
  =1
$,
and $ \P $-a.s.\ that
\begin{equation}
\label{eq:SEE}
  X_t
  =
  e^{t A } \xi
  +
  \int_0^t
    e^{  (t - s) A}
    \mathbf{F}( s , X_s )
    \,
  \diffns s
  +
  \int_0^t
    e^{  (t - s) A}
    \mathbf{B}( s , X_s )
    \,
  \diffns W_s,
\end{equation}
\item
\label{item:thm_existence:a_priori}
it holds that
\begin{equation}
\label{eq:item_ii_existence}
\begin{split}
&
  \sup_{ t \in (0,T] }
  \left[
    t^{ \lambda }
    \left\|
      X_t
    \right\|_{
      L^p( \P; H )
    }
  \right]
  \leq
  T^\lambda\,
  \Theta_{A,\eta,p,T}^{\alpha,\beta,\lambda}(L_0,L_1)
\\&\quad\cdot
  \bigg[
  \tfrac{
      \sup_{ t \in (0,T] }
      (
      t^\delta
      \|
        e^{tA}
        \xi
      \|_{
        L^p( \P; H )
      }
      )
  }{
    T^\delta
  }
+
    \tfrac{
      \chi^{ \alpha, T }_{ A, \eta }
      \,
      \hat{L}_0
      \,
      \mathbbm{B}(
        1 - \alpha
        ,
        1 - \hat{\alpha}
      )
    }{
      T^{
        \left(
          \alpha + \hat{\alpha} - 1
        \right)
      }
    }
    +
    \tfrac{
      \chi_{ A, \eta }^{ \beta , T }
      \,
      \hat{L}_1
      \left|
        p \, ( p - 1 )
        \,
        \mathbbm{B}(
          1 - 2 \beta
          ,
          1 - 2 \hat{\beta}
        )
      \right|^{ \nicefrac{ 1 }{ 2 } }
    }{
      \sqrt{2}\,
      T^{
        (
          \beta + \hat{ \beta } - 1/2
        )
      }
    }
  \bigg]
  < \infty,
\end{split}
\end{equation}
\item
\label{item:SEE.temporal}
and for all 
$\varrho\in[0,\min\{1-\alpha,\nicefrac{1}{2}-\beta\})$, 
$s,t\in(0,T]$ 
with $s<t$ 
it holds that 
\begin{equation}
\begin{split}
&
    \left\|
      X_s - X_t
    \right\|_{
      L^p( \P; H )
    }
  \leq
  |s-t|^\varrho
\\&\cdot
  \Bigg[
    \tfrac{
    \kappa^{\varrho,T}_{A,\eta} \,
    \chi^{\varrho+\max\{\delta,0\},T}_{A,\eta} \,
    \|\xi\|_{\lpn{p}{\P}{H_{-\max\{\delta,0\}}}}   
    }{ s^{(\varrho+\max\{\delta,0\})} }
+
    |T\vee1|^{|\lambda-\hat{\alpha}|}
    \left(
    \hat{L}_0
    +
    L_0
    \sup\nolimits_{ u \in (0,T] }
      u^\lambda\,
      \|
        X_u
      \|_{
        \lpn{p}{\P}{H}
      }
    \right)
\\&\cdot
    \bigg[
    \tfrac{
    \chi^{ \alpha, T }_{A,\eta} \,
      \left| s - t \right|^{ (1 - \alpha-\varrho) }
    }{
      \left(
        1 - \alpha
      \right)
      \min\{
      s^{
        \max\{
          \lambda,
          \hat{ \alpha }
        \}
      }
      ,
     t^{
        \max\{
          \lambda,
          \hat{ \alpha }
        \}
      }
      \}
    }
  +
    \tfrac{
    \kappa^{\varrho,T}_{A,\eta}
    \,
    \chi^{ \varrho + \alpha, T }_{A,\eta} \,
      \mathbbm{B}(
        1 -
        \alpha
        - \varrho
        ,
        1 -
        \max\{ \lambda, \hat{ \alpha } \}
      )
    }{
      s^{(
        \varrho + \alpha
        +
        \max\{ \lambda , \hat{ \alpha } \}
        - 1
      )}
    }
    \bigg]
\\&+
  \sqrt{\tfrac{p\,(p-1)}{2}} \,
  |T\vee1|^{ |\lambda-\hat{\beta}| \1_{(0,\infty)}(L_1) }
  \left(
  \hat{L}_1
  +
  L_1
  \sup\nolimits_{ u \in (0,T] }
    u^\lambda\,
    \|
      X_u
    \|_{
      \lpn{p}{\P}{H}
    }
  \right)
\\&\cdot
  \bigg[
  \tfrac{
  \chi^{
    \beta, T
  }_{A,\eta}\,
    \left|
      s - t
    \right|^{
      ( 1/2 - \beta - \varrho )
    }
  }{
    \min\{
    s^\rho
    ,
    t^\rho
    \} \,
    \sqrt{1-2\beta}
  }
+
    \tfrac{
  \kappa^{
    \varrho, T
  }_{A,\eta}
  \,
  \chi^{
    \varrho + \beta, T
  }_{A,\eta} \,
      |
      \mathbb{B}( 1 - 2 \beta - 2 \varrho, 1 - 2\rho )
      |^{1/2}
    }{
      s^{ ( \rho + \varrho + \beta - 1/2 ) }
    }
  \bigg]
  \Bigg]
  .
\end{split}
\end{equation}
\end{enumerate}
\end{theorem}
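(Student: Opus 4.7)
My plan is to organize the argument in four stages: construct a candidate solution by a Picard-type iteration in a weighted Banach space of predictable processes, pass to the limit and verify the mild formulation~\eqref{eq:SEE}, deduce (ii) as a direct consequence of Proposition~\ref{prop:perturbation_estimate}, and obtain the H\"older regularity (iii) by decomposing the mild solution and quoting items~(iv) of Lemmas~\ref{lem:integrals} and~\ref{lem:integrals2}.

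For the existence half of (i), I would work in the Banach space $\mathcal{L}$ given by Lemma~\ref{lem:singular.space.completeness} with $V_0 = H_{-\max\{\delta,0\}}$, $V_1 = H$, and weight exponent $\lambda$; the embedding $V_1 \subseteq V_0$ is continuous and dense since $\max\{\delta,0\}\ge 0$. I would set $X^{(0)}_t = e^{tA}\xi$ and recursively define
\[
  X^{(n+1)}_t
  =
  e^{tA}\xi
  + \int_0^t e^{(t-s)A}\mathbf{F}(s,X^{(n)}_s)\,\mathrm{d}s
  + \int_0^t e^{(t-s)A}\mathbf{B}(s,X^{(n)}_s)\,\mathrm{d}W_s.
\]
Each iterate is well defined and $(\mathcal{F}_t)_{t\in[0,T]}$-predictable by items~(i)--(ii) of Lemma~\ref{lem:integrals} and Lemma~\ref{lem:integrals2}, and the bounds in items~(iii) of those lemmas, together with $\lambda \ge \max\{\delta,\alpha+\hat{\alpha}-1,\beta+\hat{\beta}-\nicefrac12\}$, imply $X^{(n)}\in\mathcal{L}$. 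To show the sequence is Cauchy in $\mathcal{L}$, I would apply the computation leading to~\eqref{eq:perturbation.intermediate} in the proof of Proposition~\ref{prop:perturbation_estimate} to the pair $(X^{(n+1)},X^{(n)})$, and iterate this inequality $n$ times to pick up an $n$-fold convolution kernel whose integral coincides with the $n$-th summand of $\mathrm{E}_{2\lambda,\max\{\alpha,2\beta\}}$; summability of the series then forces $\sum_n |X^{(n+1)}-X^{(n)}|_{\mathcal{L}}<\infty$. Lemma~\ref{lem:singular.space.completeness} produces a limit $X\in\mathcal{L}$, and continuity of the two integral maps on $\mathcal{L}$ (once more via Lemmas~\ref{lem:integrals}(iii) and~\ref{lem:integrals2}(iii) applied to $X^{(n)}-X$) allows me to pass to the limit in the recursion, establishing~\eqref{eq:SEE}. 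Uniqueness is then immediate from Proposition~\ref{prop:perturbation_estimate}: for any two solutions $Y^1,Y^2$ in the stated class the right-hand side of~\eqref{eq:pertubation1} vanishes, forcing $Y^1_t=Y^2_t$ in $\lpn{p}{\P}{H}$ for every $t\in(0,T]$, and at $t=0$ both equal $\xi$.

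Item~(ii) is a clean application of Proposition~\ref{prop:perturbation_estimate} with $Y^1=X$ and $Y^2=0$: the three contributions on the right of~\eqref{eq:item_ii_existence} come respectively from the bound $t^{\lambda}\|e^{tA}\xi\|_{\lpn{p}{\P}{H}} \le T^{\lambda-\delta}\sup_{s\in(0,T]} s^{\delta}\|e^{sA}\xi\|_{\lpn{p}{\P}{H}}$ (using $\lambda\ge\delta$), from Lemma~\ref{lem:integrals}(iii) with $Y=0$ (the $\hat{L}_0$-Beta term), and from Lemma~\ref{lem:integrals2}(iii) with $Y=0$ (the $\hat{L}_1$-Beta term with the $\sqrt{p(p-1)/2}$ prefactor). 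For item (iii) I would decompose $X_t - X_s = (e^{tA}-e^{sA})\xi + (D_t - D_s) + (S_t - S_s)$, where $D$ and $S$ denote the drift and stochastic-integral processes built from $X$. The first summand is controlled via $e^{tA}-e^{sA} = e^{sA}(e^{(t-s)A}-\mathrm{Id}_H)$ and the defining estimates of $\chi^{\varrho+\max\{\delta,0\},T}_{A,\eta}$ and $\kappa^{\varrho,T}_{A,\eta}$, which yields the first bracketed term. Lemma~\ref{lem:integrals}(iv) with $Y=X$ delivers the drift bracket and Lemma~\ref{lem:integrals2}(iv) with $Y=X$ delivers the diffusion bracket; finiteness of $\sup_{u\in(0,T]}u^{\lambda}\|X_u\|_{\lpn{p}{\P}{H}}$ appearing in both is guaranteed by~(ii).

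The main obstacle is the Cauchy step in the Picard iteration: a naive Banach contraction fails because the singular kernels $(t-s)^{-\alpha}$ and $(t-s)^{-2\beta}$, reweighted by $s^{-\lambda}$, only supply a Gronwall-type rather than geometric decay of consecutive differences. The remedy is precisely the iteration trick already used in the proof of Proposition~\ref{prop:perturbation_estimate}: apply inequality~\eqref{eq:perturbation.intermediate} $n$ times so that the effective kernel becomes the $n$-th term of the generalized exponential $\mathrm{E}_{2\lambda,\max\{\alpha,2\beta\}}$, whose series converges, and then sum in $n$ to obtain summability of $|X^{(n+1)}-X^{(n)}|_{\mathcal{L}}$ and thereby the required Cauchy property.
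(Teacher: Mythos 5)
Your proposal is correct in substance and reaches all three items, but the existence step takes a genuinely different route from the paper's. The paper does not use Picard iteration with an iterated Gronwall kernel: it defines the same solution class $\mathbb{L}$, equips it with the family of exponentially weighted norms $\|X\|_{\mathbb{L},r}=\|X_0\|_{L^p(\P;H_{-\max\{\delta,0\}})}+\sup_{t\in(0,T]}e^{rt}t^{\lambda}\|X_t\|_{L^p(\P;H)}$, shows via a substitution and Dini's theorem that the Lipschitz constant of the mild-solution map $\Phi$ with respect to $\|\cdot\|_{\mathbb{L},r}$ tends to $0$ as $r\to-\infty$, and then invokes Banach's fixed point theorem together with Lemma~\ref{lem:singular.space.completeness}; uniqueness comes for free from the contraction, whereas you obtain it from Proposition~\ref{prop:perturbation_estimate} (which works equally well, and is exactly how Corollary~\ref{cor:initial_perturbation} is used elsewhere). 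Your alternative -- iterating the recursion for consecutive Picard differences $n$ times so that the composite kernel reproduces the $n$-th Beta-product term of $\mathrm{E}_{2\lambda,\max\{\alpha,2\beta\}}$ -- is sound, but note one point you gloss over: in the case $L_1>0$ the recursion is on \emph{squared} $L^p$-norms, so what you actually need is summability of the \emph{square roots} of the series terms, not of the terms themselves. This does hold, because $\prod_{k=0}^{n-1}\mathbb{B}(1-b,k(1-b)+1-a)$ decays like $c^n(n!)^{-(1-b)}$, so the square roots still decay super-geometrically; you should say this explicitly rather than appeal to convergence of $\mathrm{E}_{2\lambda,\max\{\alpha,2\beta\}}$ alone. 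A second small imprecision: Lemmas~\ref{lem:integrals}(iii) and~\ref{lem:integrals2}(iii) applied to the process $X^{(n)}-X$ bound $\int_0^te^{(t-s)A}\mathbf{F}(s,X^{(n)}_s-X_s)\,\diffns s$, not the difference of the two drift integrals; for the limit passage you want the direct Lipschitz estimates $\|\int_0^te^{(t-s)A}(\mathbf{F}(s,Y_s)-\mathbf{F}(s,Z_s))\,\diffns s\|_{L^p(\P;H)}\leq\chi^{\alpha,T}_{A,\eta}L_0\int_0^t(t-s)^{-\alpha}\|Y_s-Z_s\|_{L^p(\P;H)}\,\diffns s$ and its stochastic analogue, as in the paper's contraction computation. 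Items (ii) and (iii) of your argument coincide with the paper's proof. What the paper's renorming buys is a one-step contraction with no series manipulation; what your iteration buys is that it makes visible why the constant $\Theta^{\alpha,\beta,\lambda}_{A,\eta,p,T}(L_0,L_1)$ is built from the generalized exponential function.
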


\begin{proof}
Throughout this proof let 
$ \mathcal{L} $
and
$ \mathbb{L} $
be the sets given by
\begin{equation}
\begin{split}
&
  \mathcal{L}
  =
  \left\{
  \substack{
    X \in
    \mathcal{M}(
      \mathrm{Pred}( ( \mathcal{F}_t )_{ t \in [0,T] } )
      ,
      \mathcal{B}( H_{ - \max\{\delta,0\} } )
    )
    \colon
    X( (0,T] \times \Omega ) \subseteq H,
\\ 
    \| X_0 \|_{
      L^p( \P; H_{ -\max\{\delta,0\} } )
    }
    +
    \sup_{ t \in (0,T] }
    t^{ \lambda }
    \,
    \| X_t \|_{
      L^p( \P ; H )
    }
    < \infty
  }
  \right\}
\end{split}
\end{equation}
and 
$
  \mathbb{L}
=
  \big\{
    \big\{
      Y \in \mathcal{L}
      \colon
      \inf\nolimits_{ t \in [0,T] }
      \P\big(
        Y_t = X_t
      \big)
      = 1
    \big\}
    \subseteq \mathcal{L}
    \colon
    X \in \mathcal{L}
  \big\}
$ 
and let 
$
  \left| \cdot \right|_{
    \mathbb{L}
    , r
  }
  \colon
  \mathbb{L}
  \to [0,\infty)
$, $ r \in \R $, 
and
$
  \left\| \cdot \right\|_{
    \mathbb{L}
    , r
  }
  \colon
  \mathbb{L}
  \to [0,\infty)
$, $ r \in \R $, 
be the functions
which satisfy for all 
$ r \in \R $, 
$
  X \in
  \mathbb{L}
$
that
\begin{equation}
  \left|
    X
  \right|_{
    \mathbb{L}
    , r
  }
  =
  \sup_{ t \in (0,T] }
  \left[
    e^{ r t }
    \,
    t^{ \lambda }
    \,
    \|
      X_t
    \|_{ L^p( \P ; H ) }
  \right]
  \qquad
  \text{and}
  \qquad
  \left\|
    X
  \right\|_{
    \mathbb{L}
    , r
  }
  =
  \left\| X_0 \right\|_{ L^p( \P ; H_{ -\max\{\delta,0\} } ) }
  +
  \left|
    X
  \right|_{
    \mathbb{L}
    , r
  }.
\end{equation}
Here and below we do not distinguish between an element $ X \in \mathcal{L} $ 
and its equivalence class 
$
    \big\{
      Y \in \mathcal{L}
      \colon
      \inf\nolimits_{ t \in [0,T] }
      \P\big(
        Y_t = X_t
      \big)
      = 1
    \big\}
    \in \mathbb{L}
$.
We observe that for all $ t \in (0,T] $ it holds that
\begin{equation}
\label{eq:initial}
  t^\lambda
  \|
    e^{ t A } \xi
  \|_{
    L^p( \P ; H )
  }
\leq
  T^{ (\lambda-\delta) }
  \sup_{ s \in (0,T] }
  s^\delta
  \|
    e^{ sA } \xi
  \|_{
    L^p( \P ; H )
  }
< \infty
  .
\end{equation}
This ensures that
\begin{equation}
  \big(
  [0,T] \times \Omega \ni
  ( t , \omega ) \mapsto
  e^{ t A } \xi( \omega )
  \in H_{ - \max\{\delta,0\} }
  \big)
  \in
  \mathcal{L} 
  .
\end{equation}
Combining this with
Lemma~\ref{lem:integrals} and Lemma~\ref{lem:integrals2}
shows that there exists a unique mapping
$
  \Phi
  \colon
  \mathbb{L}
  \to
  \mathbb{L}
$
which satisfies that for all
$
  Y \in
  \mathbb{L}
$,
$ t \in [0,T] $
it holds $ \P $-a.s.\ that
\begin{equation}
\label{eq:fixed_point_mapping}
  \Phi(Y)_t
  =
  e^{tA} \xi
  +
  \int_0^t
    e^{ (t-s) A }
    \mathbf{F}(s,Y_s)
    \,
  \diffns s
  +
  \int_0^t
    e^{ (t-s) A }
    \mathbf{B}(s,Y_s)
    \,
  \diffns W_s
  .
\end{equation}
Our next aim is to prove that there exists a real number
$ r \in \R $ such that
$ \Phi $
is a contraction on the normed $ \R $-vector space
$
  (
    \mathbb{L} ,
    \left\| \cdot \right\|_{ \mathbb{L}, r }
  )
$.
Banach's fixed point theorem together with Lemma~\ref{lem:singular.space.completeness} 
will then allow us to prove \eqref{item:thm_existence:existence}.
Observe that the Burkholder-Davis-Gundy type inequality in
Lemma~7.7 in Da Prato \& Zabczyk~\cite{dz92}
proves that for all
$
  Y, Z \in \mathbb{L}
$, 
$ r \in \R $, 
$ t \in [0,T] $
it holds that
\begin{equation}
\begin{split}
&
  \left\|
    \Phi( Y )_t -
    \Phi( Z )_t
  \right\|_{
    L^p( \P; H )
  }
\leq
  \left\|
    \int_0^t
      e^{ ( t - s ) A }
      \left(
        \mathbf{F}( s, Y_s ) - \mathbf{F}( s, Z_s )
      \right)
    \diffns s
  \right\|_{
    L^p( \P; H )
  }
\\ & \quad
  +
  \left\|
    \int_0^t
      e^{ ( t - s ) A }
      \left(
        \mathbf{B}( s, Y_s ) -
        \mathbf{B}( s, Z_s )
      \right)
    \diffns W_s
  \right\|_{
    L^p( \P; H )
  }
\\& \leq
  \int_0^t
    \left\|
      e^{ ( t - s ) A }
    \right\|_{
      L( H_{ - \alpha }, H )
    }
    \left\|
      \mathbf{F}( Y_s ) -
      \mathbf{F}( Z_s )
    \right\|_{
      L^p( \P; H_{ - \alpha } )
    }
  \diffns s
\\ & \quad
  +
  \left[
    \tfrac{ p \, ( p - 1 ) }{ 2 }
    \int_0^t
      \left\|
        e^{ ( t - s ) A }
      \right\|_{
        L( H_{ - \beta }, H )
      }^2
      \left\|
        \mathbf{B}( Y_s ) -
        \mathbf{B}( Z_s )
      \right\|_{
        L^p( \P ; HS( U, H_{ - \beta } ) )
      }^2
    \diffns s
  \right]^{
    \nicefrac{ 1 }{ 2 }
  }
\\ &
  \leq
  \chi_{ A, \eta }^{ \alpha, T }
  \int_0^t
    L_0 
    \left( t - s \right)^{ - \alpha }
    \left\|
      Y_s - Z_s
    \right\|_{
      L^p( \P; H )
    }
  \diffns s
\\ & \quad
  +
  \chi_{ A, \eta }^{ \beta, T }\,
  \left[
    \tfrac{ p \, ( p - 1 ) }{ 2 }
    \int_0^t
      |L_1|^2
      \left(
        t - s
      \right)^{
        - 2 \beta
      }
      \left\|
        Y_s - Z_s
      \right\|_{
        L^p( \P ; H )
      }^2
    \diffns s
  \right]^{
    \frac{ 1 }{ 2 }
  }
\\& \leq
  \chi_{ A , \eta }^{ \alpha, T }
  \left|
    Y - Z
  \right|_{
    \mathbb{L}, r
  }
  \int_0^t
    L_0 
    \left(
      t - s
    \right)^{ - \alpha }
    s^{ - \lambda }
    \,
    e^{ - r s }
    \,
  \diffns s
\\ & \quad
  +
  \chi_{ A , T }^{ \beta , T }\,
    \left|
      Y - Z
    \right|_{
      \mathbb{L} , r
    }
  \left[
    \tfrac{ p \, ( p - 1 ) }{ 2 }
    \int_0^t
      |L_1|^2 
      \left( t - s \right)^{ - 2 \beta }
      s^{ - 2 \lambda }
      \,
      e^{ - 2 r s }
      \,
    \diffns s
  \right]^{
    \frac12
  }
\\ & \leq
\textstyle
  \left[
    \chi_{ A , \eta }^{ \alpha, T }
    \int_0^t
      L_0 \,
      e^{ -r s }
      \left(
        t - s
      \right)^{ - \alpha }
      s^{ - \lambda }
      \,
    \diffns s
    +
    \chi_{ A , T }^{ \beta , T }\,
    \left[
      \tfrac{ p \, ( p - 1 ) }{ 2 }
      \int_0^t
        |L_1|^2 \,
        e^{ - 2 r s }
        \left( t - s \right)^{ - 2 \beta }
        s^{ - 2 \lambda }
        \,
      \diffns s
    \right]^{
      \frac12
    }
  \right]
\\&\quad\cdot
  \left|
    Y - Z
  \right|_{
    \mathbb{L} , r
  }
  < \infty
  .
\end{split}
\end{equation}
Hence, we obtain that
for all $ Y, Z \in \mathbb{L} $, $ r \in (-\infty,0] $
it holds that
\begin{equation}
\label{eq:contraction}
\begin{split}
&
  \left\|
    \Phi( Y ) - \Phi( Z )
  \right\|_{
    \mathbb{L} , r
  }
\\&=
  \left\|
    \Phi( Y )_0
    -
    \Phi( Z )_0
  \right\|_{
    L^p( \P ; H_{ -\max\{\delta,0\} } )
  }
  +
  \sup_{ t \in (0,T] }
  \left[
    e^{ r t }
    \,
    t^{ \lambda }
    \left\|
      \Phi( Y )_t
      -
      \Phi( Z )_t
    \right\|_{
      L^p(\P;H)
    }
  \right]
\\ & \leq
\textstyle
  \sup\limits_{ t \in (0,T] }
  \left[
    \chi_{ A , \eta }^{ \alpha, T }
    \int_0^t
    \frac{
      L_0 \,
      e^{ r ( t - s ) }
      \,
      t^{ \lambda }
    }{
      \left(
        t - s
      \right)^{ \alpha }
      \,
      s^{ \lambda }
    }
      \,
    \diffns s
    +
    \chi_{ A , T }^{ \beta , T }\,
    \left[
      \tfrac{ p \, ( p - 1 ) }{ 2 }
      \int_0^t
      \frac{
        |L_1|^2 \,
        e^{ 2 r ( t - s ) }
        \,
        t^{ 2 \lambda }
      }{
        \left( t - s \right)^{ 2 \beta }
        \,
        s^{ 2 \lambda }
      }
        \,
      \diffns s
    \right]^{
      \frac12
    }
  \right]
  \left|
    Y - Z
  \right|_{
    \mathbb{L} , r
  }
.
\end{split}
\end{equation}
This and the integral transformation theorem with the diffeomorphisms 
$
    (0,1) \ni s \mapsto t (1-s) \in (0,t)
$ 
for 
$ t \in (0,T] $ 
show that for all 
$ Y, Z \in \mathbb{L} $, $ r \in (-\infty,0] $
it holds that
\begin{equation}
\begin{split}
&
  \left\|
    \Phi( Y ) - \Phi( Z )
  \right\|_{
    \mathbb{L} , r
  }
  \leq
  \left|
    Y - Z
  \right|_{
    \mathbb{L} , r
  }
\\ & \cdot
\textstyle
  \left(
    \chi_{ A , \eta }^{ \alpha, T }
    \sup_{ t \in (0,T] }
    \left[
    \int_0^1
    \frac{
      L_0 \,
      e^{ r t s }
      \,
      t^{ (1 - \alpha) }
    }{
      s^{ \alpha }
      \,
      \left( 1 - s \right)^{ \lambda }
    }
      \,
    \diffns s
    \right]
    +
    \chi_{ A , T }^{ \beta , T }\!
    \left[
      \tfrac{ p \, ( p - 1 ) }{ 2 }
      \sup_{ t \in (0,T] }
      \left[
      \int_0^1
      \frac{
       |L_1|^2 \,
        e^{ 2 r t s }
        \,
        t^{ (1 - 2\beta) }
      }{
        s^{ 2 \beta }
        \,
        \left( 1 - s \right)^{ 2 \lambda }
      }
        \,
      \diffns s
      \right]
    \right]^{
      \frac12
    }
  \right)
  .
\end{split}
\end{equation}
Next note that Lebesgue's theorem of dominated convergence 
ensures that 
for all $ r \in \R $
it holds that the functions
\begin{equation}
  [0,T] \ni t \mapsto 
    \int_0^1
    \frac{
      L_0 \,
      e^{ r t s }
      \,
      t^{ (1 - \alpha) }
    }{
      s^{ \alpha }
      \,
      \left( 1 - s \right)^{ \lambda }
    }
      \,
    \diffns s
    =
    L_0 
      \,
      t^{ (1 - \alpha) }
    \int_0^1
    \frac{
      e^{ r t s }
    }{
      s^{ \alpha }
      \,
      \left( 1 - s \right)^{ \lambda }
    }
      \,
    \diffns s
  \in 
  [0,\infty)
\end{equation}
and 
\begin{equation}
  [0,T] \ni t \mapsto
      \int_0^1
      \frac{
       |L_1|^2 \,
        e^{ 2 r t s }
        \,
        t^{ (1 - 2\beta) }
      }{
        s^{ 2 \beta }
        \,
        \left( 1 - s \right)^{ 2 \lambda }
      }
        \,
      \diffns s
   =
      t^{ (1 - 2\beta) }
      \int_0^1
      \frac{
        |L_1|^2 \,
        e^{ 2 r t s }
      }{
        s^{ 2 \beta }
        \,
        \left( 1 - s \right)^{ 2 \lambda }
      }
        \,
      \diffns s
  \in [0,\infty)
\end{equation}
are continuous.
This and the fact that for all $ t \in [0,T] $
it holds that
\begin{equation}
  \limsup_{ r \to - \infty } 
  \left[
    \int_0^1
    \frac{
      L_0 \,
      e^{ r t s }
      \,
      t^{ (1 - \alpha) }
    }{
      s^{ \alpha }
      \,
      \left( 1 - s \right)^{ \lambda }
    }
      \,
    \diffns s
  \right]
  =
  \limsup_{ r \to - \infty } 
    \left[
      \int_0^1
      \frac{
       |L_1|^2 \,
        e^{ 2 r t s }
        \,
        t^{ (1 - 2\beta) }
      }{
        s^{ 2 \beta }
        \,
        \left( 1 - s \right)^{ 2 \lambda }
      }
        \,
      \diffns s
    \right]
  = 0
\end{equation}
allows us to apply Dini's theorem 
(see, e.g., Theorem~7.13 in Rudin~\cite{Rudin1976}) 
to obtain that
\begin{equation}
\label{eq:uniform0}
\begin{split}
&
  \limsup_{ r \to - \infty }
  \left(
    \chi_{ A , \eta }^{ \alpha, T }
    \sup_{ t \in (0,T] }
    \left[
    \int_0^1
    \tfrac{
      L_0 \,
      e^{ r t s }
      \,
      t^{ (1 - \alpha) }
    }{
      s^{ \alpha }
      \,
      \left( 1 - s \right)^{ \lambda }
    }
      \,
    \diffns s
    \right]
    +
    \chi_{ A , T }^{ \beta , T }\,
    \left[
      \tfrac{ p \, ( p - 1 ) }{ 2 }
      \sup_{ t \in (0,T] }
      \left[
      \int_0^1
      \tfrac{
       |L_1|^2 \,
        e^{ 2 r t s }
        \,
        t^{ (1 - 2\beta) }
      }{
        s^{ 2 \beta }
        \,
        \left( 1 - s \right)^{ 2 \lambda }
      }
        \,
      \diffns s
      \right]
    \right]^{
      \frac12
    }
  \right)
\\ &   = 0
.
\end{split}
\end{equation}
The Banach fixed point theorem 
together with Lemma~\ref{lem:singular.space.completeness} and~\eqref{eq:contraction} hence establishes \eqref{item:thm_existence:existence}, that is,
there exists an up-to-modifications unique $ X \in \mathcal{L} $
which fulfills that for all 
$ t \in[0,T] $ 
it holds that 
$
  \P\big(
  \int_0^t
  \|
    e^{  (t - s) A}
    \mathbf{F}( s , X_s )
  \|_H
  +
  \|
    e^{  (t - s) A}
    \mathbf{B}( s , X_s )
  \|_{ HS( U, H ) }^2
  \,
  \diffns s
  < \infty
  \big)
  =1
$
and~\eqref{eq:SEE}.
In the next step we observe that~\eqref{item:SEE.temporal} follows directly 
from item~\eqref{item:drift.temporal} of Lemma~\ref{lem:integrals}, 
from item~\eqref{item:diff.temporal} of Lemma~\ref{lem:integrals2}, 
and from the fact that 
$
  \forall \, 
  \varrho \in [0,1], \,
  t \in (0,T], \,
  s \in (0,t)
  \colon
  \| e^{tA} \xi - e^{sA} \xi \|_{\lpn{p}{\P}{H}}
  \leq
  \frac{
    |t-s|^\varrho
  }{ s^{(\varrho+\max\{\delta,0\})} } \,
  \kappa^{ \varrho, T }_{A,\eta} \, 
  \chi^{ \varrho+\max\{\delta,0\}, T }_{A,\eta} \,
  \|\xi\|_{\lpn{p}{\P}{H_{-\max\{\delta,0\}}}}
$.
It thus remains to prove \eqref{item:thm_existence:a_priori}.
For this we apply Proposition~\ref{prop:perturbation_estimate} 
(with $ Y^1 = X $, $ Y^2 = 0 $, and $ r = \lambda $ in the notation of Proposition~\ref{prop:perturbation_estimate}) 
to obtain that 
\begin{equation}
\label{eq:a.priori.bound}
\begin{split}
&
  \sup_{
    t \in (0,T]
  }
  \left[
  t^\lambda
  \,
  \|
    X_t
  \|_{
    L^p ( \Omega ; H )
  }
  \right]
\leq
  \Theta_{ A , \eta , p , T }^{ \alpha , \beta , \lambda }( L_0, L_1 )
\\ &
  \cdot
  \sup_{t\in(0,T]}
  \bigg[
  t^\lambda\,
  \Big\|
    X_t
    -
    \int_0^t
      e^{(t-s)A}
      \mathbf{F}(s,X_s)
    \,\diffns s
    -
    \int_0^t
      e^{(t-s)A}
      \mathbf{B}(s,X_s)
    \,\diffns W_s\\
& \qquad\qquad\quad  +
    \int_0^t
      e^{(t-s)A}
      \mathbf{F}(s,0)
    \,\diffns s
    +
    \int_0^t
      e^{(t-s)A}
      \mathbf{B}(s,0)
    \,\diffns W_s
  \Big\|_{L^p(\P;H)}
  \bigg]\\
& =
  \Theta_{ A , \eta , p , T }^{ \alpha , \beta , \lambda }( L_0, L_1 )
\\&\quad\cdot
  \sup_{t\in(0,T]}
  \bigg[  
  t^\lambda
  \,
  \Big\|
    e^{tA}\xi
    +
    \int_0^t
      e^{(t-s)A}
      \mathbf{F}(s,0)
    \,\diffns s
    +
    \int_0^t
      e^{(t-s)A}
      \mathbf{B}(s,0)
    \,\diffns W_s
  \Big\|_{L^p(\P;H)}
  \bigg]
  .
\end{split}
\end{equation}
Next we note that the Burkholder-Davis-Gundy type inequality 
in Lemma~7.7 in Da Prato \& Zabczyk~\cite{dz92} 
implies that for all $t\in[0,T]$ it holds that
\begin{equation}
\begin{split}
& \Big\|
    e^{tA}\xi
    +
    \int_0^t
      e^{(t-s)A}
      \mathbf{F}(s,0)
    \,\diffns s
    +
    \int_0^t
      e^{(t-s)A}
      \mathbf{B}(s,0)
    \,\diffns W_s
  \Big\|_{L^p(\P;H)}\\
& \leq
  \big\|
    e^{tA}\xi
  \big\|_{L^p(\P;H)}
  +
  \int_0^t
    \big\|
      e^{(t-s)A}
      \mathbf{F}(s,0)
    \big\|_{L^p(\P;H)}
    \,\diffns s
    +
    \bigg[
      \tfrac{p(p-1)}2
      \int_0^t
        \big\|
          e^{(t-s)A}
          \mathbf{B}(s,0)
        \big\|_{L^p(\P;H)}^2
      \,\diffns s
    \bigg]^{ 1/2 }\\
& \leq
  \big\|
    e^{tA}\xi
  \big\|_{L^p(\P;H)}
  +
  \chi_{A,\eta}^{\alpha,T}\,
  \hat{L}_0
  \int_0^t
    (t-s)^{-\alpha}\,
    s^{-\hat{\alpha}}
    \,\diffns s
    +
    \chi_{A,\eta}^{\beta,T}\,
    \hat{L}_1
    \bigg[
      \tfrac{p\,(p-1)}2
      \int_0^t
        (t-s)^{-2\beta}\,
        s^{-2\hat{\beta}}
      \,\diffns s
    \bigg]^{ 1/2 }\\
& \leq
  \big\|
    e^{tA}\xi
  \big\|_{L^p(\P;H)}
  +
  \tfrac{
    \chi_{A,\eta}^{\alpha,T}\,
    \hat{L}_0\,
    \mathbb{ B } ( 1 - \alpha , 1 - \hat{ \alpha } )
  }
  {
    t^{(\alpha+\hat{\alpha}-1)}
  }
  +
  \tfrac
  {
    \chi_{ A , \eta }^{ \beta , T }\,
    \hat{ L }_1\,
    \sqrt{
      p \, ( p - 1 ) \,
      \mathbb{B} ( 1 - 2\beta , 1 - 2\hat{\beta} )
    }
  }
  {
    \sqrt{2}
    \,
    t^{(\beta+\hat{\beta}-1/2)}
  }.
\end{split}
\end{equation}
This shows that
\begin{equation}
\begin{split}
&
  \sup_{ t \in (0,T] }
  \left[
  t^\lambda
  \left\|
    e^{tA}\xi
    +
    \int_0^t
      e^{(t-s)A}
      \mathbf{F}(s,0)
    \,\diffns s
    +
    \int_0^t
      e^{(t-s)A}
      \mathbf{B}(s,0)
    \,\diffns W_s
  \right\|_{L^p(\P;H)}
  \right]
\\
& \leq
    T^{(\lambda-\delta)}
    \sup_{ t \in (0,T] }
    \big[
    t^\delta
    \|
      e^{tA}
      \xi
    \|_{L^p(\P;H)}
    \big]
  +
    \chi_{A,\eta}^{\alpha,T}
    \,
    \hat{L}_0
    \,
    T^{(\lambda+1-\alpha-\hat{\alpha})}
    \,
    \mathbb{ B } ( 1 - \alpha , 1 - \hat{ \alpha } )
\\&\quad+
  \tfrac{
    \chi_{ A , \eta }^{ \beta , T }
    \,
    \hat{ L }_1
    \,
    T^{(\lambda+\nicefrac12-\beta-\hat{\beta})}
    \sqrt{
      p \,
      ( p - 1 ) \,
      \mathbb{B}( 1 - 2 \beta , 1 - 2 \hat{\beta} )
    }
  }{
    \sqrt{2}
  }
  < \infty.
\end{split}
\end{equation}
Combining this with~\eqref{eq:a.priori.bound} proves~\eqref{item:thm_existence:a_priori}. 
The proof of Theorem~\ref{thm:existence_uniqueness} is thus completed.
\end{proof}

\begin{corollary} 
\label{cor:rough_initial}
Let
$
  \left(
    H,
    \left\| \cdot \right\|_H,
    \left< \cdot, \cdot \right>_H
  \right)
$ 
and 
$
  \left(
    U,
    \left\| \cdot \right\|_U ,
    \left< \cdot, \cdot \right>_U
  \right)
$
be separable $ \R $-Hilbert spaces
with $ \#_H(H) > 1 $,
let
$ T \in (0,\infty) $,
$ \eta \in \R $,
$ \alpha \in [ 0, 1 ) $, $ \beta \in [ 0, \nicefrac{1}{2} ) $, 
let
$
  ( \Omega , \mathcal{F}, \P, ( \mathcal{F}_t )_{ t \in [0,T] } )
$
be a stochastic basis,
let
$
  ( W_t )_{ t \in [0,T] }
$
be an $\operatorname{Id}_U$-cylindrical $ ( \mathcal{F}_t )_{ t \in [0,T] } $-Wiener process,
let
$
  A \colon D(A)
  \subseteq
  H \rightarrow H
$
be a generator of a strongly continuous analytic semigroup
with the property that
$
  \operatorname{spectrum}( A )
  \subseteq
  \{
    z \in \mathbb{C}
    \colon
    \operatorname{Re}( z ) < \eta
  \}
$,
let
$
  (
    H_r
    ,
    \left\| \cdot \right\|_{ H_r }
    ,
    \left< \cdot , \cdot \right>_{ H_r }
  )
$,
$ r \in \R $,
be a family of interpolation spaces associated to
$
  \eta - A
$,
and let 
$ F \in \operatorname{Lip}( H , H_{ - \alpha } ) $,
$ B \in \operatorname{Lip}( H , HS( U , H_{ - \beta } ) ) $, 
$
  \hat{ \delta }
  =
  \frac{
    1
  }{ 2 }
  \big[
    1 +
    \mathbbm{1}_{ 
      \{ 0 \} 
    }(
      | B |_{
        \operatorname{Lip}( H, HS( U, H_{ - \beta } ) )
      }
    )
  \big]
$.
Then
\begin{enumerate}[(i)]
\item
\label{item:cor_rough_initial:existence}
there exist up-to-modifications unique 
$ ( \mathcal{F}_t )_{ t \in [0,T] } $-predictable stochastic processes
$
  X^x
  \colon
  [ 0 , T ] \times \Omega
  \to
  H_{ - \delta }
$,
$
  x
  \in
  H_{ - \delta }
$,
$
  \delta \in [ 0 , \hat{ \delta } )
$,
which fulfill
for all
$ p \in [2,\infty) $,
$
  \delta
  \in [ 0 , \hat{\delta} )
$,
$
  x \in H_{ - \delta } 
$,
$ t \in [0,T] $
that
$
  X^x
  ( ( 0 , T ] \times \Omega ) \subseteq H
$,
that
$
  \sup_{ s \in (0,T] }
  s^{
    \delta
  }
  \,
  \|
    X_s^x
  \|_{ L^p( \P ; H ) }
  < \infty
$,
and $ \P $-a.s.\ that
\begin{equation}
\label{eq:SEE2_only_Lip}
\begin{split}
&
  X_t^x
  =
  e^{ t A } x
  +
  \int_0^t
    e^{ ( t - s ) A }
      F( X_s^x )
    \, \diffns s
  +
  \int_0^t
    e^{ ( t - s ) A }
      B( X_s^x )
  \, \diffns W_s
  ,
\end{split}
\end{equation}
\item
\label{item:cor_rough_initial:a_priori}
for all
$ p \in [ 2 , \infty ) $, $ \delta \in [ 0 , \hat{ \delta } ) $
it holds that
\begin{equation}
\label{eq:SPDE_smooth_only_Lip}
\begin{split}
&
  \sup_{ x \in H_{ - \delta } }
  \sup_{ t \in (0,T] }
  \left[
  \frac{
    t^{ \delta }
    \left\|
      X_t^x
    \right\|_{
      L^p( \P; H )
    }
  }{
    \max\{ 1, \| x \|_{ H_{ - \delta } } \}
  }
  \right]
  \leq
  \Theta_{A,\eta,p,T}^{ \alpha, \beta, \delta }\big( 
    | F |_{ \operatorname{Lip}( H, H_{ - \alpha } ) } , 
    | B |_{ \operatorname{Lip}( H, HS( U, H_{ - \beta } ) ) } 
  \big)
\\ & \cdot
  \left[
      \chi_{ A, \eta }^{ \delta, T }
    +
    \tfrac{
      \chi^{ \alpha, T }_{ A, \eta }
      \left\| F(0) \right\|_{
        H_{ - \alpha }
      }
      T^{
        (
          \delta + 1 - \alpha 
        )
      }
    }{
      ( 1 - \alpha ) 
    }
    +
    \tfrac{
      \sqrt{
        p \, ( p - 1 )
      }
      \,
      \chi_{ A, \eta }^{ \beta , T }
      \left\| B(0) \right\|_{
        HS( U, H_{ - \beta } )
      }
      T^{
        (
          \delta
          + \nicefrac{ 1 }{ 2 } - \beta 
        )
      }
    }{
      \sqrt{ 2 - 4 \beta } 
    }
  \right]
  < \infty,
\end{split}
\end{equation}
\item 
\label{item:cor_rough_initial:Lipschitz_estimate}
for all
$ p \in [ 2 , \infty ) $, $ \delta \in [ 0 , \hat{ \delta } ) $
it holds that
\begin{equation}
\label{eq:SPDE_smooth_only_Lip_III}
\begin{split}
&
  \sup_{
    \substack{
      x, y \in H_{ - \delta } ,
    \\
      x \neq y
    }
  }
  \sup_{ t \in (0,T] }
  \left[
    \frac{
      t^{ \delta }
      \,
      \|
        X_t^x - X_t^y
      \|_{
        L^p( \P ; H )
      }
    }{
      \|
      x - y
      \|_{
        H_{ - \delta } 
      }
    }
  \right]
\\&\leq
  \chi_{ A, \eta }^{ \delta, T }\,
  \Theta_{A,\eta,p,T}^{ \alpha, \beta, \delta }\big( 
    | F |_{ \operatorname{Lip}( H, H_{ - \alpha } ) } , 
    | B |_{ \operatorname{Lip}( H, HS( U, H_{ - \beta } ) ) } 
  \big)
  < \infty
  ,
\end{split}
\end{equation}
\item
\label{item:cor.SEE.temporal}
and for all 
$ p \in [ 2 , \infty ) $, $ \delta \in [ 0 , \hat{ \delta } ) $,   
$ \varrho \in [0,\min\{1-\alpha,\nicefrac{1}{2}-\beta\}) $   
it holds that 
\begin{equation}
\begin{split}
&
    \sup_{ t \in (0,T] } \sup_{ s \in (0,t) }
    \sup_{x \in H_{-\delta}}
    \left[
    \frac{
    s^{ ( \varrho + \delta ) }
    \left\|
      X^x_s - X^x_t
    \right\|_{
      L^p( \P; H )
    }
    }{
      \max\{1,\|x\|_{H_{-\delta}}\} \,
      | s - t |^\varrho
    }
    \right]
  <\infty.
\end{split}
\end{equation}
\end{enumerate}
\end{corollary}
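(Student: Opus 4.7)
The plan is to reduce all four claims to direct applications of Theorem~\ref{thm:existence_uniqueness} and Proposition~\ref{prop:perturbation_estimate} by specializing the general setting of Subsection~\ref{sec:SPDE_setting} to the time-homogeneous coefficients $\mathbf{F}(t,\omega,v) = F(v)$ and $\mathbf{B}(t,\omega,v) = B(v)$. Concretely I would take $L_0 = |F|_{\operatorname{Lip}(H,H_{-\alpha})}$, $L_1 = |B|_{\operatorname{Lip}(H,HS(U,H_{-\beta}))}$, $\hat L_0 = \|F(0)\|_{H_{-\alpha}}$, $\hat L_1 = \|B(0)\|_{HS(U,H_{-\beta})}$, and $\hat\alpha = \hat\beta = 0$. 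The Lipschitz assumptions on $F$ and $B$ then immediately deliver the bounds \eqref{eq:F_Lipschitz}--\eqref{eq:G_Lipschitz} with no singularity at the initial time, and the compatibility condition $\mathbbm{1}_{(0,\infty)}(L_1)\,[\alpha+\hat\alpha]<\nicefrac{3}{2}$ is automatic because $\alpha<1$.

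For each fixed $p \in [2,\infty)$, $\delta \in [0,\hat\delta)$, and $x \in H_{-\delta}$, I would set $\lambda = \delta$ and $\xi = x$. Since $\delta \ge 0 \ge \max\{\alpha + \hat\alpha - 1,\, \beta + \hat\beta - \nicefrac{1}{2}\}$ (using $\alpha<1$, $\beta<\nicefrac{1}{2}$, $\hat\alpha = \hat\beta = 0$), and since the analytic-semigroup bound $\|e^{tA}x\|_{L^p(\P;H)} \le \chi^{\delta,T}_{A,\eta}\,t^{-\delta}\|x\|_{H_{-\delta}}$ provides the required initial moment estimate, Theorem~\ref{thm:existence_uniqueness} yields, for each such triple $(p,\delta,x)$, a predictable process satisfying \eqref{eq:SEE2_only_Lip} together with the weighted bound \eqref{eq:item_ii_existence}. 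Uniqueness at the $p=2$ level then forces the constructions for different $p$ to agree up to modifications, producing the single process $X^x$ asserted in item~\eqref{item:cor_rough_initial:existence}. Substituting $\hat\alpha=\hat\beta=0$ together with the semigroup estimate into \eqref{eq:item_ii_existence}, using $\mathbbm{B}(1-\alpha,1) = (1-\alpha)^{-1}$ and $\mathbbm{B}(1-2\beta,1) = (1-2\beta)^{-1}$, and absorbing the constant $F(0), B(0)$ contributions into the factor $\max\{1,\|x\|_{H_{-\delta}}\}$ in the denominator of \eqref{eq:SPDP_smooth_only_Lip} yields the a priori estimate \eqref{eq:SPDE_smooth_only_Lip}.

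The Lipschitz bound \eqref{eq:SPDE_smooth_only_Lip_III} in item~\eqref{item:cor_rough_initial:Lipschitz_estimate} is obtained by invoking Proposition~\ref{prop:perturbation_estimate} with $Y^1 = X^x$, $Y^2 = X^y$, and $\lambda = \delta$: because both processes satisfy the same mild equation \eqref{eq:SEE2_only_Lip} with different deterministic initial values, the supremum on the right-hand side of \eqref{eq:pertubation1} collapses to $\sup_{t \in (0,T]} t^\delta \|e^{tA}(x-y)\|_{L^p(\P;H)}$, which is in turn dominated by $\chi^{\delta,T}_{A,\eta}\|x-y\|_{H_{-\delta}}$. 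Item~\eqref{item:cor.SEE.temporal} will then follow directly from item~\eqref{item:SEE.temporal} of Theorem~\ref{thm:existence_uniqueness} applied with $\xi = x$ and $\lambda = \delta$: each singular factor in $s$ appearing in the H\"older-type bound there is at worst of order $s^{-(\varrho+\delta)}$ in the present setting, and every summand depends linearly either on $\|x\|_{H_{-\delta}}$, on the constants $\|F(0)\|, \|B(0)\|$, or on $\sup_u u^\delta \|X^x_u\|_{L^p(\P;H)}$, all of which are uniformly controlled by $\max\{1,\|x\|_{H_{-\delta}}\}$ via the already-established \eqref{eq:SPDE_smooth_only_Lip}.

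The main obstacle is essentially organizational bookkeeping: one must keep careful track of the case distinction $L_1 = 0$ versus $L_1 > 0$, which enters through $\hat\delta$, through the exponent $\rho$ appearing in Lemma~\ref{lem:integrals2} and in item~\eqref{item:SEE.temporal} of Theorem~\ref{thm:existence_uniqueness}, and through the factor $\mathbbm{1}_{\{0\}}(L_1)$ inside $\Theta^{\alpha,\beta,\delta}_{A,\eta,p,T}$, and then verify that each invocation of Theorem~\ref{thm:existence_uniqueness} and Proposition~\ref{prop:perturbation_estimate} is made with admissible and mutually consistent parameters across all $p, \delta$. No new analytic input beyond these two results is required.
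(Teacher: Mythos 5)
Your proposal is correct and follows essentially the same route as the paper: specialize the setting of Subsection~\ref{sec:SPDE_setting} with $\hat\alpha=\hat\beta=0$ and the Lipschitz constants of $F$ and $B$, apply Theorem~\ref{thm:existence_uniqueness} with $\lambda=\delta$ and $\xi=x$ for items~(i), (ii), and (iv), and use the perturbation estimate for item~(iii) (the paper routes this last step through Corollary~\ref{cor:initial_perturbation}, which is just Proposition~\ref{prop:perturbation_estimate} specialized to two mild solutions with different initial values, exactly as you do directly). Your remark that uniqueness in the $p=2$ class glues the constructions for different $p$ into a single process is a point the paper leaves implicit, and is handled correctly.
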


\begin{proof}[Proof 
of Corollary~\ref{cor:rough_initial}]
Throughout this proof 
let
$ L_0, L_1, \hat{L}_0, \hat{L}_1 \in [0,\infty) $
be the real numbers given by 
$
  L_0 = 
  | F |_{ \operatorname{Lip}( H, H_{ - \alpha } ) }
$,
$
  L_1 =
  |B|_{
    \operatorname{Lip}( H , HS( U, H_{ - \beta } ) ) 
  }
$,
$
  \hat{L}_0 = \| F ( 0 ) \|_{ H_{ \alpha } }
$, 
and 
$
  \hat{L}_1 = \| B ( 0 ) \|_{ HS( U, H_{ \beta } ) }
$.
We note that
for all $ t \in (0,T] $, $ X, Y \in \mathcal{L}^p( \P; H ) $
it holds that
\begin{align}
  \|
    F(X) - F(Y)
  \|_{
    L^p( \P; H_{ - \alpha } )
  }
  \leq
  L_0 \,
  \| X - Y \|_{
    L^p( \P; H )
  }
  ,
\quad
&
  \|
    F( 0 )
  \|_{
    L^p( \P ; H_{ - \alpha } )
  }
  \leq
  \hat{L}_0
  ,
\\
  \|
    B(X) - B(Y)
  \|_{
    L^p( \P ; HS( U, H_{ - \beta } ) )
  }
  \leq
  L_1
  \| X - Y \|_{
    L^p( \P; H )
  } ,
\quad
&
  \| B(0) \|_{
    L^p( \P; HS( U, H_{ - \beta } ) )
  }
\leq
  \hat{L}_1
  .
\end{align}
We can hence apply Corollary~\ref{cor:initial_perturbation} and Theorem~\ref{thm:existence_uniqueness}.
More specifically,
an application of Theorem~\ref{thm:existence_uniqueness}
(with 
$ \delta = \delta $, 
$ \lambda = \delta $, 
$ \hat{\alpha} = \hat{\beta} = 0 $, 
$ L_0 = |F|_{ \operatorname{Lip}( H, H_{-\alpha} ) } $, 
$ \hat{L}_0 = \|F(0)\|_{ H_{-\alpha} } $, 
$ L_1 = |B|_{ \operatorname{Lip}( H, HS(U,H_{-\beta}) ) } $, 
and 
$ \hat{L}_1 = \|B(0)\|_{ HS(U,H_{-\beta}) } $ 
for $ \delta \in [0, \hat{\delta} ) $
in the notation of Theorem~\ref{thm:existence_uniqueness})
proves \eqref{item:cor_rough_initial:existence}, 
proves that for all 
$ p \in [2,\infty) $, $ \delta \in [0, \hat{\delta} ) $,
$ x \in H_{ -\delta } $
it holds that
\begin{equation}
\label{eq:apriori.strong}
\begin{split}
&
  \sup_{ t \in (0,T] }
  \left[
    t^\delta
    \left\|
      X_t^x
    \right\|_{
      L^p( \P; H )
    }
  \right]
  \leq
  \Theta_{A,\eta,p,T}^{ \alpha, \beta, \delta }\big( 
    | F |_{ \operatorname{Lip}( H, H_{ - \alpha } ) } , 
    | B |_{ \operatorname{Lip}( H, HS( U, H_{ - \beta } ) ) } 
  \big)
\\ & \cdot
  \bigg[
      \chi_{ A, \eta }^{ \delta, T }
      \left\|
        x
      \right\|_{
        H_{ - \delta } 
      }
    +
      \chi^{ \alpha, T }_{ A, \eta }
      \,
      \|F(0)\|_{ H_{-\alpha} }
      \,
     T^{
        \left(
          \delta + 1 - \alpha 
        \right)
      }
      \,
      \mathbbm{B}(
        1 - \alpha
        ,
        1 
      )
\\&\quad+
    \tfrac{
      \chi_{ A, \eta }^{ \beta , T }
      \,
      \|B(0)\|_{ HS(U,H_{-\beta}) }\,
      T^{
        (
          \delta
          + \nicefrac{ 1 }{ 2 } - \beta 
        )
      }
      \left|
        p \, ( p - 1 )
        \,
        \mathbbm{B}(
          1 - 2 \beta
          ,
          1 
        )
      \right|^{ \nicefrac{ 1 }{ 2 } }
    }{
    \sqrt{2}
    }
  \bigg]
  < \infty,
\end{split}
\end{equation}
and proves that for all 
$ p \in [2,\infty) $, $ \delta \in [0, \hat{\delta} ) $,
$ x \in H_{ -\delta } $, 
$\varrho\in[0,\min\{1-\alpha,\nicefrac{1}{2}-\beta\})$, 
$s,t\in(0,T]$ 
with $s<t$ 
it holds that 
\begin{equation}
\label{eq:temporal.regularity}
\begin{split}
&
    \left\|
      X^x_s - X^x_t
    \right\|_{
      L^p( \P; H )
    }
  \leq
  |s-t|^\varrho
\\&\cdot
  \Bigg[
    \tfrac{
    \kappa^{\varrho,T}_{A,\eta} \,
    \chi^{\varrho+\delta,T}_{A,\eta} \,
    \|x\|_{H_{-\delta}}   
    }{ s^{(\varrho+\delta)} }
+
    |T\vee1|^\delta
    \bigg[
    \tfrac{
    \chi^{ \alpha, T }_{A,\eta} \,
      \left| s - t \right|^{ (1 - \alpha-\varrho) }
    }{
      \left(
        1 - \alpha
      \right) \,
      s^\delta
    }
  +
    \tfrac{
    \kappa^{\varrho,T}_{A,\eta}
    \,
    \chi^{ \varrho + \alpha, T }_{A,\eta} \,
      \mathbbm{B}(
        1 -
        \alpha
        - \varrho
        ,
        1 - \delta
      )
    }{
      s^{(
        \varrho + \alpha
        +
        \delta
        - 1
      )}
    }
    \bigg]
\\&\cdot
    \left(
    \|F(0)\|_{ H_{-\alpha} }
    +
    | F |_{ \operatorname{Lip}( H, H_{ - \alpha } ) }
    \sup\nolimits_{ u \in (0,T] }
      u^\delta\,
      \|
        X^x_u
      \|_{
        \lpn{p}{\P}{H}
      }
    \right)
+
  |T\vee1|^{ \delta \1_{(0,\infty)}(| B |_{ \operatorname{Lip}( H, HS( U, H_{ - \beta } ) ) } ) }
\\&\cdot
  \sqrt{\tfrac{p\,(p-1)}{2}}
  \left(
  \|B(0)\|_{ HS(U,H_{-\beta}) }
  +
  | B |_{ \operatorname{Lip}( H, HS( U, H_{ - \beta } ) ) } 
  \sup\nolimits_{ u \in (0,T] }
    u^\delta\,
    \|
      X^x_u
    \|_{
      \lpn{p}{\P}{H}
    }
  \right)
\\&\cdot
  \bigg[
  \tfrac{
  \chi^{
    \beta, T
  }_{A,\eta}\,
    \left|
      s - t
    \right|^{
      ( 1/2 - \beta - \varrho )
    }
  }{
    s^{ \delta \1_{(0,\infty)}(| B |_{ \operatorname{Lip}( H, HS( U, H_{ - \beta } ) ) }) }
    \,
    \sqrt{1-2\beta}
  }
+
    \tfrac{
  \kappa^{
    \varrho, T
  }_{A,\eta}
  \,
  \chi^{
    \varrho + \beta, T
  }_{A,\eta} \,
      |
      \mathbb{B}( 1 - 2 \beta - 2 \varrho, 1 - 2 \delta \1_{(0,\infty)}(| B |_{ \operatorname{Lip}( H, HS( U, H_{ - \beta } ) ) }) )
      |^{1/2}
    }{
      s^{ ( \delta \1_{(0,\infty)}(| B |_{ \operatorname{Lip}( H, HS( U, H_{ - \beta } ) ) }) + \varrho + \beta - 1/2 ) }
    }
  \bigg]
  \Bigg]
  .
\end{split}
\end{equation}
Observe that~\eqref{eq:apriori.strong} establishes~\eqref{item:cor_rough_initial:a_priori} 
and note that~\eqref{item:cor_rough_initial:a_priori} and~\eqref{eq:temporal.regularity} establish~\eqref{item:cor.SEE.temporal}.
In addition, an application of Corollary~\ref{cor:initial_perturbation} 
(with 
$ X^1 = X^x $, 
$ X^2 = X^y $, 
$ \delta = \delta $, 
and $ \lambda = \delta $ 
for 
$ x, y \in H_{ -\delta } $, 
$ \delta \in [0, \hat{\delta} ) $
in the notation of Corollary~\ref{cor:initial_perturbation})
ensures that
for all $ p \in [2,\infty) $, $ \delta \in [ 0, \hat{\delta} ) $, $ x, y \in H_{ -\delta } $
it holds that
\begin{equation}
\begin{split}
&
  \sup_{ t \in (0,T] }
  \left[
    t^\delta
    \,
    \|
      X_t^x - X_t^y
    \|_{
      L^p( \P ; H )
    }
  \right]
\\&\leq
  \chi_{ A, \eta }^{ \delta, T }
  \,
  \|
    x - y
  \|_{
    H_{ - \delta } 
  }
  \Theta_{A,\eta,p,T}^{ \alpha, \beta, \delta }\big( 
    | F |_{ \operatorname{Lip}( H, H_{ - \alpha } ) } , 
    | B |_{ \operatorname{Lip}( H, HS( U, H_{ - \beta } ) ) } 
  \big)
< \infty
  .
\end{split}
\end{equation}
This establishes \eqref{item:cor_rough_initial:Lipschitz_estimate}.
The proof of Corollary~\ref{cor:rough_initial}
is thus completed.
\end{proof}

\section{Examples and counterexamples for SEEs with irregular initial values}

Corollary~\ref{cor:rough_initial} in Subsection~\ref{sec:existence_uniqueness} above establishes existence, uniqueness, and regularity properties for solutions of parabolic SEEs.
In this section we first illustrate the statement of Corollary~\ref{cor:rough_initial} in the case of semilinear stochastic heat equations with space-time white noise and periodic boundary conditions; see Corollary~\ref{cor:periodic_boundary} in Subsection~\ref{sec:counterexample} below. 
Roughly speaking, Corollary~\ref{cor:periodic_boundary} shows existence and uniqueness of solutions of the considered stochastic heat equation provided that the initial value lies in 
$
  \cup_{ \delta \in (-\nicefrac{1}{2},\infty) } H_\delta
$ 
where 
$ H_r $, $ r \in \R $, 
is a family of interpolation spaces associated to the Laplacian with 
periodic boundary conditions. 
Corollary~\ref{cor:periodic_boundary} applies, in particular, to the continuous version of the parabolic Anderson model. 
Thereafter, we illustrate in Proposition~\ref{prop:counterexampleI} in Subsection~\ref{sec:counterexample}, in Proposition~\ref{prop:counterexampleII} in Subsection~\ref{sec:counterexample.II}, and in Proposition~\ref{prop:counterexample2} in Subsection~\ref{sec:counterexample.III} 
by means of several example SEEs that the statement of Corollary~\ref{cor:rough_initial} can in general not be improved.
Moreover, we illustrate in Proposition~\ref{prop:positive.example} in Subsection~\ref{sec:counterexample} in the case of a specific linear example SEE with regular noise that the statement of Corollary~\ref{cor:rough_initial} can be improved.

\subsection{Setting}
\label{sec:setting.examples}
Throughout this section the following setting is sometimes used.
Let 
$
  \left(
    H,
    \left\| \cdot \right\|_H,
    \left< \cdot, \cdot \right>_H
  \right)
$ 
be a separable $\R$-Hilbert space, 
let 
$ T \in (0,\infty) $, 
$ \eta, \delta \in \R $, 
let
$
  ( \Omega , \mathcal{F}, \P, ( \mathcal{F}_t )_{ t \in [0,T] } )
$
be a stochastic basis, 
let $ A \colon D(A) \subseteq H \rightarrow H $ 
be a diagonal linear operator with 
$
  \operatorname{spectrum}( A )
  \subseteq
  (-\infty,\eta)
$, 
and let
$
  (
    H_r
    ,
    \left\| \cdot \right\|_{ H_r }
    ,
    \left< \cdot , \cdot \right>_{ H_r }
  )
$,
$ r \in \R $,
be a family of interpolation spaces associated to
$
  \eta - A
$.

\subsection{Stochastic heat equations with linear multiplicative noise}
\label{sec:counterexample}

\begin{corollary}
\label{cor:periodic_boundary}
Let 
$
  \left(
    H,
    \left\| \cdot \right\|_H,
    \left< \cdot, \cdot \right>_H
  \right)
  =
  \big(
    L^2( \mu_{(0, 1)}; \R ) ,
    \left\| \cdot \right\|_{
      L^2( \mu_{(0, 1)}; \R )
    },
    \left< \cdot, \cdot \right>_{
      L^2( \mu_{(0, 1)}; \R )
    }
  \big)
$, 
let 
$ T, \eta \in (0,\infty) $, 
$
  \beta \in ( \nicefrac{1}{4}, \nicefrac{1}{2} )
$, 
$ f, b \in \operatorname{Lip}( \R, \R ) $, 
let
$
  ( \Omega , \mathcal{F}, \P, ( \mathcal{F}_t )_{ t \in [0,T] } )
$
be a stochastic basis,
let
$
  ( W_t )_{ t \in [0,T] }
$
be an $\operatorname{Id}_H$-cylindrical $ ( \mathcal{F}_t )_{ t \in [0,T] } $-Wiener process,
let  
$
  A \colon D(A) \subseteq H \to H 
$
be the Laplacian with periodic boundary conditions 
on $ H $,
let
$
  (
    H_r
    ,
    \left\| \cdot \right\|_{ H_r }
    ,
    \left< \cdot , \cdot \right>_{ H_r }
  )
$,
$ r \in \R $,
be a family of interpolation spaces associated to
$
  \eta - A
$,
and let 
$ F \colon H \rightarrow H $ 
and  
$ B \colon H \rightarrow HS( H, H_{-\beta} ) $ 
satisfy for all 
$ v \in \mathcal{L}^2( \mu_{(0,1)} ; \R ) $, 
$ u \in \mathcal{C}\big( [0,1] , \R \big) $ 
that 
$
  F( [v]_{ \mu_{(0,1)}, \mathcal{B}(\R) } ) 
  = 
  [
  \{ f( v(x) ) \}_{ x \in (0,1) }
  ]_{ \mu_{(0,1)}, \mathcal{B}(\R) }
$ 
and 
$
  B\big( [v]_{ \mu_{(0,1)}, \mathcal{B}(\R) } \big) [ u|_{ (0,1) } ]_{ \mu_{(0,1)}, \mathcal{B}(\R) } 
  = 
  \big[
  \big\{ b( v(x) ) \cdot u(x) \big\}_{ x \in (0,1) }
  \big]_{ \mu_{(0,1)}, \mathcal{B}(\R) }
$.
Then 
\begin{enumerate}[(i)]
\item
there exist up-to-modifications 
unique 
$ ( \mathcal{F}_t )_{ t \in [0,T] } $-predictable
stochastic processes
$
  X^x \colon [0,T] \times \Omega \to H_{ - \delta }
$,
$ x \in H_{ - \delta } $, 
$ \delta \in [0, \nicefrac{ 1 }{ 2 } ) $,
which fulfill for all 
$ p \in [2,\infty) $,
$
  \delta
  \in [ 0 , \nicefrac{1}{2} )
$,
$
  x \in H_{ - \delta } 
$, 
$ t \in [0,T] $
that
$
  X^x
  ( ( 0 , T ] \times \Omega ) \subseteq H
$,
that
$
  \sup_{ s \in (0,T] }
  s^\delta
  \,
  \|
    X_s^x
  \|_{ L^p( \P ; H ) }
  < \infty
$, 
and $ \P $-a.s.\ that 
\begin{equation}
\begin{split}
  X^x_t
  =
  e^{ tA } x
  +
  \int^t_0 
  e^{ (t-s)A }
  F(X^x_s)
  \,\diffns{s}
  +
  \int^t_0 
  e^{ (t-s)A }
  B(X^x_s)
  \,\diffns{W_s}
\end{split}
\end{equation}
\item
and for all 
$ p \in [2,\infty) $,
$
  \delta
  \in [ 0 , \nicefrac{1}{2} )
$ 
it holds that 
\begin{equation}
  \sup_{
    \substack{
      x, y \in H_{ - \delta } ,
    \\
      x \neq y
    }
  }
  \sup_{ t \in (0,T] }
  \left[
  \frac{
    t^{ \delta }
    \left\|
      X_t^x
    \right\|_{
      L^p( \P; H )
    }
  }{
    \max\{ 1, \| x \|_{ H_{ - \delta } } \}
  }
  +
    \frac{
      t^{ \delta }
      \,
      \|
        X_t^x - X_t^y
      \|_{
        L^p( \P ; H )
      }
    }{
      \|
      x - y
      \|_{
        H_{ - \delta } 
      }
    }
  \right]
  < \infty
  .
\end{equation}
\end{enumerate}
\end{corollary}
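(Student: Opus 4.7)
The plan is to verify the hypotheses of Corollary~\ref{cor:rough_initial} with $U = H$, $\alpha = 0$, and $\beta$ as given, and to apply that corollary directly. Both conclusions of the present statement will then follow from items~\eqref{item:cor_rough_initial:existence} and~\eqref{item:cor_rough_initial:Lipschitz_estimate} of Corollary~\ref{cor:rough_initial} once we observe that the regularity barrier $\hat\delta = \tfrac{1}{2}[1 + \1_{\{0\}}(|B|_{\operatorname{Lip}(H, HS(H, H_{-\beta}))})]$ is always at least $\nicefrac{1}{2}$, so that $[0, \nicefrac{1}{2}) \subseteq [0, \hat\delta)$.

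The first step is to show that $F \in \operatorname{Lip}(H, H)$. Since $f \in \operatorname{Lip}(\R, \R)$, for all $v, w \in H$ the pointwise Lipschitz bound on $f$ integrates to
\begin{equation*}
  \|F(v) - F(w)\|_H^2
  = \int_0^1 |f(v(x)) - f(w(x))|^2 \, dx
  \leq |f|_{\operatorname{Lip}(\R, \R)}^2 \, \|v - w\|_H^2,
\end{equation*}
giving $|F|_{\operatorname{Lip}(H, H)} \leq |f|_{\operatorname{Lip}(\R, \R)}$.

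The main obstacle is the Hilbert--Schmidt bound on the multiplication operator $B$. I would use the eigenbasis $(e_n)_{n \in \Z}$ of $A$ given by $e_n(x) = e^{2\pi i n x}$ with eigenvalues $-4\pi^2 n^2$, so that $(\eta - A)$ has eigenvalues $\eta + 4\pi^2 n^2$. For $\phi \in H$, denoting by $M_\phi$ the pointwise multiplication operator, the Fourier identity $\scal{\phi \cdot e_n}{e_m}_H = \hat\phi(m - n)$ yields
\begin{equation*}
  \|M_\phi\|_{HS(H, H_{-\beta})}^2
  = \sum_{n, m \in \Z} (\eta + 4\pi^2 m^2)^{-2\beta} \, |\hat\phi(m - n)|^2
  = \|\phi\|_H^2 \sum_{m \in \Z} (\eta + 4\pi^2 m^2)^{-2\beta},
\end{equation*}
where the last series is finite precisely because $\beta > \nicefrac{1}{4}$; call its square root $C_\beta$. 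Applying this with $\phi = b(v) - b(w)$ together with the pointwise Lipschitz estimate $\|b(v) - b(w)\|_H \leq |b|_{\operatorname{Lip}(\R, \R)} \|v - w\|_H$ gives
\begin{equation*}
  \|B(v) - B(w)\|_{HS(H, H_{-\beta})}
  \leq C_\beta \, |b|_{\operatorname{Lip}(\R, \R)} \, \|v - w\|_H,
\end{equation*}
so that $B \in \operatorname{Lip}(H, HS(H, H_{-\beta}))$.

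With $F$ and $B$ verified to be Lipschitz on $H$ with values in $H_{-\alpha} = H$ and $HS(H, H_{-\beta})$, respectively, Corollary~\ref{cor:rough_initial} applies for every $\delta \in [0, \hat\delta)$ and, in particular, for every $\delta \in [0, \nicefrac{1}{2})$. Items~\eqref{item:cor_rough_initial:existence}--\eqref{item:cor_rough_initial:Lipschitz_estimate} of that corollary then yield the existence and up-to-modifications uniqueness of the mild solutions $X^x$ as well as the combined a priori and Lipschitz dependence bound, completing the proof.
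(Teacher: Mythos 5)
Your proposal is correct and follows exactly the route the paper intends: the paper states Corollary~\ref{cor:periodic_boundary} as a direct application of Corollary~\ref{cor:rough_initial} (with $U=H$, $\alpha=0$) and omits the details, which are precisely the two Lipschitz verifications you carry out. Your Hilbert--Schmidt computation for the multiplication operator, showing $\|M_\phi\|_{HS(H,H_{-\beta})}^2=\|\phi\|_H^2\sum_{m\in\mathbb{Z}}(\eta+4\pi^2m^2)^{-2\beta}<\infty$ exactly when $\beta>\nicefrac{1}{4}$, is the key quantitative step and is carried out correctly.
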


\begin{prop}
\label{prop:counterexampleI}
Let 
$
  \left(
    H,
    \left\| \cdot \right\|_H,
    \left< \cdot, \cdot \right>_H
  \right)
  =
  \big(
    L^2( \mu_{(0, 1)}; \R ) ,
    \left\| \cdot \right\|_{
      L^2( \mu_{(0, 1)}; \R )
    },
    \left< \cdot, \cdot \right>_{
      L^2( \mu_{(0, 1)}; \R )
    }
  \big)
$, 
let 
$ T, \eta, $
$
\nu \in (0,\infty) $, 
$ r \in [ 0, \infty ) $,
$
  \delta \in \R
$, 
$
  \beta \in ( \frac{ 1 }{ 4 } , \frac{ 1 }{ 2 } ) 
$,
let
$
  ( \Omega , \mathcal{F}, \P, ( \mathcal{F}_t )_{ t \in [0,T] } )
$
be a stochastic basis,
let
$
  ( W_t )_{ t \in [0,T] }
$
be an $\operatorname{Id}_H$-cylindrical $ ( \mathcal{F}_t )_{ t \in [0,T] } $-Wiener process,
let
$ (b_n)_{ n \in \mathbb{Z} } \subseteq H $
satisfy 
for all $ n \in \N $
that
$
  b_0
  = 
  [
    \{1\}_{ x \in (0,1) }
  ]_{ \mu_{ (0,1) }, \mathcal{B}(\R) }
$,
$
  b_n
  =
  [
    \{
  \sqrt{2}
  \cos( 2 n \pi x )
    \}_{ x \in (0,1) }
  ]_{ \mu_{ (0,1) }, \mathcal{B}(\R) }
$,
and
$
  b_{ - n }
  =
  [
    \{
  \sqrt{2}
  \sin( 2 n \pi x )
    \}_{ x \in (0,1) }
  ]_{ \mu_{ (0,1) }, \mathcal{B}(\R) }
$,
let $ A \colon D(A) \subseteq H \rightarrow H $ 
be the linear operator 
which satisfies  
$
  D( A ) = \big\{
    v \in H \colon
    \sum_{ n \in \mathbb{Z} }
    |
      n^2 \langle b_n , v \rangle_H
    |^2
    < \infty
  \big\}
$
and which satisfies 
for all $ v \in D(A) $
that
$
  A v
  =
  -
  \nu
  \sum_{ n \in \mathbb{Z} }
  n^2
  \langle b_n, v \rangle_H
  b_n
$,
let
$
  (
    H_q
    ,
    \left\| \cdot \right\|_{ H_q }
    ,
    \left< \cdot , \cdot \right>_{ H_q }
  )
$,
$ q \in \R $,
be a family of interpolation spaces associated to
$
  \eta - A
$,
let 
$
  \xi \in \mathcal{M}\big( \mathcal{F}_0, \mathcal{B}(H_{ \delta }) \big)
$,
$
  B \in L( H , HS( H, H_{ - \beta } ) )
$
satisfy for all 
$ v \in \mathcal{L}^2( \mu_{(0,1)} ; \R ) $, 
$ u \in \mathcal{C}\big( [0,1], \R \big) $ 
that
$
  B\big( [v]_{ \mu_{(0,1)}, \mathcal{B}(\R) } \big) [ u|_{ (0,1) } ]_{ \mu_{(0,1)}, \mathcal{B}(\R) }
  = 
  [
  \{ v(x) \cdot u(x) \}_{ x \in (0,1) }
  ]_{ \mu_{(0,1)}, \mathcal{B}(\R) }
$, 
and let 
$
  X \colon [0,T] \times \Omega \to H_{ \delta }
$
be an $ ( \mathcal{F}_t )_{ t \in [0,T] } $-predictable stochastic 
process 
which satisfies for all $ t \in (0,T] $
that
$
  X( (0,T] \times \Omega ) \subseteq H
$,
that
\begin{equation}
\label{eq:counterexampleI.integrability}
  \E\big[ 
    \| e^{ t A } \xi \|^2_{ H_{-r} } 
  \big] 
  +
  \int_0^t 
  \E\big[ 
    \| e^{ ( t - s ) A } B( X_s ) \|^2_{ HS( H, H_{-r} ) } 
  \big] 
  \, ds
  < \infty,
\end{equation}
and $ \P $-a.s.\ that
$
  X_t = e^{ t A } \xi + \int_0^t e^{ ( t - s ) A } B( X_s ) \, dW_s
$.
Then 
\begin{enumerate}[(i)]
\item 
it holds that
$
  \P\!\left(
  \xi \in H_{ - 1 / 2 }
  \right)
  =1
$ 
and 
\item
it holds for all $ t \in (0,T] $ that 
\begin{equation}
    2^{ -1/2 }\,
    \eta^{-r}\,
    (
      1 -
      e^{
        - 2 \eta t
      }
    )^{ 1 / 2 }
  \,
  \|
    \xi
  \|_{ \lpn{2}{\P}{H_{ - 1 / 2 }} }
\leq 
      \| X_t \|_{ \lpn{2}{\P}{H_{-r}} }
< 
  \infty.
\end{equation}
\end{enumerate}
\end{prop}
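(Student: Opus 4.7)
The plan is to combine an It\^o-isometry lower bound in $H_{-r}$ with a mode-by-mode Fourier computation, exploiting that for the pointwise-multiplication noise $B(v)u = v\cdot u$ the $m=0$ diagonal of the Hilbert--Schmidt kernel reproduces the $L^2$-norm of $v$. First, under the integrability condition~\eqref{eq:counterexampleI.integrability}, the It\^o isometry applied in the Hilbert space $H_{-r}$ yields
\begin{equation*}
\|X_t\|_{\lpn{2}{\P}{H_{-r}}}^{2}
=\|e^{tA}\xi\|_{\lpn{2}{\P}{H_{-r}}}^{2}
+\int_0^t\E\!\left[\|e^{(t-s)A}B(X_s)\|_{HS(H,H_{-r})}^{2}\right]ds<\infty,
\end{equation*}
so both terms on the right are finite. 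Using that $(b_n)_{n\in\mathbb{Z}}$ diagonalizes $\eta-A$ with eigenvalues $\eta+\nu n^{2}$, an ONB expansion together with Parseval in the form $\sum_n|\langle b_m,vb_n\rangle_H|^{2}=\|vb_m\|_H^{2}=\int b_m^{2}v^{2}\,dx$ gives, for every $v\in H$ and $\tau\in(0,T]$,
\begin{equation*}
\|e^{\tau A}B(v)\|_{HS(H,H_{-r})}^{2}
=\sum_{m\in\mathbb{Z}}(\eta+\nu m^{2})^{-2r}e^{-2\nu m^{2}\tau}\|vb_m\|_H^{2}
\geq\eta^{-2r}\|v\|_H^{2}
\end{equation*}
by retaining only the $m=0$ term and using $b_0\equiv 1$. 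Dropping the $e^{tA}\xi$ contribution, this already yields $\|X_t\|_{\lpn{2}{\P}{H_{-r}}}^{2}\geq\eta^{-2r}\int_0^t\E[\|X_s\|_H^{2}]\,ds$.

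The second step is to push the initial datum out of $X_s$. The issue is that $X_s$ need not lie in $H$, so one cannot apply It\^o isometry directly in $H$. I resolve this by a finite-dimensional cut-off: let $P_N$ be the $H$-orthogonal projection onto $\mathrm{span}(b_{-N},\ldots,b_N)$. Then $P_Ne^{(s-u)A}B(X_u)$ has finite rank and hence finite $HS(H,H)$-norm, so the standard $H$-valued It\^o isometry applied to the projected mild equation gives
\begin{equation*}
\E[\|P_NX_s\|_H^{2}]
=\E[\|P_Ne^{sA}\xi\|_H^{2}]
+\int_0^s\E[\|P_Ne^{(s-u)A}B(X_u)\|_{HS(H,H)}^{2}]\,du
\geq\E[\|P_Ne^{sA}\xi\|_H^{2}].
\end{equation*}
Monotone convergence as $N\to\infty$ (both sides are increasing sums over Fourier modes, viewed as elements of $[0,\infty]$) upgrades this to $\E[\|X_s\|_H^{2}]\geq\E[\|e^{sA}\xi\|_H^{2}]=\sum_{k\in\mathbb{Z}}e^{-2\nu k^{2}s}\E[|\langle b_k,\xi\rangle_H|^{2}]$.

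The third step is the arithmetic bound $\int_0^t e^{-2\nu k^{2}s}\,ds\geq\frac{1-e^{-2\eta t}}{2(\eta+\nu k^{2})}$ for every $k\in\mathbb{Z}$, which follows from the elementary observation that $a\mapsto(1-e^{-at})/a$ is non-increasing on $(0,\infty)$ (applied with $a=2\nu k^{2}\leq 2(\eta+\nu k^{2})$) combined with $1-e^{-2(\eta+\nu k^{2})t}\geq 1-e^{-2\eta t}$. Summing in $k$ after interchanging sum and time integral via Tonelli gives
\begin{equation*}
\int_0^t\E[\|X_s\|_H^{2}]\,ds
\geq\tfrac{1-e^{-2\eta t}}{2}\sum_{k\in\mathbb{Z}}(\eta+\nu k^{2})^{-1}\E[|\langle b_k,\xi\rangle_H|^{2}]
=\tfrac{1-e^{-2\eta t}}{2}\|\xi\|_{\lpn{2}{\P}{H_{-1/2}}}^{2}.
\end{equation*}
Combining with step one yields $\|X_t\|_{\lpn{2}{\P}{H_{-r}}}^{2}\geq 2^{-1}\eta^{-2r}(1-e^{-2\eta t})\|\xi\|_{\lpn{2}{\P}{H_{-1/2}}}^{2}$, which, after taking square roots, is precisely the claimed lower bound; finiteness of the left-hand side then forces $\|\xi\|_{\lpn{2}{\P}{H_{-1/2}}}<\infty$ and thus $\P(\xi\in H_{-1/2})=1$. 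The main delicate point is the cut-off argument in step two: one has to ensure that the projected stochastic integral makes classical sense in $H$ (which is immediate by finite rank and the fact that $B(X_u)\in H_{-\beta}$ with $(\eta-A)^{\beta}P_N$ bounded) and that monotone convergence really applies to both the left-hand side $\E[\|P_NX_s\|_H^{2}]$ and the ``initial'' term $\E[\|P_Ne^{sA}\xi\|_H^{2}]$, which both equal nondecreasing Fourier partial sums of nonnegative quantities.
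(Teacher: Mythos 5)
Your proof is correct, and its outer skeleton --- the It\^{o} isometry in $H_{-r}$, the reduction to the zeroth Fourier mode where the multiplication operator satisfies $B(v)b_0=v$, and the elementary estimate $\int_0^te^{-2\nu k^2s}\,\diffns s\geq(1-e^{-2\eta t})/(2(\eta+\nu k^2))$ --- coincides with the paper's. Where you genuinely diverge is in how the stochastic-integral part of $X_s$ is discarded inside the diffusion term. The paper substitutes the mild formula into $B(X_s)b_k$, expands the square in $H_{-r}$, and shows that the cross term between $e^{(t-s)A}B(e^{sA}\xi)b_k$ and the image under $B(\cdot)b_k$ of the inner stochastic integral has vanishing expectation; to make that inner object a genuine square-integrable martingale it needs the preparatory estimate on the constants $\kappa_k=\sup_x\|B(x)b_k\|^2_{H_{-r-1}}/\|x\|^2_{H_{-r}}$, which rests on the multiplication-algebra structure of the periodic Sobolev scale. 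You instead first extract $\int_0^t\E[\|X_s\|^2_H]\,\diffns s\leq\eta^{2r}\,\E[\|X_t\|^2_{H_{-r}}]<\infty$ from the $m=0$ output mode, and only then run the It\^{o} isometry in $H$ itself after the Galerkin cut-off $P_N$, passing to the limit by monotone convergence to obtain $\E[\|X_s\|^2_H]\geq\E[\|e^{sA}\xi\|^2_H]$. This buys you an argument that never needs the $\kappa_k$ lemma and uses nothing about $B$ beyond $B\in L(H,HS(H,H_{-\beta}))$ together with the single identity $\sum_n|\langle b_0,B(v)b_n\rangle_H|^2=\|v\|_H^2$, at the modest cost of having to justify that $P_N$ commutes with the stochastic integral and that $\int_0^s\E[\|P_Ne^{(s-u)A}B(X_u)\|^2_{HS(H,H)}]\,\diffns u<\infty$ --- both of which you correctly reduce to the finiteness of $\int_0^s\E[\|X_u\|^2_H]\,\diffns u$ established in your first step. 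The remaining ingredients (vanishing of the $\mathcal{F}_0$-cross terms, Tonelli, and the identification $\sum_k(\eta+\nu k^2)^{-1}\E[|\langle b_k,\xi\rangle_H|^2]=\|\xi\|^2_{\lpn{2}{\P}{H_{-1/2}}}$) are the same as in the paper.
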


\begin{proof}
Throughout this proof let 
$ \kappa_k \in [0,\infty] $, 
$ k \in \mathbb{Z} $, 
be the extended real numbers which satisfy for all 
$ k \in \mathbb{Z} $ 
that 
\begin{equation}
  \kappa_k
  =
  \sup_{ x \in (\cap_{ s \in \R } H_s) \setminus \{0\} }
  \frac{
    \| B(x) b_k \|^2_{ H_{-r-1} }
  }{ \| x \|^2_{H_{-r}} }
  .
\end{equation}
Observe that the product rule for differentiation 
and the fact that the mapping
$
  \mathcal{C}\big( [0,1], \R \big)
  \ni v 
  \mapsto 
  [
    v|_{ (0,1) }
  ]_{
    \mu_{ (0,1) }, \mathcal{B}( \R ) 
  }
  \in 
  H_{ 1/2 }
$ 
is continuous
ensures that for all $ n \in \N $ 
it holds that 
$
  \forall \, 
  u, v \in \cap_{s\in\R} H_s
  \colon
  u \cdot v \in \cap_{s\in\R} H_s
$ 
and 
$
  \sup_{ u, v \in (\cap_{s\in\R} H_s) \setminus \{0\} }
  \frac{
    \| u \cdot v \|_{ H_n }
  }{
    \| u \|_{ H_n } \, \| v \|_{ H_n }
  }
  < \infty
$. 
This implies that for all 
$ k \in \mathbb{Z} $, 
$ n \in \N_0 $ 
it holds that 
\begin{equation}
\begin{split}
&
  \sup_{ x \in ( \cap_{s\in\R} H_s ) \setminus \{0\} }
  \frac{
    \| B(x) b_k \|_{ H_{-n} }
  }{
    \|x\|_{ H_{-n} }
  }
  =
  \sup_{ x, u \in ( \cap_{s\in\R} H_s ) \setminus \{0\} }
  \frac{
    \left|\left< u, B(x) b_k \right>_H\right|
  }{
    \|x\|_{ H_{-n} } \, \|u\|_{ H_n }
  }
\\&=
  \sup_{ x, u \in ( \cap_{s\in\R} H_s ) \setminus \{0\} }
  \frac{
    \left|\left< u \cdot b_k, x \right>_H\right|
  }{
    \|x\|_{ H_{-n} } \, \|u\|_{ H_n }
  }
=
  \sup_{ x, u \in ( \cap_{s\in\R} H_s ) \setminus \{0\} }
  \frac{
    \left|\left< (\eta-A)^n (u \cdot b_k), (\eta-A)^{-n} x \right>_H\right|
  }{
    \|x\|_{ H_{-n} } \, \|u\|_{ H_n }
  }
\\&\leq
  \sup_{ u \in ( \cap_{s\in\R} H_s ) \setminus \{0\} }
  \frac{
    \| u \cdot b_k \|_{ H_n }
  }{
    \|u\|_{ H_n }
  }
  < \infty
  .
\end{split}
\end{equation}
Hence, we obtain that for all $ k \in \mathbb{Z} $ 
it holds that 
\begin{equation}
\begin{split}
&
  \kappa_k
  =
  \sup_{ x \in (\cap_{ s \in \R } H_s) \setminus \{0\} }
  \frac{
    \| B(x) b_k \|^2_{ H_{-r-1} }
  }{ \| x \|^2_{H_{-r}} }
\\&\leq
  \|
    ( \eta - A )^{ -r-1 - \lceil -r -1 \rceil_1 }
  \|^2_{ L(H) }
  \left[
    \sup_{ x \in (\cap_{ s \in \R } H_s) \setminus \{0\} }
    \frac{
      \| B(x) b_k \|_{ H_{\lceil -r -1 \rceil_1} }
    }{ \| x \|_{H_{-r}} }
  \right]^2
\\&\leq
  \|
    ( \eta - A )^{ -r-1 - \lceil -r -1 \rceil_1 }
  \|^2_{ L(H) }
  \left[
    \sup_{ x \in (\cap_{ s \in \R } H_s) \setminus \{0\} }
    \frac{
      \| x \|_{ H_{\lceil -r -1 \rceil_1} }
    }{ \| x \|_{H_{-r}} }
  \right]^2
\\&\quad\cdot
  \left[
    \sup_{ x \in (\cap_{ s \in \R } H_s) \setminus \{0\} }
    \frac{
      \| B(x) b_k \|_{ H_{\lceil -r -1 \rceil_1} }
    }{ \| x \|_{H_{\lceil -r -1 \rceil_1}} }
  \right]^2
\\&=
  \|
    ( \eta - A )^{ -r-1 - \lceil -r -1 \rceil_1 }
  \|^2_{ L(H) }
  \,
  \|
    ( \eta - A )^{ r + \lceil -r -1 \rceil_1 }
  \|^2_{ L(H) }
\\&\quad\cdot
  \left[
    \sup_{ x \in (\cap_{ s \in \R } H_s) \setminus \{0\} }
    \frac{
      \| B(x) b_k \|_{ H_{\lceil -r -1 \rceil_1} }
    }{ \| x \|_{H_{\lceil -r -1 \rceil_1}} }
  \right]^2
  < \infty
  .
\end{split}
\end{equation}
In the next step we observe that
for all 
$ t \in (0,T] $
it holds $ \P $-a.s.\ that
\begin{equation}
\label{eq:squared.solution}
\begin{split}
&
  \left\| X_t \right\|_{ H_{-r} }^2
=
  \left\| 
    e^{ t A } \xi 
    +
    \int_0^t
    e^{ ( t - s ) A } B( X_s ) \, dW_s
  \right\|_{ H_{-r} }^2
\\ & =
  \left\| e^{ t A } \xi \right\|^2_{ H_{-r} }
  +
  2
  \left<
    e^{ t A } \xi
    ,
    \int_0^t
    e^{ ( t - s ) A } B( X_s ) \,
    dW_s
  \right>_{ \! H_{-r} }
  +
  \left\|
    \int_0^t
    e^{ ( t - s ) A } B( X_s ) \, dW_s
  \right\|^2_{ H_{-r} }
  .
\end{split}
\end{equation}
Combining~\eqref{eq:squared.solution}
with It\^{o}'s isometry
and the assumption that 
$ 
  \forall \, t \in (0,T] \colon
  \E\big[ 
    \| e^{ t A } \xi \|^2_{ H_{-r} } 
  \big] 
  +
  \int_0^t 
    \E\big[ 
      \| e^{ ( t - s ) A } B( X_s ) \|^2_{ HS( H, H_{-r} ) } 
    \big] 
  \, ds
  < \infty
$
proves that
for all $ t \in (0,T] $
it holds that
\begin{equation}
\label{eq:counterexample_last_step}
\begin{split}
&
  \E\!\left[
    \left\| X_t \right\|_{ H_{-r} }^2
  \right]
=
  \E\!\left[ \| e^{ t A } \xi \|^2_{H_{-r}} \right]
  +
  2\,
  \E\!\left[
  \left<
    e^{ t A } \xi
    ,
      \int_0^t
      e^{ ( t - s ) A } B( X_s ) \,
      dW_s
  \right>_{ \! H_{-r} }
  \right]
\\&\quad+
  \E\!\left[
  \left\|
    \int_0^t
    e^{ ( t - s ) A } B( X_s ) \, dW_s
  \right\|^2_{ H_{-r} }
  \right]
\\ & =
  \E\!\left[ \| e^{ t A } \xi \|^2_{H_{-r}} \right]
  +
  2\,
  \E\!\left[\left.
  \left<
    e^{ t A } \xi
    ,
    \E\!\left[
      \int_0^t
      e^{ ( t - s ) A } B( X_s ) \,
      dW_s
    \right|\mathcal{F}_0
    \right]
  \right>_{ \! H_{-r} }
  \right]
\\&\quad+
  \int_0^t
  \E\!\left[
    \left\|
      e^{ ( t - s ) A } B( X_s ) 
    \right\|^2_{ HS( H, H_{-r} ) }
  \right]
  ds
\\ & =
  \E\!\left[ \| e^{ t A } \xi \|^2_{H_{-r}} \right]
  +
  \int_0^t
  \E\!\left[
    \left\|
      e^{ ( t - s ) A } B( X_s ) 
    \right\|^2_{ HS( H, H_{-r} ) }
  \right]
  ds
\\ & =
  \E\!\left[ \| e^{ t A } \xi \|^2_{H_{-r}} \right]
  +
  {\smallsum\limits_{ k \in \mathbb{Z} }}
  \int_0^t
  \E\!\left[
    \left\|
      e^{ ( t - s ) A } B( X_s ) b_k 
    \right\|^2_{ H_{-r} }
  \right]
  ds
  < \infty
  .
\end{split}
\end{equation}
Moreover, we note that for all 
$ k \in \mathbb{Z} $, $ t \in (0,T] $, $ s \in (0,t) $
it holds $ \P $-a.s.\ that
\begin{equation}
\label{eq:counterexample_2}
\begin{split}
&
    \left\|
      e^{ ( t - s ) A } B\!\left( X_s \right) b_k
    \right\|^2_{ H_{-r} }
= 
    \left\|
      e^{ ( t - s ) A } 
      B\!\left( 
        e^{ s A } \xi + \int_0^s e^{ ( s - u ) A } B( X_u ) \, dW_u 
      \right) \! b_k
    \right\|^2_{ H_{-r } }
\\ & =
    \left\|
      e^{ ( t - s ) A } 
      B\!\left( 
        e^{ s A } \xi 
      \right) \! b_k
    \right\|^2_{ H_{-r } }
  +
    \left\|
      e^{ ( t - s ) A } 
      B\!\left( 
        \int_0^s e^{ ( s - u ) A } B( X_u ) \, dW_u 
      \right) \! b_k
    \right\|^2_{ H_{-r } }
\\ & \quad +
    2
    \left<
      e^{ ( t - s ) A } 
      B\!\left( 
        e^{ s A } \xi 
      \right) \! b_k
      ,
      e^{ ( t - s ) A } 
      B\!\left( 
        \int_0^s e^{ ( s - u ) A } B( X_u ) \, dW_u 
      \right) \! b_k
    \right>_{ \! H_{-r} }
\\ & \geq
    \left\|
      e^{ ( t - s ) A } 
      B\!\left( 
        e^{ s A } \xi 
      \right) \! b_k
    \right\|^2_{ H_{-r} }
\\ & \quad +
    2
    \left<
      e^{ ( t - s ) A } 
      B\!\left( 
        e^{ s A } \xi 
      \right) \! b_k
      ,
      \int_0^s 
      e^{ ( t - s ) A } 
      B\!\left( 
        e^{ ( s - u ) A } B( X_u ) \, dW_u 
      \right) \! b_k
    \right>_{ \! H_{-r} }
    .
\end{split}
\end{equation}
This and assumption~\eqref{eq:counterexampleI.integrability}
imply that for all 
$ k \in \mathbb{Z} $, $ t \in (0,T] $, $ s \in (0,t) $
it holds that
\begin{equation}
\label{eq:integrability.stoch.integrand}
\begin{split}
&
  \E\!\left[
      \textstyle
      \sum\limits_{ n \in \mathbb{Z} }
      \displaystyle
      \int_0^s 
      \left\|
      e^{ ( t - s ) A } 
      B\!\left( 
        e^{ ( s - u ) A } B( X_u ) \, b_n 
      \right) \!
      b_k
      \right\|^2_{
        H_{-r}
      } 
      du
  \right]
\\ & \leq
  \left\|
    e^{ ( t - s ) A } 
  \right\|_{
    L( H_{ -r-1 }, H_{-r} )
  }^2
  \E\!\left[
      \textstyle
      \sum\limits_{ n \in \mathbb{Z} }
      \displaystyle
      \int_0^s 
      \left\|
      B\!\left( 
        e^{ ( s - u ) A } B( X_u ) \, b_n 
      \right)\! 
      b_k
      \right\|^2_{
        H_{-r-1}
      } 
      du
  \right]
\\ & \leq
    \kappa_k
    \left\|
      e^{ ( t - s ) A } 
    \right\|_{
      L( H_{ - r - 1 }, H_{-r} )
    }^2
  \E\!\left[
      \textstyle
      \sum\limits_{ n \in \mathbb{Z} }
      \displaystyle
      \int_0^s 
      \left\|
        e^{ ( s - u ) A } B( X_u ) \, b_n 
      \right\|^2_{ H_{ - r } }
      du
  \right]
\\ & =
    \kappa_k
    \left\|
      e^{ ( t - s ) A } 
    \right\|_{
      L( H_{ - r - 1 }, H_{-r} )
    }^2
  \E\!\left[
      \int_0^s 
      \left\|
        e^{ ( s - u ) A } B( X_u ) 
      \right\|^2_{ HS( H, H_{ -r } ) }
      du
  \right]
  < \infty
\end{split}
\end{equation}
and 
\begin{equation}
\begin{split}
\label{eq:integrability.initial}
&
  \E\!\left[
  \left\|
  e^{ (t-s)A } B( e^{sA} \xi ) b_k
  \right\|^2_{ H_{-r} }
  \right]
\leq
    \kappa_k
    \left\|
      e^{ ( t - s ) A } 
    \right\|^2_{
      L( H_{ -r-1 }, H_{-r} )
    }
  \,
  \E\!\left[
  \left\|
  e^{ sA } \xi
  \right\|^2_{ H_{ -r } }
  \right] 
  < \infty
  .
\end{split}
\end{equation}
Combining~\eqref{eq:counterexample_2} with~\eqref{eq:integrability.stoch.integrand}--\eqref{eq:integrability.initial} proves that
for all 
$ k \in \mathbb{Z} $, $ t \in (0,T] $, $ s \in (0,t) $
it holds that
\begin{equation}
\begin{split}
& 
  \E\!\left[
    \left\|
      e^{ ( t - s ) A } B( X_s ) b_k
    \right\|^2_{ H_{-r} }
  \right]
\geq
    \E\!\left[
    \left\|
      e^{ ( t - s ) A } 
      B\!\left( 
        e^{ s A } \xi 
      \right) \! b_k
    \right\|^2_{ H_{-r} }
    \right]
\\ & \quad +
    2\,
    \E\!\left[
    \left<
      e^{ ( t - s ) A } 
      B\!\left( 
        e^{ s A } \xi 
      \right)
      b_k
      ,
      \int_0^s 
      e^{ ( t - s ) A } 
      B\!\left( 
        e^{ ( s - u ) A } B( X_u ) \, dW_u 
      \right) 
      b_k
    \right>_{ \! H_{-r} }
    \right]
\\&=
    \E\!\left[
    \left\|
      e^{ ( t - s ) A } 
      B\!\left( 
        e^{ s A } \xi 
      \right) b_k
    \right\|^2_{ H_{-r} }
    \right]
\\ & \quad +
    2\,
    \E\!\left[
    \left<
      e^{ ( t - s ) A } 
      B\!\left( 
        e^{ s A } \xi 
      \right)
      b_k
      ,
      \E\!\left[\left.
      \int_0^s 
      e^{ ( t - s ) A } 
      B\!\left( 
        e^{ ( s - u ) A } B( X_u ) \, dW_u 
      \right)
      b_k
      \right|
      \mathcal{F}_0
      \right]
    \right>_{ \! H_{-r} }
    \right]
\\ & 
=
    \E\!\left[
    \left\|
      e^{ ( t - s ) A } 
      B\!\left( 
        e^{ s A } \xi 
      \right) b_k
    \right\|^2_{ H_{-r} }
    \right]
    .
\end{split}
\end{equation}
Combining this with~\eqref{eq:counterexample_last_step}
ensures that
for all $ t \in (0,T] $
it holds that
\begin{equation}
\label{eq:counterexample_xi_estimate}
\begin{split}
&
  \infty >
  \E\!\left[
    \left\| X_t \right\|_{ H_{-r} }^2
  \right]
\geq
  \E\!\left[
  \| e^{ t A } \xi \|^2_{ H_{-r} }
  \right]
  +
  {\smallsum\limits_{ k \in \mathbb{Z} }}
  \int_0^t
    \E\!\left[
    \left\|
      e^{ ( t - s ) A } 
      B\!\left( 
        e^{ s A } \xi 
      \right) b_k
    \right\|^2_{ H_{-r} }
    \right]
  ds
\\ & \geq
  \int_0^t
    \E\!\left[
    \left\|
      e^{ ( t - s ) A } 
      B\!\left( 
        e^{ s A } \xi 
      \right) 
    \right\|^2_{ HS( H, H_{-r} ) }
    \right]
  ds
=
  \int_0^t
    \E\!\left[
    \left\|
      ( \eta - A )^{-r}
      e^{ ( t - s ) A } 
      B\!\left( 
        e^{ s A } \xi 
      \right) 
    \right\|^2_{ HS( H ) }
    \right]
  ds
\\&=
  \int_0^t
    \E\!\left[
    \left\|
      B\!\left( 
        e^{ s A } \xi 
      \right)
      e^{ ( t - s ) A } 
      ( \eta - A )^{-r}
    \right\|^2_{ HS( H ) }
    \right]
  ds
\\ & =
  \sum_{ n \in \mathbb{Z} }
  \int_0^t
    \E\!\left[
    \left\|
      B( 
        e^{ s A } \xi 
      )
      \,
      e^{ ( t - s ) A } 
      \,
      ( \eta - A )^{-r} b_n
    \right\|^2_H 
    \right]
  ds
\\&\geq
  \int_0^t
    \E\!\left[
    \left\|
      B( 
        e^{ s A } \xi 
      )
      \,
      e^{ ( t - s ) A } 
      \,
      ( \eta - A )^{-r} b_0
    \right\|^2_H 
    \right]
  ds
\\ & =
  \eta^{ -2 r }
  \int_0^t
    \E\!\left[
    \left\|
      B( 
        e^{ s A } \xi 
      )
      \,
      b_0
    \right\|^2_H 
    \right]
  ds
  =
  \eta^{ -2 r }
  \int_0^t
    \E\!\left[
    \|
      e^{ s A } \xi 
    \|^2_H 
    \right]
  \, ds
\\ & =
  \eta^{ -2 r }
  \sum_{ n \in \mathbb{Z} }
  \int_0^t
    \E\!\left[
    \left|
      \left< 
        e^{ s A }
        b_n ,
        \xi 
      \right>_H
    \right|^2
    \right]
  ds
=
  \eta^{ -2 r }
  \sum_{ n \in \mathbb{Z} }
  \int_0^t
    e^{
      - 2 ( \nu n^2  + \eta ) s
    }
    \,
    e^{ 2 \eta s }
    \,
    \E\!\left[
    \left|
      \left< 
        b_n ,
        \xi 
      \right>_H
    \right|^2
    \right]
  ds
\\&\geq
  \eta^{ -2 r }
  \sum_{ n \in \mathbb{Z} }
  \int_0^t
    e^{
      - 2 ( \nu n^2  + \eta ) s
    }
    \,
    \E\!\left[
    \left|
      \left< 
        b_n ,
        \xi 
      \right>_H
    \right|^2
    \right]
  ds
=
  \eta^{ -2 r }
  \sum_{ n \in \mathbb{Z} }
  \frac{
    (
      1 - e^{ - 2 ( \nu n^2  + \eta ) t }
    )
    \,
    \E\!\left[
    \left|
      \left< 
        b_n ,
        \xi 
      \right>_H
    \right|^2
    \right]
  }{
    2 \, ( \nu n^2  + \eta )
  }
\\&=
  \frac{
    1
  }{ 2 \eta^{ 2r } }
  \sum_{ n \in \mathbb{Z} }
    \big(
      1 - e^{ - 2 ( \nu n^2  + \eta ) t }
    \big)
    \,
    \E\!\left[
    \left|
      \left< 
        ( \eta - A )^{ - 1 / 2 } b_n ,
        \xi 
      \right>_H
    \right|^2
    \right]
\\&\geq
  \frac{ 
    \big(
      1 - e^{ - 2 \eta t }
    \big)
  }{ 2 \eta^{ 2r } }
  \sum_{ n \in \mathbb{Z} }
    \E\!\left[
    \left|
      \left< 
        ( \eta - A )^{ -1 / 2 } b_n ,
        \xi 
      \right>_H
    \right|^2
    \right]
  .
\end{split}
\end{equation}
In particular, we obtain that 
$
    \E\big[
  \sum_{ n \in \mathbb{Z} }
    \big|
      \left< 
        ( \eta - A )^{ -1 / 2 } b_n ,
        \xi 
      \right>_H
    \big|^2
    \big]
  < \infty
$.
Therefore, it holds that
$
  \P\!\left(
  \xi \in H_{ - 1 / 2 }
  \right)
  =1
$.
This and \eqref{eq:counterexample_xi_estimate} complete 
the proof of Proposition~\ref{prop:counterexampleI}.
\end{proof}

\begin{prop}[Positive example]
\label{prop:positive.example}
Assume the setting in Subsection~\ref{sec:setting.examples}, 
let 
$ k \in \N $, 
$
  \xi \in \mathcal{M}\big( 
    \mathcal{F}_0, 
    \mathcal{B}( H_\delta )
  \big)
$,
$
  (L_i)_{ i \in \{ 1,2,\ldots, k \} } \subseteq L(H)
$, 
$
  B \in L( H , HS( \R^k, H ) )
$
satisfy for all 
$ i,j \in \{ 1,2,\ldots, k \} $, 
$
  v \in H
$, 
$ u \in D(A) $, 
$ \mathbf{y} = ( y_1,y_2,\ldots,y_k ) \in \R^k $ 
that 
$ L_i( D(A) ) \subseteq D(A) $, 
$
  L_i L_j u - L_j L_i u
  =
  L_i A u - A L_i u
  =
  0
$, 
and 
$
  B(v)\mathbf{y}
  =
  \sum^k_{ l=1 }
  y_l
  L_l v
$, 
let
$
  W
  =
  ( W^{(1)}, W^{(2)}, 
  \ldots, W^{(k)} )
  \colon 
$
$
  [0,T] \times \Omega \rightarrow \R^k
$ 
be a $ k $-dimensional standard $ ( \mathcal{F}_t )_{ t \in [0,T] } $-Brownian motion 
with continuous sample paths,
and let 
$
  X \colon [0,T] \times \Omega \to H_\delta
$
satisfy for all $ t \in [0,T] $
that
\begin{equation}
  X_t = 
  \exp\!\big(
    tA +
    {\smallsum^k_{ i=1 }}
    \big[
    W^{(i)}_t L_i
    -\tfrac{1}{2} t (L_i)^2
  \big]\big)
  \,
  \xi
  .
\end{equation}
Then 
$ X $ has continuous sample paths and 
for all 
$ r \in \R $, 
$ t \in [0,T] $ 
it holds $\P$-a.s.\ that 
$
  \int^t_0
    \| e^{ ( t - s ) A } B( X_s ) \|^2_{ HS( \R^k, H_{ r } ) } 
  \, ds
  < \infty
$
and 
$
  X_t = 
  e^{ t A } \xi 
  + 
  \int_0^t e^{ ( t - s ) A } B( X_s ) \, dW_s
$.
\end{prop}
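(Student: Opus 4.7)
The plan is to exploit the pairwise commutativity of $A$ and the operators $L_1,\ldots,L_k$ to factor out the semigroup and reduce the claim to an Itô SDE for a bounded-operator-valued process. Concretely, I would first observe that since the $L_i$'s commute with each other, the power series
\begin{equation*}
  Z_t := \exp\!\Big(\smallsum^k_{i=1}\big[W^{(i)}_tL_i - \tfrac{1}{2}t(L_i)^2\big]\Big)
\end{equation*}
converges absolutely in $L(H)$, and that the assumption $L_i A u = A L_i u$ for $u\in D(A)$ together with $L_i(D(A))\subseteq D(A)$ ensures $L_i$ commutes with $(\eta-A)^r$ for every $r\in\R$. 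In particular $L_i \in L(H_r)$ with $\|L_i\|_{L(H_r)}=\|L_i\|_{L(H)}$, and $e^{tA}$ commutes with $Z_t$. Interpreting $\exp(tA+\sum_i[\cdots])\xi$ as $e^{tA}Z_t\xi$, one obtains $X_t=e^{tA}Z_t\xi$ for all $t\in[0,T]$.

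The core step is to show that $(Z_t\xi)_{t\in[0,T]}$ satisfies the $H$-valued Itô equation $Z_t\xi=\xi+\sum^k_{i=1}\int_0^t L_iZ_s\xi\,\diffns W^{(i)}_s$. To derive this rigorously, I would set $F\colon[0,T]\times\R^k\to L(H)$, $F(t,w)=\exp(\sum_i[w_iL_i-\tfrac{1}{2}t(L_i)^2])$, note that the commutativity gives $\partial_tF=-\tfrac{1}{2}\sum_iL_i^2 F$, $\partial_{w_i}F=L_iF$, and $\partial_{w_i}^2F=L_i^2F$, and apply the scalar Itô formula fiberwise to the real martingale $[0,T]\ni t\mapsto\langle u,F(t,W_t)\xi\rangle_H$ for every $u\in H$. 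The $-\tfrac{1}{2}\sum_iL_i^2$ drift cancels exactly against the Itô correction $\tfrac{1}{2}\sum_iL_i^2$, leaving $\sum_i\langle L_i^*u,Z_s\xi\rangle_H\,\diffns W^{(i)}_s$. Taking $u$ over a countable dense set then upgrades the scalar identity to the $H$-valued Itô equation above.

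Granted the SDE for $Z_t\xi$, the remaining assertions follow cleanly. Continuity of sample paths in $H_\delta$ is immediate: $t\mapsto Z_t\xi$ is continuous in $H_\delta$ because it is a continuous image of the continuous path $t\mapsto(t,W_t)$ under $L(H_\delta)$-valued maps, and strong continuity of $(e^{tA})_{t\geq 0}$ on $H_\delta$ then propagates continuity to $X_t=e^{tA}Z_t\xi$. For the integrability, for each $r\in\R$, $t\in(0,T]$, using $e^{(t-s)A}B(X_s)\mathbf{y}=\sum_iy_iL_ie^{(t-s)A}X_s = \sum_iy_iL_ie^{tA}Z_s\xi$ and the smoothing bound $\|e^{tA}\|_{L(H_\delta,H_r)}\leq\chi^{\max\{r-\delta,0\},T}_{A,\eta}\,t^{-\max\{r-\delta,0\}}e^{\eta t}$, one gets $\int_0^t\|e^{(t-s)A}B(X_s)\|^2_{HS(\R^k,H_r)}\diffns s\leq C_{r,\delta,t}\sup_{s\in[0,t]}\|Z_s\xi\|_{H_\delta}^2<\infty$ $\P$-a.s. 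Finally, for the mild form, apply the bounded deterministic operator $e^{tA}$ to the Itô identity for $Z_t\xi$ and pass it inside the stochastic integral using $e^{tA}L_i=L_ie^{tA}=e^{(t-s)A}L_ie^{sA}$:
\begin{equation*}
  X_t
  =e^{tA}\xi+\smallsum^k_{i=1}e^{tA}L_i\!\int_0^t\!Z_s\xi\,\diffns W^{(i)}_s
  =e^{tA}\xi+\smallsum^k_{i=1}\!\int_0^t\!e^{(t-s)A}L_iX_s\,\diffns W^{(i)}_s
  =e^{tA}\xi+\!\int_0^t\!e^{(t-s)A}B(X_s)\,\diffns W_s.
\end{equation*}

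The main obstacle is Step~1, namely the rigorous derivation of the Itô SDE for the operator-valued exponential $Z_\cdot\xi$ in the Hilbert space $H$. The clean workaround just indicated (scalar Itô applied to $\langle u,F(t,W_t)\xi\rangle_H$ for $u$ in a dense countable subset of $H$, combined with continuity of both sides as $H$-valued processes) circumvents any need for an intrinsic operator-valued Itô formula; thereafter the remaining claims reduce to semigroup manipulations that are harmless because all relevant operators commute.
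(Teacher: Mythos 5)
Your proposal is correct and rests on the same central mechanism as the paper's proof: It\^{o}'s formula applied to the explicit operator exponential, with the drift $-\tfrac12\sum_i (L_i)^2$ cancelling exactly against the second-order It\^{o} correction, and with commutativity used to rewrite $e^{(t-s)A}L_iX_s = L_ie^{tA}Z_s\xi$ so that the full smoothing $e^{tA}$ (rather than $e^{(t-s)A}$) carries the integrability claim for every $r\in\R$. The packaging differs in two respects. The paper defines $\varphi(s,\mathbf{y},v)=\exp(\sum_i[y_iL_i-\tfrac{s}{2}(L_i)^2])v$ and applies a vector-valued It\^{o} formula (Theorem~2.4 in Brze\'{z}niak, Van Neerven, Veraar \& Weis) to $s\mapsto\varphi(s,W_s,e^{tA}\xi)$ with the third argument \emph{frozen} at $e^{tA}\xi$; since $\varphi(s,W_s,e^{tA}\xi)=e^{(t-s)A}X_s$, this yields the mild-form identity at time $t$ and the $HS(\R^k,H_r)$-integrability in a single stroke, with no need to move an operator through a stochastic integral. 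You instead first derive the SDE for $Z_t\xi$ alone via the scalar It\^{o} formula tested against a countable dense set, then conjugate by $e^{tA}$; this avoids invoking an infinite-dimensional It\^{o} formula but costs you the weak-to-strong upgrade and the (standard, but to be cited) commutation of the bounded operator $e^{tA}$ with the stochastic integral. Both routes are sound. Two small points to tighten: since only $\xi\in\mathcal{M}(\mathcal{F}_0,\mathcal{B}(H_\delta))$ is assumed and $\delta$ may be negative, your test functionals should be taken as $\langle u,\cdot\rangle_{H_\delta}$ for $u$ in a countable dense subset of $H_\delta$ (or a duality pairing), not $\langle u,\cdot\rangle_H$; and the assertion that $L_i$ commutes with $(\eta-A)^r$ and hence lies in $L(H_r)$ deserves the one-line argument via commutation with the resolvent $(\eta-A)^{-1}$, which follows from $L_i(D(A))\subseteq D(A)$ and $L_iAu=AL_iu$. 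Note also that, absent moment assumptions on $\xi$, all stochastic integrals here are the localized ones defined under $\P\big(\int_0^t\|\cdot\|^2\,\diffns s<\infty\big)=1$, so ``martingale'' should read ``local martingale''; this does not affect the argument.
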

\begin{proof}
Throughout this proof 
let $ r \in [ 0, \infty ) $ 
and 
let 
$
  \varphi
  \in \mathcal{C}\big( 
  [ 0, T ] \times \R^k \times H_{ r }
  ,
  H_{ r }
  \big)
$
be the mapping with the property that for all 
$ t \in [0,T] $, 
$ \mathbf{y} = ( y_1, y_2, \ldots, y_k ) \in \R^k $, 
$ v \in H_{ r } $ 
it holds that 
$
  \varphi( t, \mathbf{y}, v )
  =
  \exp\!\big(
    {\sum^k_{ i=1 }}
    [
    y_i L_i
    -\tfrac{1}{2} t (L_i)^2
  ]\big)
  v
$. 
Note that the assumption that $ W $ has continuous sample paths ensures that $ X $ also has continuous sample paths.
Next observe that 
$
  \varphi \in \mathcal{C}^2\big( 
  [0,T] \times \R^k \times H_{ r }, H_{ r } \big)
$. 
It\^{o}'s formula 
(cf., e.g., Theorem~2.4 in Brze\'{z}niak, Van Neerven, Veraar \& Weis~\cite{bvvw08}) 
therefore implies that for all $ t \in (0,T] $ 
it holds $\P$-a.s.\ that 
\begin{equation}
\label{eq:positive.integrability}
\begin{split}
&
  \int^t_0
  \big\|
    \big(
    \tfrac{\partial}{ \partial \mathbf{y} }
    \varphi
    \big)( s, W_s, e^{ tA } \xi )
  \big\|^2_{ HS( \R^k, H_{ r } ) }
  \,\diffns{s}
=
  \int^t_0
  \smallsum^k_{ i=1 }
  \big\|
    \big(
    \tfrac{\partial}{\partial y_i}
    \varphi
    \big)
    ( s, W_s, e^{ t A } \xi )
  \big\|^2_{ H_r }
  \,\diffns{s}
\\&=
  \int^t_0
  \smallsum^k_{ i=1 }
  \big\|
  e^{ (t-s)A }
  \big(
    \tfrac{\partial}{\partial y_i}
    \varphi
  \big)
  ( s, W_s, e^{ s A } \xi )
  \big\|^2_{ H_r }
  \,\diffns{s}
=
  \int^t_0
  \big\|
    e^{ (t-s)A }
    B(X_s)
  \|^2_{ HS( \R^k, H_{ r } ) }
  \,\diffns{s}
  < \infty
\end{split}
\end{equation}
and 
\begin{equation}
\label{eq:positive.solution}
\begin{split}
  X_t&=
  \varphi( t, W_t, e^{ tA } \xi )
  =
  \varphi( 0, 0, e^{ tA } \xi )
  +
  \int^t_0
    \big(
    \tfrac{\partial}{ \partial s }
    \varphi
    \big)( u, W_u, e^{ tA } \xi )
  \,\diffns{u}
\\&\quad+
  \int^t_0
    \big(
    \tfrac{\partial}{ \partial \mathbf{y} }
    \varphi
    \big)( s, W_s, e^{ tA } \xi )
  \,\diffns{ W_s }
+
  \frac{1}{2}
  \sum^k_{ i=1 }
  \int^t_0
    \big(
    \tfrac{\partial^2}{ \partial y^2_i }
    \varphi
    \big)( s, W_s, e^{ tA } \xi )
  \,\diffns{ s }
\\&=
  e^{ tA } \xi
  -
  \int^t_0
  \frac{1}{2}
  \sum^k_{ i=1 }
  (L_i)^2
  \varphi( s, W_s, e^{ tA } \xi )
  \,\diffns{s}
  +
  \int^t_0
  e^{ (t-s)A }
  B( X_s )
  \,\diffns{W_s}
\\&\quad+
  \frac{1}{2}
  \sum^k_{ i=1 }
  \int^t_0
  (L_i)^2
  \varphi( s, W_s, e^{ tA } \xi )
  \,\diffns{s}
  =
  e^{ tA } \xi
  +
  \int^t_0
  e^{ (t-s)A }
  B( X_s )
  \,\diffns{W_s}
  .
\end{split}
\end{equation}
Combining this and~\eqref{eq:positive.integrability} completes the proof of Proposition~\ref{prop:positive.example}.
\end{proof}

\subsection{Stochastic heat equations with nonlinear multiplicative noise}
\label{sec:counterexample.II}

\begin{prop}
\label{prop:counterexampleII}
Assume the setting in Subsection~\ref{sec:setting.examples}, 
let 
$ r, \beta \in [0,\infty) $,   
$ w \in H_{-\beta} \setminus \{0\} $, 
$
  \xi \in \mathcal{M}\big( \mathcal{F}_0, \mathcal{B}( H_\delta ) \big)
$,
$
  B \in \mathcal{C}( H , HS( \R, H_{-\beta} ) )
$
satisfy for all 
$
  v \in H
$,
$ u \in \R $
that
$
  B( v ) u = u \left\| v \right\|_H w
$,
let
$
  W \colon [ 0, T ] \times \Omega \to \R
$ 
be a standard $ ( \mathcal{F}_t )_{ t \in [0,T] } $-Brownian motion,
and let 
$
  X \colon [0,T] \times \Omega \to H_{ \delta }
$
be an $ ( \mathcal{F}_t )_{ t \in [0,T] } $-predictable stochastic 
process 
which fulfills for all $ t \in (0,T] $
that
$
  X( (0,T] \times \Omega ) \subseteq H
$,
that
$
  \E\big[ 
    \| e^{ t A } \xi \|^2_{ H_{-r} } 
  \big] 
  +
  \int_0^t 
  \E\big[ 
    \| e^{ ( t - s ) A } B( X_s ) \|^2_{ HS( \R, H_{-r} ) } 
  \big] 
  \, ds
  < \infty
$,
and $ \P $-a.s.\ that
$
  X_t = e^{ t A } \xi + \int_0^t e^{ ( t - s ) A } B( X_s ) \, dW_s
$.
Then for all $ t \in (0,T] $ it holds that 
$
  \P\!\left(
  \xi \in H_{ - 1 / 2 }
  \right)
  =
  1
$ 
and 
\begin{equation}
  2^{-1/2}\,
  e^{ -|\eta| t }\, 
    \big(
      1 - e^{ - 2 [ \eta - \sup( \sigma_p(A) ) ] t }
    \big)^{ 1/2 }
    \,
    \|
      e^{ t A } w
    \|_{ H_{-r} }\,
  \|
    \xi
  \|_{ \lpn{2}{\P}{H_{ - 1 / 2 }} }
\leq 
  \|
    X_t
  \|_{ \lpn{2}{\P}{H_{ - r }} }
< 
  \infty
  .
\end{equation}
\end{prop}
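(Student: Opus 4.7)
The plan is to follow the argument pattern of Proposition~\ref{prop:counterexampleI}, adapted to the noise coefficient $B(v)u = u\|v\|_H w$. Starting from the mild equation and expanding the squared $H_{-r}$-norm, I would take expectation, use the $\mathcal{F}_0$-measurability of $\xi$ together with the martingale property of the stochastic integral to kill the cross term, and apply It\^{o}'s isometry to obtain
\[
  \E\!\left[\|X_t\|^2_{H_{-r}}\right]
  = \E\!\left[\|e^{tA}\xi\|^2_{H_{-r}}\right]
  + \int_0^t \|e^{(t-s)A}w\|^2_{H_{-r}}\,\E\!\left[\|X_s\|^2_H\right]\diffns{s},
\]
using the factorisation $\|e^{(t-s)A}B(X_s)\|^2_{HS(\R,H_{-r})}=\|X_s\|_H^2\|e^{(t-s)A}w\|^2_{H_{-r}}$ that is immediate from the definition of $B$. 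Both sides are finite by hypothesis, which already yields $\|X_t\|_{L^2(\P;H_{-r})}<\infty$.

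To bound $\E[\|X_s\|_H^2]$ from below I would iterate the idea: writing the mild equation at time $s$, squaring in $H$, conditioning on $\mathcal{F}_0$, and invoking the zero conditional mean of the stochastic integral delivers $\E[\|X_s\|_H^2]\geq\E[\|e^{sA}\xi\|_H^2]$. Substituting this back and invoking the spectral representation of the diagonal operator $A$ through its eigensystem $(\lambda_n,b_n)_n$ (with $\lambda_n\leq\sup\sigma_p(A)<\eta$, the strict inequality being a consequence of $\sigma(A)\subseteq(-\infty,\eta)$ for a diagonal operator), I would observe mode-wise that $e^{2(t-s)\lambda_n}=e^{2t\lambda_n}e^{-2s\lambda_n}\geq e^{-2s\eta}e^{2t\lambda_n}$, hence
\[
  \|e^{(t-s)A}w\|^2_{H_{-r}} \;\geq\; e^{-2s\eta}\|e^{tA}w\|^2_{H_{-r}}.
\]
Then I would evaluate $\int_0^t e^{-2s(\eta-\lambda_n)}\diffns{s} = \tfrac{1-e^{-2t(\eta-\lambda_n)}}{2(\eta-\lambda_n)}$ and use the monotonicity of $x\mapsto 1 - e^{-x}$ together with $\eta-\lambda_n\geq\eta-\sup\sigma_p(A)$ to collapse the sum over modes into $\tfrac12\bigl(1-e^{-2t[\eta-\sup\sigma_p(A)]}\bigr)\E[\|\xi\|^2_{H_{-1/2}}]$. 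Combining the estimates yields a bound even stronger than the stated one, which the weakening $e^{-|\eta|t}\leq 1$ recovers. The finiteness of the left-hand side then forces $\E[\|\xi\|^2_{H_{-1/2}}]<\infty$, whence $\P(\xi\in H_{-1/2})=1$ (note that $\|e^{tA}w\|_{H_{-r}}>0$ because $w\neq 0$ in $H_{-\beta}$ and the analytic semigroup $e^{tA}$ is injective on $H_{-\beta}$).

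The main technical subtlety, in my view, lies in the iterated step $\E[\|X_s\|_H^2]\geq\E[\|e^{sA}\xi\|_H^2]$, since the only stochastic-integrability assumption is in the coarser space $H_{-r}$ whereas a conditional Pythagoras in $\mathcal{L}^2(\P;H)$ is required. I would address this by first showing, from $\E[\|e^{tA}\xi\|^2_{H_{-r}}]<\infty$ and the spectral representation, that $e^{sA}\xi\in\mathcal{L}^2(\P;H)$ for every $s\in(0,t]$ (the factor $(\eta-\lambda_n)^{2r}e^{2(s-t)\lambda_n}$ is uniformly bounded in $n$ for $\lambda_n<\eta$ and $s\leq t$), and then splitting on whether $\E[\|X_s\|_H^2]$ is finite: the case $\E[\|X_s\|_H^2]=\infty$ is trivial, and in the complementary case $X_s-e^{sA}\xi$ is a genuine $\mathcal{L}^2(\P;H)$-valued $\mathcal{F}_0$-conditional martingale difference on which the Pythagorean identity applies.
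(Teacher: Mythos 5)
Your proposal is correct and follows the same overall route as the paper's proof: expand the squared $H_{-r}$-norm of the mild solution, remove the cross term by conditioning on $\mathcal{F}_0$, apply It\^{o}'s isometry, substitute the mild formula once more into the integrand (here exploiting the factorisation $\|e^{(t-s)A}B(X_s)\|^2_{HS(\R,H_{-r})}=\|X_s\|_H^2\,\|e^{(t-s)A}w\|^2_{H_{-r}}$), drop the nonnegative square of the inner stochastic integral, and finish with the spectral computation that produces $\|\xi\|_{L^2(\P;H_{-1/2})}$ and the factor $1-e^{-2[\eta-\sup(\sigma_p(A))]t}$. The one point where you genuinely diverge is the step you yourself single out: why the cross term $\E[\langle e^{sA}\xi,\int_0^s e^{(s-u)A}B(X_u)\,\diffns{W_u}\rangle_H]$ vanishes when the stochastic integral is only assumed square-integrable in $H_{-r}$. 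The paper re-pairs the inner product as $\langle(\eta-A)^r e^{sA}\xi,\,(\eta-A)^{-r}\int_0^s e^{(s-u)A}B(X_u)\,\diffns{W_u}\rangle_H$, making the first factor $L^2(\P;H)$ by parabolic smoothing ($\E[\|e^{sA}\xi\|^2_{H_r}]\le\|e^{(s/2)A}\|^2_{L(H_{-r},H_r)}\,\E[\|e^{(s/2)A}\xi\|^2_{H_{-r}}]<\infty$) and the second $L^2(\P;H)$ by hypothesis, after which the zero conditional mean applies; your dichotomy on the finiteness of $\E[\|X_s\|_H^2]$ achieves the same end (the infinite branch is harmless and, by the standing integrability hypothesis and injectivity of $e^{(t-s)A}$, can occur only for a null set of times $s$), and both devices are valid. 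One small slip to fix: the quantity you claim is uniformly bounded when showing $e^{sA}\xi\in\mathcal{L}^2(\P;H)$ should be $(\eta-\lambda_n)^{2r}e^{2(s-s')\lambda_n}$ for a strictly earlier time $s'<s$ (e.g.\ estimate $\|e^{sA}\xi\|_H\le\|e^{(s/2)A}\|_{L(H_{-r},H)}\|e^{(s/2)A}\xi\|_{H_{-r}}$); as written, $(\eta-\lambda_n)^{2r}e^{2(s-t)\lambda_n}$ with $s\le t$ is unbounded along eigenvalues $\lambda_n\to-\infty$. This does not affect the rest of the argument, whose final assembly indeed yields a bound slightly stronger than the stated one, recovered by $e^{-|\eta|t}\le 1$.
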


\begin{proof}
Throughout this proof 
let $ \mathbb{B} \subseteq H $ 
be an orthonormal basis of $ H $ 
and let 
$ \lambda \colon \mathbb{B} \to \R $ 
be a mapping 
which satisfies 
$ \sup_{ b \in \mathbb{B} } -\lambda_b < \eta $, 
which satisfies 
$
  D(A)
  =
  \{
    v \in H
    \colon 
    \sum_{ b \in \mathbb{B} }
    |\lambda_b\left< b, v \right>_H|^2
    < \infty
  \}
$, 
and which satisfies for all 
$ v \in D(A) $ 
that 
$
  Av
  =
    -
    \sum_{ b \in \mathbb{B} }
    \lambda_b
    \left< b, v \right>_H
    b  
$.
Note that 
for all 
$ t \in (0,T] $
it holds $ \P $-a.s.\ that
\begin{equation}
\label{eq:squared.solutionII}
\begin{split}
  \left\| X_t \right\|_{ H_{-r} }^2
=&
  \left\| e^{ t A } \xi \right\|^2_{ H_{-r} }
  +
  2
  \left<
    e^{ t A } \xi
    ,
    \int_0^t
    e^{ ( t - s ) A } B( X_s ) \,
    dW_s
  \right>_{ H_{-r} }
\\&+
  \left\|
    \int_0^t
    e^{ ( t - s ) A } B( X_s ) \, dW_s
  \right\|^2_{ H_{-r} }
  .
\end{split}
\end{equation}
Equation~\eqref{eq:squared.solutionII}
together with It\^{o}'s isometry and 
the assumption that 
$ 
  \forall \, t \in [0,T] \colon
  \E\big[ 
    \| e^{ t A } \xi \|^2_{ H_{-r} } 
  \big] 
  +
  \int_0^t 
    \E\big[ 
      \| e^{ ( t - s ) A } B( X_s ) \|^2_{ HS( \R, H_{-r} ) } 
    \big] 
  \, ds
$
$
  < \infty
$
hence prove that
for all $ t \in (0,T] $
it holds that
\begin{equation}
\label{eq:counterexampleII_last_step}
\begin{split}
&
  \E\!\left[
    \left\| X_t \right\|_{ H_{-r} }^2
  \right]
=
  \E\!\left[ \| e^{ t A } \xi \|^2_{H_{-r}} \right]
  +
  2\,
  \E\!\left[
  \left<
    e^{ t A } \xi
    ,
      \int_0^t
      e^{ ( t - s ) A } B( X_s ) \,
      dW_s
  \right>_{ \! H_{-r} }
  \right]
\\&\quad+
  \E\!\left[
  \left\|
    \int_0^t
    e^{ ( t - s ) A } B( X_s ) \, dW_s
  \right\|^2_{ H_{-r} }
  \right]
\\ & =
  \E\!\left[ \| e^{ t A } \xi \|^2_{H_{-r}} \right]
  +
  2\,
  \E\!\left[\left.
  \left<
    e^{ t A } \xi
    ,
    \E\!\left[
      \int_0^t
      e^{ ( t - s ) A } B( X_s ) \,
      dW_s
    \right|\mathcal{F}_0
    \right]
  \right>_{ \! H_{-r} }
  \right]
\\&\quad+
  \int_0^t
  \E\!\left[
    \left\|
      e^{ ( t - s ) A } B( X_s ) 
    \right\|^2_{ HS( \R, H_{-r} ) }
  \right]
  ds
\\ & =
  \E\!\left[ \| e^{ t A } \xi \|^2_{H_{-r}} \right]
  +
  \int_0^t
  \E\!\left[
    \left\|
      e^{ ( t - s ) A } B( X_s ) 
    \right\|^2_{ HS( \R, H_{-r} ) }
  \right]
  ds
  < \infty
  .
\end{split}
\end{equation}
Next we note that for all $ t \in (0,T] $, $ s \in (0,t) $
it holds $ \P $-a.s.\ that
\begin{equation}
\label{eq:counterexample_2_B}
\begin{split}
&
    \left\|
      e^{ ( t - s ) A } B( X_s ) 
    \right\|^2_{ HS( \R, H_{-r} ) }
= 
    \left\|
      e^{ ( t - s ) A } 
      B\!\left( 
        e^{ s A } \xi + \int_0^s e^{ ( s - u ) A } B( X_u ) \, dW_u 
      \right) 
    \right\|^2_{ HS( \R, H_{-r} ) }
\\&= 
      \left\|
        e^{ s A } \xi + \int_0^s e^{ ( s - u ) A } B( X_u ) \, dW_u 
      \right\|^2_H
    \left\| 
     e^{ ( t - s ) A } w 
    \right\|^2_{ H_{-r} }
\\ & =
    \left\|
      e^{ ( t - s ) A } 
      B\!\left( 
        e^{ s A } \xi 
      \right) 
    \right\|^2_{ HS( \R, H_{-r} ) }
  +
    \left\|
      e^{ ( t - s ) A } 
      B\!\left( 
        \int_0^s e^{ ( s - u ) A } B( X_u ) \, dW_u 
      \right) 
    \right\|^2_{ HS( \R, H_{-r} ) }
\\ &
  \quad+
    2
    \left\|
      e^{ ( t - s ) A } w
    \right\|^2_{ H_{-r} }    
    \left<
        e^{ s A } \xi 
      , 
        \int_0^s e^{ ( s - u ) A } B( X_u ) \, dW_u 
    \right>_{ \!H }
\\ & 
  \geq
    \left\|
      e^{ ( t - s ) A } 
      B\!\left( 
        e^{ s A } \xi 
      \right) 
    \right\|^2_{ HS( \R, H_{-r} ) }
+
    2
    \left\|
      e^{ ( t - s ) A } w
    \right\|^2_{ H_{-r} }    
    \left<
        e^{ s A } \xi 
      , 
        \int_0^s e^{ ( s - u ) A } B( X_u ) \, dW_u 
    \right>_{ \!H }
\\ & 
  =
    \left\|
      e^{ ( t - s ) A } 
      B\!\left( 
        e^{ s A } \xi 
      \right) 
    \right\|^2_{ HS( \R, H_{-r} ) }
\\&\quad+
    2
    \left\|
      e^{ ( t - s ) A } w
    \right\|^2_{ H_{-r} }    
    \left<
        (\eta-A)^r e^{ s A } \xi 
      , 
        \int_0^s (\eta-A)^{-r} e^{ ( s - u ) A } B( X_u ) \, dW_u 
    \right>_{ \!H }
    .
\end{split}
\end{equation}
In addition, the assumption that 
$
  \forall\, t \in (0,T]
  \colon
  \E\big[
  \| e^{ tA } \xi \|^2_{ H_{-r} }
  \big]
  < \infty
$ 
implies that for all $ t \in (0,T] $ 
it holds that 
\begin{equation}
  \E\big[
  \| e^{ tA } \xi \|^2_{ H_r }
  \big]
  \leq
  \| e^{ \frac{t}{2} A } \|^2_{ L( H_{-r}, H_r ) }
  \,
  \E\big[
  \| e^{ \frac{t}{2} A } \xi \|^2_{ H_{-r} }
  \big]  
  < \infty
  .
\end{equation}
It\^{o}'s isometry and the assumption that
$ 
  \forall \, t \in (0,T] \colon
  \int_0^t 
    \E\big[ 
      \| e^{ ( t - s ) A } B( X_s ) \|^2_{ HS( \R, H_{-r} ) } 
    \big] 
  \, ds
  < \infty
$
hence prove that for all $ t \in (0,T] $, $ s \in (0,t) $
it holds that
\begin{equation}
\label{eq:nonlinear.diffusion.lb}
\begin{split}
& 
  \E\!\left[
    \left\|
      e^{ ( t - s ) A } B( X_s ) 
    \right\|^2_{ HS( \R, H_{-r} ) }
  \right]
\geq
    \E\!\left[
    \left\|
      e^{ ( t - s ) A } 
      B\!\left( 
        e^{ s A } \xi 
      \right) 
    \right\|^2_{ HS( \R, H_{-r} ) }
    \right]
\\ & \quad +
    2
    \left\|
      e^{ ( t - s ) A } w
    \right\|^2_{ H_{-r} }    
    \E\!\left[
    \left<
      (\eta-A)^r e^{ s A } \xi 
      ,
      \int_0^s 
        (\eta-A)^{-r} e^{ ( s - u ) A } B( X_u ) \, dW_u 
    \right>_{ \!H }
    \right]
\\&=
    \E\!\left[
    \left\|
      e^{ ( t - s ) A } 
      B\!\left( 
        e^{ s A } \xi 
      \right) 
    \right\|^2_{ HS( \R, H_{-r} ) }
    \right]
\\ & \quad +
    2
    \left\|
      e^{ ( t - s ) A } w
    \right\|^2_{ H_{-r} }    
    \E\!\left[
    \left<
      (\eta-A)^r e^{ s A } \xi 
      ,
      \E\!\left[\left.
      \int_0^s  
        (\eta-A)^{-r} e^{ ( s - u ) A } B( X_u ) \, dW_u 
      \right|
      \mathcal{F}_0
      \right]
    \right>_{ \!H }
    \right]
\\ & 
=
    \E\!\left[
    \left\|
      e^{ ( t - s ) A } 
      B\!\left( 
        e^{ s A } \xi 
      \right) 
    \right\|^2_{ HS( \R, H_{-r} ) }
    \right]
    .
\end{split}
\end{equation}
Furthermore, we observe that for all 
$ t \in (0,T] $, 
$ s \in (0,t) $
it holds that 
\begin{equation}
\label{eq:smooth.lb}
  \| e^{tA} w \|_{ H_{-r} }
  \leq
  \| e^{sA} \|_{ L(H) }\,
  \| e^{ (t-s)A } w \|_{ H_{-r} }
  \leq
  e^{ \eta s }\,
  \| e^{ (t-s)A } w \|_{ H_{-r} }
  \leq
  e^{ \max\{\eta,0\} t }\,
  \| e^{ (t-s)A } w \|_{ H_{-r} }
  .
\end{equation}
Combining~\eqref{eq:nonlinear.diffusion.lb} with~\eqref{eq:counterexampleII_last_step} and~\eqref{eq:smooth.lb}
ensures that
for all $ t \in (0,T] $
it holds that
\begin{equation}
\label{eq:counterexampleII_xi_estimate}
\begin{split}
&
  \infty
  >
  \E\!\left[
    \left\| X_t \right\|_{ H_{-r} }^2
  \right]
\geq
  \E\!\left[
  \| e^{ t A } \xi \|^2_{ H_{-r} }
  \right]
  +
  \int_0^t
    \E\!\left[
    \left\|
      e^{ ( t - s ) A } 
      B\!\left( 
        e^{ s A } \xi 
      \right) 
    \right\|^2_{ HS( \R, H_{-r} ) }
    \right]
  ds
\\ & \geq
  \int_0^t
    \E\!\left[
    \left\|
      e^{ ( t - s ) A } 
      B\!\left( 
        e^{ s A } \xi 
      \right) 
    \right\|^2_{ HS( \R, H_{-r} ) }
    \right]
  ds
=
  \int_0^t
    \left\|
      e^{ (t-s) A } w
    \right\|^2_{ H_{-r} }\,
    \E\!\left[
    \|
      e^{ s A } \xi 
    \|^2_H 
    \right]
  \, ds
\\&\geq
    e^{ -2 \max\{\eta,0\} t }
    \left\|
      e^{ t A } w
    \right\|^2_{ H_{-r} }
  \int_0^t
    \E\!\left[
    \|
      e^{ s A } \xi 
    \|^2_H 
    \right]
  \, ds
\\&=
    e^{ -2 \max\{\eta,0\} t }
    \left\|
      e^{ t A } w
    \right\|^2_{ H_{-r} }
  \sum_{ b \in \mathbb{B} }
  \int_0^t
    \E\!\left[
    \left|
      \left< 
        e^{ s A }
        b ,
        \xi 
      \right>_H
    \right|^2
    \right]
  ds
\\&=
    e^{ -2 \max\{\eta,0\} t }
    \left\|
      e^{ t A } w
    \right\|^2_{ H_{-r} }
  \sum_{ b \in \mathbb{B} }
  \int_0^t
    e^{
      - 2 ( \lambda_b + \eta ) s
    }
    \,
    e^{ 2 \eta s }
    \,
    \E\!\left[
    \left|
      \left< 
        b ,
        \xi 
      \right>_H
    \right|^2
    \right]
  ds
\\&\geq
    e^{ -2 |\eta| t }
    \left\|
      e^{ t A } w
    \right\|^2_{ H_{-r} }
  \sum_{ b \in \mathbb{B} }
  \int_0^t
    e^{
      - 2 ( \lambda_b + \eta ) s
    }
    \,
    \E\!\left[
    \left|
      \left< 
        b ,
        \xi 
      \right>_H
    \right|^2
    \right]
  ds
\\&=
    e^{ -2 |\eta| t }
    \left\|
      e^{ t A } w
    \right\|^2_{ H_{-r} }
  \sum_{ b \in \mathbb{B} }
  \frac{
    (
      1 - e^{ - 2 ( \lambda_b + \eta ) t }
    )
    \E\!\left[
    \left|
      \left< 
        b ,
        \xi 
      \right>_H
    \right|^2
    \right]
  }{
    2 \, ( \lambda_b + \eta )
  }
\\&=
  \frac{
   \|
      e^{ t A } w
    \|^2_{ H_{-r} }
  }{ 2 e^{ 2 |\eta| t } }
  \sum_{ b \in \mathbb{B} }
    \big(
      1 - e^{ - 2 ( \lambda_b + \eta ) t }
    \big)\,
    \E\!\left[
    \left|
      \left< 
        ( \eta - A )^{ - 1 / 2 } b ,
        \xi 
      \right>_H
    \right|^2
    \right]
\\&\geq
  \frac{ 
    \big(
      1 - e^{ - 2 ( \inf_{ b \in \mathbb{B} } \lambda_b + \eta ) t }
    \big)
    \|
      e^{ t A } w
    \|^2_{ H_{-r} }
  }{ 2 e^{ 2 |\eta| t } }
  \sum_{ b \in \mathbb{B} }
    \E\!\left[
    \left|
      \left< 
        ( \eta - A )^{ -1 / 2 } b ,
        \xi 
      \right>_H
    \right|^2
    \right]
  .
\end{split}
\end{equation}
This and the assumption that 
$ w \neq 0 $, 
in particular, 
assure that 
$
    \E\big[
  \sum_{ b \in \mathbb{B} }
    \left|
      \left< 
        ( \eta - A )^{ -1 / 2 } b ,
        \xi 
      \right>_H
    \right|^2
    \big]
  < \infty
$.
Hence, we obtain that
$
  \P\!\left(
  \xi \in H_{ - 1 / 2 }
  \right)
  =1
$.
This and \eqref{eq:counterexampleII_xi_estimate} complete 
the proof of Proposition~\ref{prop:counterexampleII}.
\end{proof}

\subsection{Nonlinear heat equations}
\label{sec:counterexample.III}

\begin{prop}
\label{prop:counterexample2}
Let 
$
  \left(
    H,
    \left\| \cdot \right\|_H,
    \left< \cdot, \cdot \right>_H
  \right)
$ 
be a separable $\R$-Hilbert space 
with $ \#_H(H) > 1 $, 
let $ \mathbb{B} \subseteq H $ be an orthonormal basis of $H$, 
let 
$
  ( \Omega, \mathcal{F}, \P )
$ 
be a probability space, 
let 
$ T \in (0,\infty) $, 
$ \eta, \delta \in \R $,  
let 
$ \lambda \colon \mathbb{B} \to \R $ 
be a function which satisfies 
$ \sup_{ b \in \mathbb{B} } (-\lambda_b) < \eta $, 
let $ A \colon D(A) \subseteq H \rightarrow H $ 
be a linear operator  
which satisfies 
$
  D( A ) = \left\{
    v \in H \colon
    \sum_{ b \in \mathbb{B} }
    \left|
      \lambda_b\left< b , v \right>_H
    \right|^2
    < \infty
  \right\}
$ 
and which satisfies 
for all $ v \in D(A) $
that
$
  A v
  =
  -
  \sum_{ b\in \mathbb{B} }
  \lambda_b
  \left< b, v \right>_H
  b
$,
let
$
  (
    H_r
    ,
    \left\| \cdot \right\|_{ H_r }
    ,
    \left< \cdot , \cdot \right>_{ H_r }
  )
$,
$ r \in \R $,
be a family of interpolation spaces associated to
$
  \eta - A
$,
let
$ w \in H $, 
$ b_0 \in \mathbb{B} $, 
$ \xi \in \mathcal{M}\big( \mathcal{F}, \mathcal{B}(H_\delta) \big) $, 
$ F \in \mathcal{C}(H,H) $  
satisfy for all 
$ v \in H $ 
that 
$ \left< b_0, w \right>_H > 0 $, 
$ w = \left< b_0, w \right>_H b_0 $, 
and  
$
  F( v ) = \left\| v \right\|_H w
$, 
and let
$ X \in \mathcal{M}\big( \mathcal{B}([0,T]) \otimes \mathcal{F}, \mathcal{B}(H_\delta) \big) $
satisfy for all $ t \in (0,T] $ that 
$ X( (0,T] \times \Omega ) \subseteq H $,   
$\P$-a.s.\ that 
$
  \int_0^t
  \|
    e^{ ( t - s ) A }
    F( X_s )
  \|_{ H_{ \delta } }
  \,
  ds
  < \infty
$,  
and $\P$-a.s.\ that 
$
  X_t
  =
  e^{ t A } \xi
  +
  \int_0^t
  e^{  ( t - s ) A }
  F( X_s ) \, ds
$.
Then for all $ t \in (0,T] $ it holds that
$
  \P\!\left(
  \xi \in H_{ - 1 }
  \right)
  =
  1
$
and $\P$-a.s.\ that  
\begin{equation}
\begin{split}
&
  \left< b_0, w \right>_H
  e^{ -(\lambda_{b_0}+|\eta|) t }
        \left[
          1
          -
          e^{ -( \inf_{ b \in \mathbb{B} } \lambda_b +\eta) t }
        \right]
  \left\|
    \xi - \left< b_0, \xi \right>_H b_0
  \right\|_{ H_{ - 1 } }
\\&\leq
  \left< b_0, X_t - e^{  t A } \xi \right>_H 
\leq
  \left\| X_t - e^{  t A } \xi \right\|_H
  < \infty
  .
\end{split}
\end{equation}
\end{prop}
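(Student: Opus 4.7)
The plan is to exploit the geometric degeneracy of the problem: the range of $F$ lies in $\operatorname{span}(b_0)$, and since $b_0$ is an eigenvector of $A$, the entire correction $X_t - e^{tA}\xi$ must be a nonnegative multiple of $b_0$, while the $b_0^\perp$-component of the solution remains equal to the free evolution $e^{sA} P^\perp \xi$ of the initial condition. First I would write $c_0 := \langle b_0, w\rangle_H > 0$ and substitute $e^{(t-s)A} w = c_0 e^{-\lambda_{b_0}(t-s)} b_0$ into the mild formulation to obtain, $\P$-a.s.\ for every $t \in (0,T]$,
\begin{equation*}
X_t - e^{tA}\xi \;=\; c_0\,b_0 \int_0^t e^{-\lambda_{b_0}(t-s)} \|X_s\|_H \, ds,
\end{equation*}
with the scalar integral being finite a.s.\ as a consequence of the assumption $\int_0^t\|e^{(t-s)A}F(X_s)\|_{H_\delta}\,ds<\infty$ and $\|b_0\|_{H_\delta} = (\eta+\lambda_{b_0})^\delta$. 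Taking the inner product with $b_0$ immediately yields the identity $\langle b_0, X_t - e^{tA}\xi\rangle_H = c_0 \int_0^t e^{-\lambda_{b_0}(t-s)} \|X_s\|_H\,ds$ and the trivial upper bound $\langle b_0, X_t-e^{tA}\xi\rangle_H \leq \|X_t - e^{tA}\xi\|_H < \infty$.

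Let $P^\perp$ denote the orthogonal projection in $H$ onto $\overline{\operatorname{span}}(\mathbb{B}\setminus\{b_0\})$; since $b_0$ is an eigenvector of $A$, this subspace is $e^{sA}$-invariant, and hence $P^\perp$ commutes with $e^{sA}$. Because $X_s - e^{sA}\xi \in \operatorname{span}(b_0)$ for $s\in(0,T]$, I would lower-bound $\|X_s\|_H \geq \|P^\perp X_s\|_H = \|P^\perp e^{sA}\xi\|_H = \|e^{sA} P^\perp \xi\|_H$. The remaining task reduces to estimating $\int_0^t e^{-\lambda_{b_0}(t-s)}\|e^{sA} P^\perp \xi\|_H\,ds$ from below by a multiple of $\|P^\perp\xi\|_{H_{-1}}$. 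The crucial step is Minkowski's integral inequality applied to the kernel $f(s,b) := e^{-\lambda_{b_0}(t-s)-\lambda_b s} |\langle b,\xi\rangle_H|\,\mathbbm{1}_{\{b\neq b_0\}}$ on $[0,t]\times\mathbb{B}$ with the $\ell^2(\mathbb{B})$-norm placed inside the $s$-integral, producing
\begin{equation*}
\int_0^t e^{-\lambda_{b_0}(t-s)}\|e^{sA} P^\perp \xi\|_H\,ds \;\geq\; \bigg(\sum_{b\neq b_0} r_b(t)^2\,|\langle b,\xi\rangle_H|^2\bigg)^{\!1/2}, \qquad r_b(t) := \int_0^t e^{-\lambda_{b_0}(t-s)-\lambda_b s}\,ds.
\end{equation*}

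It then remains to give a pointwise lower bound on $r_b(t)$. Setting $\mu_b := \lambda_b+\eta > 0$ and $\nu := \lambda_{b_0}+\eta > 0$, the identity $r_b(t) = e^{\eta t}\int_0^t e^{-\nu(t-s)-\mu_b s}\,ds$ together with the trivial lower bound $e^{-\nu(t-s)} \geq e^{-\nu t}$ and the monotonicity of $\mu\mapsto 1-e^{-\mu t}$ yields
\begin{equation*}
r_b(t)\,(\lambda_b+\eta) \;\geq\; e^{-\lambda_{b_0} t}\bigl(1-e^{-(\inf_{b'}\lambda_{b'}+\eta)t}\bigr) \;\geq\; e^{-(\lambda_{b_0}+|\eta|)t}\bigl(1-e^{-(\inf_{b'}\lambda_{b'}+\eta)t}\bigr).
\end{equation*}
Squaring this estimate, substituting into the Minkowski bound, and recognising that $\|P^\perp\xi\|_{H_{-1}}^2 = \sum_{b\neq b_0}(\lambda_b+\eta)^{-2}|\langle b,\xi\rangle_H|^2$ produce the claimed lower bound; a.s.\ finiteness of the left-hand side then forces $\|P^\perp\xi\|_{H_{-1}}<\infty$ $\P$-a.s., and combined with the a.s.\ finiteness of $\langle b_0,\xi\rangle_H$ (immediate from $\xi \in H_\delta$) this gives $\P(\xi \in H_{-1}) = 1$. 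The only conceptual obstacle is identifying Minkowski's integral inequality as the correct tool for interchanging the $\ell^2$-sum and the $s$-integral in the favorable direction, since the more reflexive attempt via Cauchy--Schwarz points the wrong way; everything else reduces to an elementary eigenvalue calculation.
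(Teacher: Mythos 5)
Your proposal is correct and follows essentially the same route as the paper's proof: exploit that the drift correction lies in $\operatorname{span}(b_0)$, lower-bound $\|X_s\|_H$ by $\|e^{sA}P^{\perp}\xi\|_H$, apply Minkowski's integral inequality to move the $\ell^2$-sum outside the $s$-integral, and finish with the elementary eigenvalue computation yielding the factor $(\lambda_b+\eta)^{-1}(1-e^{-(\lambda_b+\eta)t})$. The only (cosmetic) difference is the order in which the exponential prefactors are estimated, and both arguments land on the same constant $e^{-(\lambda_{b_0}+|\eta|)t}\bigl(1-e^{-(\inf_b\lambda_b+\eta)t}\bigr)$.
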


\begin{proof}
Throughout this proof 
let $ P \in L( H_{ \min\{\delta,0\} } ) $
be the linear operator with the property that for all
$ v \in H $ it holds that
$
  P( v ) = v - \left< b_0, v \right>_H b_0
$.
We observe that the assumption that
$ X( (0,T] \times \Omega ) \subseteq H $ 
implies that for all $ t \in (0,T] $
it holds $\P$-a.s.\ that
\begin{equation}
\label{eq:project.lb}
\begin{split}
&
  \infty > \left\| X_t - e^{  t A } \xi \right\|_H
  =
  \left< b_0, X_t - e^{  t A } \xi \right>_H
  =
  \int_0^t
  \left< b_0 ,
  e^{  ( t - s ) A }
  F( X_s )
  \right>_H
  ds
 \\ &
  =
  \int_0^t
  \left< b_0 ,
  e^{  ( t - s ) A }
  w
  \right>_H
  \left\| X_s \right\|_H
  ds
  =
  \int_0^t
  \left< b_0, w \right>_H
  e^{ -(\lambda_{b_0}+\eta) \left( t - s \right) }\,
  e^{ \eta (t-s) }
  \left\| X_s \right\|_H
  ds
\\&\geq
  \left< b_0, w \right>_H
  e^{ \min\{\eta,0\} t }
  \int_0^t
  e^{ -(\lambda_{b_0}+\eta) \left( t - s \right) }
  \left\| P X_s \right\|_H
  ds
\\&=
  \left< b_0, w \right>_H
  e^{ \min\{\eta,0\} t }
  \int_0^t
  e^{ -(\lambda_{b_0}+\eta) \left( t - s \right) }
  \,
  \| e^{ s A } P \xi \|_H
  \, ds
\\&\geq
  \left< b_0, w \right>_H
  e^{ -(\lambda_{b_0}+\max\{\eta,0\}) t }
  \int_0^t
  \| e^{ s A } P \xi \|_H
  \, ds
\\&=
  \left< b_0, w \right>_H
  e^{ -(\lambda_{b_0}+\max\{\eta,0\}) t }
  \int_0^t
  \left[
    \textstyle
    \sum_{ b \in \mathbb{B} \setminus \{b_0\} }
    \displaystyle
    \left|
      e^{ -(\lambda_b+\eta) s }\,
      e^{ \eta s }
      \left< b, \xi \right>_H
    \right|^2
  \right]^{ \nicefrac{ 1 }{ 2 } }
  ds
  .
\end{split}
\end{equation}
This and the Minkowski integral inequality imply that for all $ t \in (0,T] $
it holds $\P$-a.s.\ that
\begin{equation}
\label{eq:minkowski}
\begin{split}
&
  \infty > \left\| X_t - e^{  t A } \xi \right\|_H
  \geq \left< b_0, X_t - e^{  t A } \xi \right>_H
\\&\geq
  \left< b_0, w \right>_H
  e^{ -(\lambda_{b_0}+|\eta|) t }
  \left[
    \textstyle
    \sum_{ b \in \mathbb{B} \setminus \{b_0\} }
    \displaystyle
    \left|
      \textstyle
      \int_0^t
      \displaystyle
      \left|
      e^{ -(\lambda_b+\eta) s }
      \left< b, \xi \right>_H
      \right|
      ds
    \right|^2
  \right]^{ \nicefrac{ 1 }{ 2 } }
\\&=
  \left< b_0, w \right>_H
  e^{ -(\lambda_{b_0}+|\eta|) t }
  \left[
    \textstyle
    \sum_{ b \in \mathbb{B} \setminus \{b_0\} }
    \displaystyle
      \tfrac{
        \left[
          1
          -
          e^{ -(\lambda_b+\eta) t }
        \right]^2
        \left|
          \left< b, \xi \right>_H
        \right|^2
      }{
        \left|
          \lambda_b + \eta
        \right|^2
      }
  \right]^{ \nicefrac{ 1 }{ 2 } }
\\ &
  \geq
  \left< b_0, w \right>_H
  e^{ -(\lambda_{b_0}+|\eta|) t }
        \left[
          1
          -
          e^{ -( \inf_{ b \in \mathbb{B} } \lambda_b +\eta) t }
        \right]
  \left[
    \smallsum_{ b \in \mathbb{B} \setminus \{b_0\} }
        \left|
          \left< (\eta-A)^{-1} b, \xi \right>_H
        \right|^2
  \right]^{ \nicefrac{ 1 }{ 2 } }
  .
\end{split}
\end{equation}
The assumption that 
$ \left< b_0, w \right>_H > 0 $ 
hence implies that it holds $\P$-a.s.\ that 
\begin{equation}
    \smallsum_{ b \in \mathbb{B} }
        \left|
          \left< (\eta-A)^{-1} b, \xi \right>_H
        \right|^2
        < \infty
        .
\end{equation}
This ensures that
$
  \P\!\left(
  \xi \in H_{ - 1 }
  \right)
$
$
  =
  1
$.
This together with~\eqref{eq:minkowski} completes the proof of Proposition~\ref{prop:counterexample2}.
\end{proof}

\section*{Acknowledgments}

Stig Larsson and Christoph Schwab are gratefully acknowledged for some useful comments.
This project has been supported through the SNSF-Research project 200021\_156603 
"Numerical approximations of nonlinear stochastic ordinary and partial differential equations".

\bibliographystyle{acm}
\bibliography{Bib/bibfile}
\end{document}